\documentclass[hidelinks,10pt,conference,letterpaper]{IEEEtran}

\IEEEoverridecommandlockouts

\addtolength{\topmargin}{.15in}

\usepackage{etex}

\usepackage{multicol}



\usepackage{comment}
\usepackage{siunitx}
\usepackage{relsize}
\usepackage{ifthen}

\usepackage[caption=false]{subfig}





\usepackage{graphics} 
\usepackage{rotating}
\usepackage{color}
\usepackage{enumerate}
\usepackage[T1]{fontenc}
\usepackage{psfrag}
\usepackage{epsfig} 
\usepackage{booktabs}
\usepackage{graphicx,url}
\usepackage{multirow}
\usepackage{array}
\usepackage{latexsym}
\usepackage{amsfonts}
\usepackage{amsmath}
\usepackage{amssymb}
\usepackage{xstring}
\usepackage{multirow}
\usepackage{xcolor}
\usepackage{prettyref}
\usepackage{flexisym}
\usepackage{bigdelim}
\usepackage{breqn} 
\usepackage{listings}

\usepackage{enumitem}
\usepackage{xspace}
\usepackage{bm}
\graphicspath{{./figures/}}
\usepackage{tikz}
\usetikzlibrary{matrix,calc}

\usepackage{lineno}


%

\usepackage{mdwlist}

\makecompactlist{itemize}{stditemize}




\newrefformat{prob}{Problem\,\ref{#1}}
\newrefformat{def}{Definition\,\ref{#1}}
\newrefformat{sec}{Section\,\ref{#1}}
\newrefformat{sub}{Section\,\ref{#1}}
\newrefformat{prop}{Proposition\,\ref{#1}}
\newrefformat{app}{Appendix\,\ref{#1}}
\newrefformat{alg}{Algorithm\,\ref{#1}}
\newrefformat{cor}{Corollary\,\ref{#1}}
\newrefformat{thm}{Theorem\,\ref{#1}}
\newrefformat{lem}{Lemma\,\ref{#1}}
\newrefformat{fig}{Fig.\,\ref{#1}}
\newrefformat{tab}{Table\,\ref{#1}}




\newcommand{\bdmath}{\begin{dmath}}
\newcommand{\edmath}{\end{dmath}}
\newcommand{\beq}{\begin{equation}}
\newcommand{\eeq}{\end{equation}}
\newcommand{\bdm}{\begin{displaymath}}
\newcommand{\edm}{\end{displaymath}}
\newcommand{\bea}{\begin{eqnarray}}
\newcommand{\eea}{\end{eqnarray}}
\newcommand{\beal}{\beq \begin{array}{lll}}
\newcommand{\eeal}{\end{array} \eeq}
\newcommand{\beas}{\begin{eqnarray*}}
\newcommand{\eeas}{\end{eqnarray*}}
\newcommand{\ba}{\begin{array}}
\newcommand{\ea}{\end{array}}
\newcommand{\bit}{\begin{itemize}}
\newcommand{\eit}{\end{itemize}}
\newcommand{\ben}{\begin{enumerate}}
\newcommand{\een}{\end{enumerate}}

\newcommand{\algS}{\widehat{\calS}}
\newcommand{\calA}{{\cal A}}

\newcommand{\calD}{{\cal D}}

\newcommand{\calS}{{\cal S}}

\newcommand{\calV}{{\cal V}}







\newcommand{\hide}[1]{}

\newcommand{\hiddenText}{{\color{gray} hidden text.}}
\newcommand{\hideWithText}[1]{\hiddenText}



\DeclareMathOperator*{\argmin}{arg\,min}


\newcommand{\tran}{^{\mathsf{T}}}

\newcommand{\diag}[1]{\mathrm{diag}\left(#1\right)}
\newcommand{\trace}[1]{\mathrm{tr}\left(#1\right)}

\newcommand{\inv}{^{-1}}

\newcommand{\until}[1]{\{1, 2\dots, #1\}}

\newcommand{\eye}{{\mathbf I}}

\newcommand{\Real}[1]{ { {\mathbb R}^{#1} } }

\newcommand{\att}{^{(t)}}
\newcommand{\at}[1]{^{(#1)}}


\newcommand{\round}[1]{\mathrm{round}\left( #1 \right)}









%
%





\newcommand{\LQG}{LQG\xspace}
\newcommand{\myParagraph}[1]{{\bf #1.}\xspace}

\newcommand{\logdet}{\log\det}
\newcommand{\myFigure}[1]{Fig.~\ref{#1}}
\newcommand{\nrRobots}{n}

\newcommand{\trandom}{{\tt random$^*$}\xspace}
\newcommand{\tlogdet}{{\tt logdet}\xspace}
\newcommand{\tslqg}{{\tt s-LQG}\xspace}
\newcommand{\toptimal}{{\tt optimal}\xspace}
\newcommand{\tallSensors}{{\tt allSensors}\xspace}

\newcommand{\of}[2]{_{#1}(#2)}
\newcommand{\initialCovariance}{\Sigma\att{1}{0}}
\newcommand{\allT}{t=1,2,\ldots,T}




\usepackage{float}

\usepackage{hyperref}
\usepackage{graphicx}
\usepackage{epstopdf}
\usepackage{epsfig}

\graphicspath{{./figures/},{../figures/},{../},{./code/}}

\usepackage{xr}
\usepackage{cite}
\usepackage{amsthm}
\newtheoremstyle{mystyle}
  {}
  {}
  {\itshape}
  {}
  {\bfseries}
  {.}
  { }
  {\thmname{#1}\thmnumber{ #2}\thmnote{ (#3)}}
\theoremstyle{mystyle}

\floatname{algorithm}{Algorithm}

\newtheorem{mydef}{Definition}

\newtheorem{mytheorem}{Theorem}
\newtheorem{mylemma}{Lemma}
\newtheorem{myremark}{Remark}
\newtheorem{mycorollary}{Corollary}
\newtheorem{myproposition}{Proposition}

\newtheorem{myproblem}{Problem}

\renewcommand{\at}[1]{_{#1}}
\renewcommand{\att}[2]{_{#1|#2}}
\renewcommand{\trace}[1]{\textrm{tr}\left( #1\right)}




\usepackage{algorithm}
\usepackage{algcompatible}

\usepackage{tikz}

\title{\huge{Sensing-Constrained LQG Control}}

\author{
Vasileios Tzoumas,$^{1,2}$ Luca Carlone,$^{2}$ George J.~Pappas,$^{1}$ Ali Jadbabaie$^{2}$ 
\thanks{$^{1}$The authors are with the Department of Electrical and Systems Engineering, University of Pennsylvania, Philadelphia, PA 19104 USA (email: {\fontsize{8}{8}\selectfont\ttfamily\upshape \{pappagsg, vtzoumas\}@seas.upenn.edu}).}
\thanks{$^{2}$The authors are with the Institute for Data, Systems and Society, and the Laboratory for Information and Decision Systems, Massachusetts Institute of Technology, Cambridge, MA 02139 USA (email: {\fontsize{8}{8}\selectfont\ttfamily\upshape \{jadbabai, lcarlone, vtzoumas\}@mit.edu}).}
\thanks{This work was supported in part by TerraSwarm, one of six centers of
STARnet, a Semiconductor Research Corporation program sponsored by
MARCO and DARPA, and in part by AFOSR Complex Networks Program.
}
}

\let\OLDthebibliography\thebibliography
\renewcommand\thebibliography[1]{
  \OLDthebibliography{#1}
  \setlength{\parskip}{0pt}
  \setlength{\itemsep}{0pt}
}

\begin{document}

\maketitle


\begin{abstract}
Linear-Quadratic-Gaussian (\LQG) control is concerned 
with the design of 
an optimal controller and estimator for linear Gaussian systems with imperfect state information.
Standard \LQG assumes the set of sensor measurements, to be fed to the estimator, to be given.
However, in many problems, arising in networked systems and robotics, 
one may not be able to use all the available sensors, due to power or payload constraints, 
or may be interested in using the smallest subset of sensors that guarantees the attainment 
of a desired control~goal.
In this paper, we introduce the \emph{sensing-constrained \LQG control} problem, in~which one has 
to jointly design sensing, estimation, and control, under given constraints on the resources spent for sensing.
We~focus on the realistic case in which the  sensing strategy 
has to be selected among a finite set of possible sensing modalities. 
While the computation of the optimal sensing strategy is intractable, 
we~present the first scalable algorithm that computes a near-optimal sensing strategy with provable 
sub-optimality guarantees.  
To this end, we~show that a separation principle holds, which allows the design of sensing, estimation, and control policies in isolation.  
We conclude the paper by discussing two applications of sensing-constrained \LQG control, namely,
\emph{sensing-constrained formation control} and 
\emph{resource-constrained robot navigation}.
\end{abstract}

\begin{tikzpicture}[overlay, remember picture]
\path (current page.north east) ++(-4,-0.2) node[below left] {
	This paper has been accepted for publication in the IEEE Transactions of Automatic Control.
};
\end{tikzpicture}
\begin{tikzpicture}[overlay, remember picture]
\path (current page.north east) ++(-4.8,-0.6) node[below left] {
Please cite the paper as:
	V.~Tzoumas, L.~Carlone, George J.~Pappas, A.~Jadbabaie
};
\end{tikzpicture}
\begin{tikzpicture}[overlay, remember picture]
\path (current page.north east) ++(-3.9,-1) node[below left] {
		``LQG Control and Sensing Co-Design", IEEE Transactions of Automatic Control (TAC), 2020.
};
\end{tikzpicture}


\section{Introduction}\label{sec:Intro}

Traditional approaches to control of systems with partially observable state assume the choice of
sensors 
 used to observe 
 the system is given.
The choice of sensors usually results from a preliminary design phase in which an 
expert designer selects a suitable sensor suite that accommodates estimation requirements (e.g., observability, 
desired estimation error) and system constraints (e.g., size, cost).
Modern control applications, from large networked systems to miniaturized robotics systems, 
pose serious limitations to the applicability of this traditional paradigm. 
In large-scale networked systems (e.g., smart grids or robot swarms), in which new nodes are continuously 
added and removed from the network, 
a manual re-design of the sensors becomes cumbersome and expensive, and it is simply not scalable. 
In miniaturized robot systems, while the set of onboard sensors is fixed, 
it may be desirable to selectively activate only a subset of the sensors during different phases of operation, 
 in order to minimize 
 power consumption. 
%
%
In both application scenarios,
one usually has access to a (possibly large) list of potential sensors, 
but, due to resource constraints (e.g., cost, power), can only utilize 
a subset of them. Moreover, the need for online and large-scale sensor selection demands for 
 automated approaches that efficiently select a subset of sensors 
 to maximize system performance.
%
%
%
%

Motivated by these applications,
in this paper we consider the problem of jointly designing control, estimation, and sensor selection for a 
system with partially observable state.

\myParagraph{Related work}
One body of related work is \emph{control over band-limited communication channels}, which 
investigates the trade-offs between communication constraints (e.g., data rate, quantization, delays)
and control performance (e.g., stability) in networked control systems. 
Early work  provides results on the impact of quantization~\cite{Elia01tac-limitedInfoControl}, finite data 
rates~\cite{Nair04sicon-rateConstrainedControl,Tatikonda04tac-limitedCommControl}, and separation principles for \LQG design with 
communication constraints~\cite{Borkar97-limitedCommControl}; more recent work focuses on privacy constraints~\cite{LeNy14tac-limitedCommControl}. 
We refer the reader to the surveys~\cite{Nair07ieee-rateConstrainedControl,Hespanha07ieee-networkedControl,Baillieul07ieee-networkedControl}.
A~second set of related work is \emph{sensor selection and scheduling}, in which one has to select a (possibly time-varying) 
set of sensors in order to monitor a phenomenon of interest. 
Related literature includes approaches based on randomized sensor selection~\cite{gupta2006stochastic}, dual volume sampling~\cite{avron2013faster,li2017dual},
convex relaxations~\cite{joshi2009sensor,leny2011kalman}, and submodularity~\cite{shamaiah2010greedy,jawaid2015submodularity,tzoumas2015sensor}. 
The third set of related works is \emph{information-constrained (or information-regularized)} \LQG 
control~\cite{Shafieepoorfard13cdc-attentionLQG,tanaka2015sdp}.
Shafieepoorfard and Raginsky~\cite{Shafieepoorfard13cdc-attentionLQG} 
study rationally inattentive control laws for \LQG control and discuss their effectiveness  in stabilizing the system. 
Tanaka and Mitter~\cite{tanaka2015sdp} consider the co-design of sensing, control, and estimation, propose to augment the standard \LQG cost 
with an information-theoretic regularizer, and derive an elegant solution based on semidefinite programming.
The main difference between our proposal and~\cite{tanaka2015sdp} is that we consider the case in which 
the choice of sensors, rather than being arbitrary, is restricted to a finite set of available sensors.


\myParagraph{Contributions}
We extend the Linear-Quadratic-Gaussian (\LQG) control  to the case in which, besides 
designing an optimal controller and estimator, one has to select a set of sensors 
to be used to observe the system state. 
In particular, we formulate the \emph{sensing-constrained} (finite-horizon) \LQG  problem as
the joint design of an optimal control and estimation policy, as well as the
selection of a subset of $k$ out of $N$ available sensors, that minimize the \LQG objective, which quantifies tracking performance and control effort.
We first leverage a separation principle to show that the design of sensing, control, and estimation, can be 
performed independently.  While the computation of the optimal sensing strategy is combinatorial in nature,
a key contribution of this paper is to provide the first scalable algorithm that computes a near-optimal sensing strategy with provable 
sub-optimality guarantees. 
We motivate the importance of the sensing-constrained \LQG problem, and demonstrate the effectiveness of the proposed algorithm in numerical experiments, by considering  
two application scenarios, namely,  
\emph{sensing-constrained formation control} and 
\emph{resource-constrained robot navigation}, which, due to page limitations, we include in the full version of this paper, located at the authors' websites.
All proofs can be found also in the {full version of this paper, located at the authors' websites.}

\myParagraph{Notation} 
Lowercase letters denote vectors and scalars, and uppercase letters denote matrices. We use calligraphic fonts to denote sets.
The identity matrix of size~$n$ is denoted with~$\eye_n$ (dimension is omitted when clear from the context). 
For a matrix $M$ and a vector $v$ of appropriate dimension, we define $\|v\|_M^2 \triangleq v\tran M v$.  For matrices $M_1,M_2,\ldots,M_k$, we define $\diag{M_1,M_2,\ldots,M_k}$ as the block diagonal matrix
with diagonal blocks the $M_1,M_2,\ldots,M_k$.


\section{Sensing-Constrained LQG Control} 
\label{sec:problemStatement}

In this section we formalize the sensing-constrained LQG control problem considered in this paper. 
We start by introducing the notions of  \emph{system},  \emph{sensors}, and  \emph{control policies}.
\paragraph{System} We consider a standard discrete-time (possibly time-varying) 
linear system with additive Gaussian noise:  
\begin{equation}
x\at{t+1}=A_tx_t+B_tu_t + w_t, \quad t = 1, 2, \ldots, T,\label{eq:system}
\end{equation}
where $x_t \in \mathbb{R}^{n_t}$ represents the state of the system at time~$t$, $u_t \in \mathbb{R}^{m_t}$ represents the control action, $w_t$ represents the process noise, and 
$T$ is a finite time horizon. In~addition, we~consider the system's initial condition $x_1$ to be a Gaussian random variable with covariance~$\Sigma\att{1}{0}$, and $w_t$ to be a Gaussian random variable with mean zero and covariance $W_t$, such that $w_t$ is independent of $x_1$ and $w_{t'}$ for all $t'=1,2,\ldots,T$, $t'\neq t$.

\paragraph{Sensors} We consider the case where we have a (potentially large) set of available sensors, which take noisy linear observations of the system's state. 
In particular, let~$\calV$  be a set of indices such that each index $i\in\calV$ uniquely identifies a 
sensor that can be used to observe the state of the system. We consider sensors of the form
\begin{equation}
y_{i,t}=C_{i,t}x_t+v_{i,t}, \quad i \in \calV,\label{eq:sensors}
\end{equation}
where $y_{i,t}\in \mathbb{R}^{p_{i,t}}$ represents the measurement of sensor $i$ at time~$t$, and $v_{i,t}$ represents the measurement noise of sensor~$i$.  We assume $v_{i,t}$ to be a Gaussian random variable with mean zero and positive definite covariance $V_{i,t}$, such that~$v_{i,t}$ is independent of $x_1$, and of $w_{t'}$ for any $t'\neq t$, and independent of $v_{i',t'}$ for all $t'\neq t$, and any $i'\in\calV$, $i' \neq i$.

In this paper we are interested in the case 
in which 
we cannot use all the available sensors, and as a result, we need to
select a convenient subset of sensors in $\calV$ to maximize our control performance 
(formalized in Problem~\ref{prob:LQG} below). 

\begin{mydef}[Active sensor set and measurement model]\label{notation:sensor_noise_matrices}
Given a set of available sensors $\calV$, we say that 
$\calS \subset \calV$ is an \emph{active sensor set} if we can observe 
the measurements from each sensor $i\in \calS$ for all $t = 1,2,\ldots, T$.
Given an active sensor set $\calS=\{i_1, i_2 \ldots, i_{|\calS|}\}$,
we define the following quantities 
\beal
y_{t}(\calS) & \triangleq& [y_{i_1,t}\tran, y_{i_2,t}\tran, \ldots, y_{i_{|\calS|},t}\tran]\tran,\\
C_t(\calS) & \triangleq& [C\at{i_1,t}\tran, C\at{i_2,t}\tran, \ldots, C\at{i_{|\calS|},t}\tran]\tran, \\
V_t(\calS)& \triangleq& \text{diag}[V\at{i_1,t}, V\at{i_2,t}, \ldots,V\at{i_{|\calS|},t}]
\eeal
which lead to the definition of the \emph{measurement model}:
\begin{equation}
y_{t}(\calS) = C_t(\calS) x_t + v_{t}(\calS)\label{eq:activeSensors}
\end{equation}
where $v_t(\calS)$ is a zero-mean Gaussian noise with covariance~$V_t(\calS)$.
Despite the availability of a possibly large set of sensors $\calV$, our observer 
will only have access to the measurements produced by the active sensors.
\end{mydef}

The following paragraph formalizes how 
the choice of the active sensors affects the control policies.

\paragraph{Control policies}  We consider control policies~$u_t$ for all $t=1,2,\ldots,T$ 
that are only informed by the measurements collected by the active sensors:
\begin{equation*}
u_t = u\of{t}{\calS}= u_t(y\of{1}{\calS}, y\of{2}{\calS}, \ldots, y\of{t}{\calS}), \quad t = 1,2,\ldots,T.
\end{equation*}
Such policies are called \emph{admissible}.  

In this paper, we want to find a small set of active sensors $\calS$, 
and admissible controllers $u\of{1}{\calS}, u\of{2}{\calS},\ldots, u\of{T}{\calS}$, to solve the following sensing-constrained \LQG control problem.

\begin{myproblem}[Sensing-constrained LQG control]\label{prob:LQG}
Find a sensor set $\calS \subset \calV$ of cardinality at most $k$ to be active across all times $t=1,2,\ldots, T$, and control policies $u\of{1:T}{\calS} \triangleq \{u\of{1}{\calS},$ $u\of{2}{\calS}, \ldots, u\of{T}{\calS}\}$, that minimize the \LQG cost function:
\begin{equation}
\label{eq:sensingConstrainedLQG}
\min_{\scriptsize \begin{array}{c}
\calS \subseteq \calV, |\calS|\leq k,\\
u\of{1:T}{\calS} 
\end{array}} \sum_{t=1}^{T}\mathbb{E}\left[\|x\of{t+1}{\calS}\|^2_{Q_t} +\|u\of{t}{\calS}\|^2_{R_t}\right],
\end{equation}
where the state-cost matrices $Q_1, Q_2, \ldots, Q_T$ are positive semi-definite, the control-cost matrices $R_1, R_2, \ldots, R_T$ are positive definite, 
and the expectation is taken with respect to the initial condition $x_1$, the process noises $w_1, w_2, \ldots, w_T$, and the measurement noises $v\of{1}{\calS}, v\of{2}{\calS}, \ldots, v\of{T}{\calS}$.
\end{myproblem}

Problem~\ref{prob:LQG} generalizes the imperfect state-information \LQG control problem from the case where all sensors in $\calV$ are active, and only optimal control policies are to be found~\cite[Chapter~5]{bertsekas2005dynamic}, to the case where only a few sensors in $\calV$ can be active, and both optimal sensors and control policies are to be found jointly. 
While we already noticed that admissible control policies depend on the active sensor set $\calS$, it is worth noticing that 
this in turn implies that the state evolution also depends on $\calS$; for this reason we write $x\of{t+1}{\calS}$ in eq.~\eqref{eq:sensingConstrainedLQG}.
The intertwining between control and sensing calls for a joint design strategy.
In~the following section we focus on the design of a jointly optimal control {and sensing solution to Problem~\ref{prob:LQG}.}



\section{Joint Sensing and Control Design}

In this section we first present a separation principle that decouples sensing, estimation, and control, 
and allows designing them in cascade (Section~\ref{sec:separability}).
We then present a scalable algorithm for sensing and control design (Section~\ref{sec:designAlgorithm}).  

\subsection{Separability of Optimal Sensing and Control Design}\label{sec:separability}

We characterize the jointly optimal control and sensing solutions to Problem~\ref{prob:LQG}, and prove that  
they can be found in two separate steps, where first the sensing design is computed, and second the {corresponding optimal control design is found.}

\begin{mytheorem}[Separability of optimal sensing and control design]\label{th:LQG_closed}
Let the sensor set $\calS^\star$ and the controllers $u_1^\star, u_2^\star, \ldots, u_T^\star$ be a solution to the sensing-constrained \LQG Problem~\ref{prob:LQG}. Then, $\calS^\star$  and $u_1^\star,$ $u_2^\star,\ldots,u_T^\star$ can be computed in cascade as follows: 
\begin{align}
\calS^\star &\in \argmin_{\calS \subseteq \calV, |\calS|\leq k}\sum_{t=1}^T\text{tr}[\Theta_t\Sigma\att{t}{t}(\calS)],\label{eq:opt_sensors}\\
u_t^\star&=K_t\hat{x}_{t,\calS^\star}, \quad t=1,\ldots,T\label{eq:opt_control}
\end{align}
where $\hat{x}\of{t}{\calS}$ is the Kalman estimator of the state $x_t$, i.e.,
$\hat{x}\of{t}{\calS}\triangleq\mathbb{E}(x_t|y\of{1}{\calS},y\of{2}{\calS},\ldots,y\of{t}{\calS})$,
and $\Sigma\att{t}{t}(\calS)$ is $\hat{x}\of{t}{\calS}$'s error covariance, i.e.,
$\Sigma\att{t}{t}(\calS)\triangleq\mathbb{E}[(\hat{x}\of{t}{\calS}-x_t)(\hat{x}\of{t}{\calS}-x_t)\tran]$~\cite[Appendix~E]{bertsekas2005dynamic}.
In~addition, the matrices $\Theta_t$ and $K_t$ are independent of the selected sensor set $\calS$, and  they are computed as follows: the matrices $\Theta_t$ and $K_t$ are the solution of the backward Riccati recursion
\beal\label{eq:control_riccati}
S_t &= Q_t + N_{t+1}, \\
N_t &= A_t\tran (S_t\inv+B_tR_t\inv B_t\tran)\inv A_t, \\
M_t &= B_t\tran S_t B_t + R_t, \\ 
K_t &= - M_t\inv B_t\tran S_t A_t, \\ 
\Theta_t &= K_t\tran M_t K_t,
\eeal
with boundary condition $N_{T+1}=0$.
\end{mytheorem}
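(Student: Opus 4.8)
The plan is to reduce the joint problem to a two-stage computation: first fix the active sensor set and invoke the classical \LQG separation principle, then show that the resulting optimal cost depends on the sensor selection only through a single explicitly-weighted trace term.

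First I would fix an arbitrary active sensor set $\calS$. With $\calS$ held fixed, Problem~\ref{prob:LQG} is precisely a standard finite-horizon imperfect-state-information \LQG problem with measurement model \eqref{eq:activeSensors}. The classical separation principle \cite[Chapter~5]{bertsekas2005dynamic} then applies verbatim: the optimal admissible controller is the linear feedback $u_t^\star = K_t \hat{x}_{t,\calS}$ of the Kalman estimate $\hat{x}_{t,\calS}$, where the gains $K_t$ and the auxiliary matrices $S_t, N_t, M_t$ are generated by the backward control Riccati recursion \eqref{eq:control_riccati}. The crucial point, inherited from the classical theory, is that this recursion involves only the dynamics $A_t, B_t$ and the cost weights $Q_t, R_t$, and is therefore completely independent of the measurement model---hence independent of $\calS$. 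This establishes \eqref{eq:opt_control} together with the sensor-independence of $K_t$ and $\Theta_t$.

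The main work is the second step: evaluating the optimal cost $J^\star(\calS)$ attained by $u_t^\star = K_t\hat{x}_{t,\calS}$ and exhibiting its dependence on $\calS$. Here I would decompose the state as $x_t = \hat{x}_{t,\calS} + e_t$, where $e_t \triangleq x_t - \hat{x}_{t,\calS}$ is the Kalman estimation error with covariance $\Sigma_{t|t}(\calS)$. The orthogonality property of the conditional mean guarantees that $\hat{x}_{t,\calS}$ and $e_t$ are uncorrelated, so that substituting $x_t = \hat{x}_{t,\calS}+e_t$ and $u_t = K_t\hat{x}_{t,\calS}$ into the quadratic cost \eqref{eq:sensingConstrainedLQG} makes all cross terms vanish in expectation. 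Carrying out the dynamic-programming / completing-the-square computation along the recursion \eqref{eq:control_riccati}, the cost splits into a term collecting the contributions of the conditional-mean trajectory and the process and initial-state noise---which matches the perfect-information \LQG cost and is thus independent of the sensor set---and a term collecting the contributions of the estimation error, which accumulates as $\sum_{t=1}^T \text{tr}[\Theta_t\,\Sigma_{t|t}(\calS)]$ with $\Theta_t = K_t\tran M_t K_t$. The intuition is that the imperfect-information control differs from the perfect-information control $K_t x_t$ by the perturbation $-K_t e_t$, and since the first-order term vanishes at the optimum, the excess cost is the quadratic $\sum_t \mathbb{E}[e_t\tran K_t\tran M_t K_t e_t]$. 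Hence $J^\star(\calS) = J_0 + \sum_{t=1}^T \text{tr}[\Theta_t\,\Sigma_{t|t}(\calS)]$ with $J_0$ constant in $\calS$, so minimizing $J^\star(\calS)$ over $\calS\subseteq\calV$ with $|\calS|\le k$ reduces to minimizing the second term alone, yielding \eqref{eq:opt_sensors}. I expect the principal obstacle to be the bookkeeping in this second step---propagating the error-covariance contribution through the backward recursion and verifying that its per-stage weight collapses exactly to $\Theta_t = K_t\tran M_t K_t$ rather than some other function of $S_t, M_t, A_t, B_t$; the orthogonality of estimate and error is what removes every coupling term and lets the two cost components be accounted for separately.
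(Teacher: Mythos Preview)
Your proposal is correct and follows essentially the same route as the paper: the paper's proof (via Lemma~\ref{lem:LQG_closed}) carries out exactly the backward dynamic-programming/completing-the-square computation you describe, obtaining the explicit cost $g(\calS) = \mathbb{E}(\|x_1\|_{N_1}^2) + \sum_{t=1}^T[\trace{\Theta_t\Sigma\att{t}{t}(\calS)}+\trace{W_tS_t}]$, from which \eqref{eq:opt_sensors} follows since only the middle term depends on $\calS$. The only presentational difference is that the paper re-derives the separation principle from scratch in the induction rather than citing \cite[Chapter~5]{bertsekas2005dynamic} as a black box, but the substance is identical.
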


\begin{myremark}[Certainty equivalence principle]
The control gain matrices 
$K_1, K_2, \ldots, K_T$
are the same as the ones that make the controllers $(K_1x_1$, $K_1x_2,\ldots, K_T x_T)$ optimal for the perfect state-information version of Problem~\ref{prob:LQG}, where the state $x_t$ is known to the controllers~\cite[Chapter~4]{bertsekas2005dynamic}.
\end{myremark}


\begin{algorithm}[t]
\caption{\mbox{Joint Sensing and Control design \hspace{-.25mm}for \hspace{-.25mm}Problem~\ref{prob:LQG}.}}
\begin{algorithmic}[1]
\REQUIRE  Time horizon $T$, available sensor set $\calV$, covariance matrix $\initialCovariance$ of initial condition $x_1$; for all $t=1,2,\ldots, T$, system matrix $A_t$, input matrix $B_t$, LQG cost matrices $Q_t$ and $R_t$, process noise covariance matrix $W_t$; and for all sensors $i \in \calV$, measurement matrix $C_{i,t}$, and measurement noise covariance matrix~$V_{i,t}$.
\ENSURE Active sensors $\widehat{\mathcal{S}}$, and control matrices $K_1, \ldots, K_T$.
\STATE{$\widehat{\calS}$ is returned by Algorithm~\ref{alg:greedy} that finds a (possibly approximate) solution to the optimization problem in eq.~\eqref{eq:opt_sensors};
 }
\STATE{$K_1,\ldots,K_T$ are computed using the recursion in eq.~\eqref{eq:control_riccati}.
 }
\end{algorithmic} \label{alg:overall}
\end{algorithm}

Theorem~\ref{th:LQG_closed} decouples the design of the sensing from the controller design.
Moreover, it suggests that once an optimal sensor set~$\calS^\star$ is found, then the optimal controllers are equal to $K_t\hat{x}\of{t}{\calS}$, which correspond to the standard \LQG control policy. 
This should not come as a surprise, since for a given sensing strategy, Problem~\ref{prob:LQG}
reduces to standard \LQG control.

We conclude this section with a remark providing a more intuitive interpretation of the sensor design step in eq.~\eqref{eq:opt_sensors}.

\begin{myremark}[Control-aware sensor design]\label{rmk:interpretation}
In order to provide more insight on the cost function in~\eqref{eq:opt_sensors}, we rewrite it~as: 
\begin{align}
\!\!\!
\displaystyle\sum_{t=1}^{T}\text{tr}[\Theta_t\Sigma\att{t}{t}(\calS)] &\!=\!
\displaystyle\sum_{t=1}^{T}\mathbb{E}\left(\text{tr}\{[x_t-\hat{x}\of{t}{\calS}]\tran\Theta_t[x_t-\hat{x}\of{t}{\calS}]\}\right) \nonumber \\
&\!=\!
\displaystyle\sum_{t=1}^{T}\mathbb{E}\left( \| K_t x_t-K_t \hat{x}\of{t}{\calS}\|^2_{M_t} \right), \label{eq:interpretationRemark}
\end{align}
where in the first line we used the fact that 
$\Sigma\att{t}{t}(\calS) = \mathbb{E}\left[(x_t-\hat{x}\of{t}{\calS})(x_t-\hat{x}\of{t}{\calS})\tran\right]$, 
and in the second line we substituted the definition of $\Theta_t = K_t\tran M_t K_t$ from eq.~\eqref{eq:control_riccati}.

From eq.~\eqref{eq:interpretationRemark}, it is clear that each term 
$\text{tr}[\Theta_t\Sigma\att{t}{t}(\calS)]$ captures the expected control mismatch between the 
imperfect state-information controller $u\of{t}{\calS}=K_t\hat{x}\of{t}{\calS}$ (which is only aware of the 
measurements from the active sensors) and the  perfect state-information controller~$K_tx_t$.
This is an important distinction from the existing sensor selection literature. 
In particular, while standard sensor selection attempts to minimize the estimation covariance, for instance by 
minimizing
\begin{equation}
\sum_{t=1}^{T}\text{tr}[\Sigma\att{t}{t}(\calS)] \triangleq
\displaystyle\sum_{t=1}^{T}\mathbb{E}\left(\| x_t-\hat{x}\of{t}{\calS}\|^2_2 \right),\label{eq:interpretationRemark_kalman}
\end{equation}
the proposed \LQG cost formulation attempts to minimize the estimation error of only the informative states to the perfect state-information controller: for example, the contribution of all $x_t-\hat{x}\of{t}{\calS}$ in the null space of $K_t$ to the total control mismatch in eq.~\eqref{eq:interpretationRemark} is zero.
Hence,  in contrast to minimizing the cost function in eq.~\eqref{eq:interpretationRemark_kalman}, minimizing the cost function in eq.~\eqref{eq:interpretationRemark} results to a control-aware sensing~design.
 
\end{myremark}



\begin{algorithm}[t]
\caption{Sensing design for Problem~\ref{prob:LQG}.}
\begin{algorithmic}[1]
\REQUIRE Time horizon $T$, available sensor set $\calV$, covariance matrix $\initialCovariance$ of system's initial condition $x_1$, and for any time $t =1,2,\ldots, T$, any sensor $i \in \calV$, process noise covariance matrix $W_t$, measurement matrix $C_{i,t}$, and measurement noise covariance matrix $V_{i,t}$.
\ENSURE Sensor set $\algS$.
\STATE Compute $\Theta_1,\Theta_2,\ldots,\Theta_T$ using recursion in eq.~\eqref{eq:control_riccati}; \label{line:computeThetas}
\STATE $\algS\leftarrow\emptyset$; \quad $i \leftarrow 0$; \label{line:initialize}
\WHILE {$i < k$}   \label{line:while}
	\FORALL {$a\in \calV\setminus\algS$} \label{line:startFor1}
	\STATE{$\algS_a\leftarrow \algS\cup \{a\}$; \quad $\initialCovariance(\algS_a)\leftarrow \initialCovariance$;}
	\FORALL {$t = 1,\ldots, T$}
	\vspace{0.3mm}
	\STATE{$\Sigma\att{t}{t}(\algS_a)\leftarrow$}
	\STATE{$[ 
	\Sigma\att{t}{t-1}(\algS_a)\inv + C_t(\algS_a)\tran V_t(\algS_a)\inv C_t(\algS_a)]\inv$;} 
	\STATE{$\Sigma\att{t+1}{t}(\algS_a)\leftarrow A_{t} \Sigma\att{t}{t}(\algS_a) A_{t}\tran + W_{t}$;}
	\ENDFOR
	\STATE{$\text{cost}_a \leftarrow \sum_{t=1}^T\text{tr}[\Theta_t\Sigma\att{t}{t}(\algS_a)]$;} \label{line:cost}
	\ENDFOR \label{line:endFor1}
\STATE{$a_i \leftarrow \arg\min_{a \in \calV\setminus \mathcal{S}} \text{cost}_a$;} \label{line:best_a}
\STATE{$\algS\leftarrow \algS \cup \{a_i\}$; \quad $i \leftarrow i+1$;} \label{line:add_a}
\ENDWHILE
\end{algorithmic} \label{alg:greedy}
\end{algorithm}

\subsection{Scalable Near-optimal Sensing and Control Design}\label{sec:designAlgorithm}

This section proposes a practical design algorithm for Problem~\ref{prob:LQG}.
The pseudo-code of the algorithm is presented in Algorithm~\ref{alg:overall}.
Algorithm~\ref{alg:overall} follows the result of Theorem~\ref{th:LQG_closed}, 
and jointly designs sensing and control by first 
computing an active sensor set (line 1 in Algorithm~\ref{alg:overall}) and then computing the control policy
(line 2 in Algorithm~\ref{alg:overall}).
We discuss each step of the design process in the rest of this section.

\subsubsection{Near-optimal Sensing design}
The optimal sensor design can be computed by solving the optimization problem in eq.~\eqref{eq:opt_sensors}.
The problem is combinatorial in nature, since it requires to select a subset of elements of cardinality $k$ out of all the available sensors that 
induces the smallest cost. 

In this section we propose a greedy algorithm, whose pseudo-code is given in Algorithm~\ref{alg:greedy}, 
that computes a (possibly approximate) solution to the problem in eq.~\eqref{eq:opt_sensors}. 
Our interest towards this greedy algorithm is motivated by the fact that it is scalable 
(in Section~\ref{sec:guarantees} we show that its complexity is linear in the number of available sensors) 
and is provably close to the optimal solution of the problem in eq.~\eqref{eq:opt_sensors} 
\mbox{(we provide suboptimality bounds in Section~\ref{sec:guarantees}).}

Algorithm~\ref{alg:greedy} computes the matrices $\Theta_t$ ($\allT$)
which appear in the cost function in eq.~\eqref{eq:opt_sensors} (line~\ref{line:computeThetas}). Note that these matrices are independent 
on the choice of sensors. 
The~set of active sensors $\algS$ is initialized to the empty set (line~\ref{line:initialize}). 
The~``while loop'' in line~\ref{line:while} will be executed~$k$ times and at each time 
a sensor is greedily added to the set of active sensors~$\algS$. 
In~particular, the ``for loop'' in lines~\ref{line:startFor1}-\ref{line:endFor1} 
 computes the estimation covariance resulting by adding a sensor to the current 
active sensor set and the corresponding cost (line~\ref{line:cost}). 
Finally, the sensor inducing the smallest cost is selected (line~\ref{line:best_a}) 
and added \mbox{to the current set of active sensors (line~\ref{line:add_a}).}



\subsubsection{Control policy design}
The optimal control design is computed as in eq.~\eqref{eq:opt_control}, where
the control policy matrices \mbox{$K_1,K_2,\ldots,K_T$ are obtained 
from the recursion~in~eq.~\eqref{eq:control_riccati}.}


In the following section we characterize the approximation and running-time performance of Algorithm~\ref{alg:overall}.


\section{Performance Guarantees for Joint Sensing and Control Design}\label{sec:guarantees}

We prove that Algorithm~\ref{alg:overall} is the first scalable algorithm for the joint sensing and control design Problem~\ref{prob:LQG}, and that it achieves a value for the \LQG cost function in eq.~\eqref{eq:sensingConstrainedLQG} that is finitely close to the optimal.
We start by introducing the notion of supermodularity ratio (Section~\ref{sec:submodularity}), which will 
enable to bound the sub-optimality gap of Algorithm~\ref{alg:overall} (Section~\ref{sec:performanceGuarantees}).

\subsection{Supermodularity ratio of monotone functions}\label{sec:submodularity}

We define the supermodularity ratio of monotone functions.  We start with the notions of monotonicity and supermodularity.

\begin{mydef}[Monotonicity~{\cite{nemhauser78analysis}}]
Consider any finite ground set~$\mathcal{V}$.  The set function $f:2^\calV\mapsto \mathbb{R}$ is non-increasing if and only if for any $\mathcal{A}\subseteq \mathcal{A}'\subseteq\calV$, $f(\mathcal{A})\geq f(\mathcal{A}')$.
\end{mydef}

\begin{mydef}[Supermodularity~{\cite[Proposition 2.1]{nemhauser78analysis}}]\label{def:sub}
Consider any finite ground set $\calV$.  The set function $f:2^\calV\mapsto \mathbb{R}$ is supermodular if and only if
for any $\mathcal{A}\subseteq \mathcal{A}'\subseteq\calV$ and $x\in \calV$, $f(\mathcal{A})-f(\mathcal{A}\cup \{x\})\geq f(\mathcal{A}')-f(\mathcal{A}'\cup \{x\})$.
\end{mydef}
In words, a set function $f$ is supermodular if and only if it satisfies the following intuitive diminishing returns property: for any $x\in \mathcal{V}$, the marginal drop $f(\mathcal{A})-f(\mathcal{A}\cup \{x\})$ diminishes as $\mathcal{A}$ grows; equivalently, for any $\mathcal{A}\subseteq \mathcal{V}$ and $x\in \mathcal{V}$, the marginal drop $f(\mathcal{A})-f(\mathcal{A}\cup \{x\})$ is non-increasing.

\begin{mydef}[Supermodularity ratio{~\cite[Definition of elemental curvature on p.~5]{wang2016approximation}}]\label{def:super_ratio}
Consider any finite ground set~$\mathcal{V}$, and a non-increasing set \mbox{function $f:2^\calV\mapsto \mathbb{R}$}.  We define the~supermodularity ratio of $f$ as
\begin{equation*}
\gamma_f = \min_{\mathcal{A} \subseteq \mathcal{V}, x, x' \in \mathcal{V}\setminus\calA} \frac{f(\mathcal{A})-f(\mathcal{A}\cup \{x\})}{f(\mathcal{A}\cup\{x'\})-f[(\mathcal{A}\cup\{x'\})\cup \{x\}]}.
\end{equation*}
\end{mydef}
In words, the supermodularity ratio of a monotone set function $f$ measures how far $f$ is from being supermodular.  In particular, per the Definition~\ref{def:super_ratio} of supermodularity ratio, the supermodularity ratio $\gamma_f$ takes values in $[0,1]$, and
\begin{itemize}
\item $\gamma_f= 1$ if and only if $f$ is supermodular, {since if $\gamma_f= 1$}, then Definition~\ref{def:super_ratio} implies 
$ f(\mathcal{A})-f(\mathcal{A}\cup \{x\})\geq f(\mathcal{A}\cup\{x'\})-f[(\mathcal{A}\cup\{x'\})\cup \{x\}]$,
i.e., the drop $f(\mathcal{A})-f(\mathcal{A}\cup \{x\})$ is non-increasing as new elements are added in $\calA$.
\item $\gamma_f < 1$ if and only if $f$ is \emph{approximately supermodular}, in the sense that if $\gamma_f< 1$, then Definition~\ref{def:super_ratio} implies 
$ f(\mathcal{A})- f(\mathcal{A}\cup \{x\})\geq \textstyle\gamma_f \left\{f(\mathcal{A}\cup\{x'\})-f[(\mathcal{A}\cup\{x'\})\cup \{x\}]\right\}$,
i.e., the drop $f(\mathcal{A})-f(\mathcal{A}\cup \{x\})$ is approximately non-increasing as new elements are added in $\calA$; specifically, 
the supermodularity ratio $\gamma_f$ captures how much ones needs to discount the drop $f(\mathcal{A}\cup\{x'\})-f[(\mathcal{A}\cup\{x'\})\cup \{x\}]$, 
such that $f(\mathcal{A})-f(\mathcal{A}\cup \{x\})$ remains greater then, or equal to, $f(\mathcal{A}\cup\{x'\})-f[(\mathcal{A}\cup\{x'\})\cup \{x\}]$.
\end{itemize}

We next use the notion of supermodularity ratio Definition~\ref{def:super_ratio} to quantify the sub-optimality gap of Algorithm~\ref{alg:overall}.

\subsection{Performance Analysis for Algorithm~\ref{alg:overall}}\label{sec:performanceGuarantees}

We quantify Algorithm~\ref{alg:overall}'s running time, as well as, Algorithm~\ref{alg:overall}'s approximation performance, using the notion of supermodularity ratio introduced in Section~\ref{sec:submodularity}.  We conclude the section by showing that for appropriate LQG cost matrices $Q_1, Q_2, \ldots, Q_T$ and $R_1, R_2, \ldots, R_T$, Algorithm~\ref {alg:overall} achieves near-optimal approximate performance. 

\begin{mytheorem}[Performance of Algorithm~\ref{alg:overall}]\label{th:approx_bound} 
For any active sensor set $\calS\subseteq\calV$, and admissible control policies $u\of{1:T}{\calS} \triangleq \{u\of{1}{\calS}, u\of{2}{\calS}, \ldots, u\of{T}{\calS}\}$, 
let $h[\calS, u\of{1:T}{\calS}]$ be Problem~\ref{prob:LQG}'s cost function, i.e.,
\begin{equation*}
h[\calS, u\of{1:T}{\calS}]\triangleq\textstyle \sum_{t=1}^{T}\mathbb{E}(\|x\of{t+1}{\calS}\|^2_{Q_t}+\|u\of{t}{\calS}\|^2_{R_t});
\end{equation*}

Further define the following set-valued function and scalar:
\begin{equation}
\textstyle\hspace{-0.6em}g(\calS)\triangleq\min_{u\of{1:T}{\calS}} h[\calS, u\of{1:T}{\calS}], \label{eq:gS}
\end{equation}
\begin{equation*}
\textstyle\hspace{-0.6em}g^\star\triangleq\min_{ \substack{ 
\calS \subseteq \calV, |\calS|\leq k,\\
u\of{1:T}{\calS} }} h[\calS, u\of{1:T}{\calS}].
\end{equation*}

The following results hold true:
\begin{enumerate}
\item \textit{(Approximation quality)}  Algorithm~\ref{alg:overall} returns an active sensor set $\algS\subset \calV$ of cardinality $k$, 
and gain matrices $K_1,$ $K_2, \ldots,K_T$, such that
the cost $h[\algS, u\of{1:T}{\algS}]$ attained by the sensor set $\algS$ and the corresponding
 control policies $u\of{1:T}{\algS} \triangleq \{K_1\hat{x}\of{1}{\algS},\ldots,K_T\hat{x}\of{T}{\algS}\}$ satisfies
\begin{equation}\label{ineq:approx_bound}
\frac{h(\algS, u\of{1:T}{\algS})-g^\star}{g(\emptyset)-g^\star}\leq 
\exp(-\gamma_g)
\end{equation}
where $\gamma_g$ is the supermodularity ratio of 
$g(\calS)$ in eq.~\eqref{eq:gS}.

\item \textit{(Running time)} Algorithm~\ref{alg:overall} runs in $O(k|\calV|Tn^{2.4})$ time, where $n \triangleq\max_{\allT}(n_t)$ is the maximum system size in eq.~\eqref{eq:system}.

\end{enumerate}
\end{mytheorem}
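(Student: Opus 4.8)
The plan is to prove the two parts separately, using the separation principle of Theorem~\ref{th:LQG_closed} as the entry point. First I would note that Algorithm~\ref{alg:overall} returns the greedy sensor set $\algS$ together with the controllers $u\of{1:T}{\algS} = \{K_t\hat{x}\of{t}{\algS}\}$, which by Theorem~\ref{th:LQG_closed} are optimal for the fixed set $\algS$; hence $h(\algS, u\of{1:T}{\algS}) = g(\algS)$, and likewise $g^\star = g(\calS^\star)$ for an optimal sensor set $\calS^\star$. The same separation principle shows that the minimized LQG cost decomposes as $g(\calS) = \sum_{t=1}^{T}\text{tr}[\Theta_t\Sigma\att{t}{t}(\calS)] + c$, where the constant $c$ collects the process-noise contributions that do not depend on $\calS$. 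Consequently Algorithm~\ref{alg:greedy}, which greedily minimizes $\sum_{t=1}^{T}\text{tr}[\Theta_t\Sigma\att{t}{t}(\calS_a)]$, greedily minimizes $g$ as well, and the supermodularity ratio $\gamma_g$ of Definition~\ref{def:super_ratio} is unchanged by the additive constant $c$.

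The second step is to establish that $g$ is non-increasing. Writing the measurement update of Algorithm~\ref{alg:greedy} in information form, augmenting the active set by a sensor $i$ adds the positive semidefinite term $C\at{i,t}\tran V\at{i,t}\inv C\at{i,t}$ to the information matrix, so the posterior covariance can only shrink in the Loewner order. Since the prediction step $\Sigma\att{t+1}{t} = A_t\Sigma\att{t}{t}A_t\tran + W_t$ preserves the Loewner order, a straightforward induction over $t$ gives $\Sigma\att{t}{t}(\calS') \preceq \Sigma\att{t}{t}(\calS)$ whenever $\calS\subseteq\calS'$. Because $\Theta_t = K_t\tran M_t K_t \succeq 0$, each term $\text{tr}[\Theta_t\Sigma\att{t}{t}(\calS)]$ is non-increasing in $\calS$, and therefore so is $g$.

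With monotonicity in hand, I would close the approximation argument with the Nemhauser-style greedy accounting discounted by $\gamma_g$. At iteration $i$ with current set $\calS_i$, the sensor selected by Algorithm~\ref{alg:greedy} realizes the largest single-element drop of $g$, which is at least $\frac{1}{k}\sum_{o\in\calS^\star}[g(\calS_i) - g(\calS_i\cup\{o\})]$; the supermodularity-ratio inequality of Definition~\ref{def:super_ratio} lower-bounds this sum by $\gamma_g[g(\calS_i) - g(\calS_i\cup\calS^\star)]$, and monotonicity gives $g(\calS_i\cup\calS^\star)\leq g^\star$. Combining these yields the per-step contraction $g(\calS_{i+1}) - g^\star \leq (1 - \gamma_g/k)[g(\calS_i) - g^\star]$, and iterating over the $k$ greedy steps with $(1 - \gamma_g/k)^k \leq e^{-\gamma_g}$ produces $\frac{g(\algS) - g^\star}{g(\emptyset) - g^\star} \leq \exp(-\gamma_g)$, which is the claimed bound once $g(\algS)$ is replaced by $h(\algS, u\of{1:T}{\algS})$.

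For the running-time claim I would simply count operations: the while-loop of Algorithm~\ref{alg:greedy} runs $k$ times, the inner for-loop over $\calV\setminus\algS$ at most $|\calV|$ times, and each candidate triggers a Kalman recursion over $T$ steps whose per-step cost is dominated by a constant number of inversions and products of $n\times n$ matrices, i.e.\ $O(n^{2.4})$ with fast matrix multiplication; the precomputation of the $\Theta_t$ via the backward Riccati recursion costs $O(Tn^{2.4})$ and is dominated, giving the overall $O(k|\calV|Tn^{2.4})$. I expect the main obstacle to be the monotonicity step: one must show that the Loewner ordering of the covariances is preserved simultaneously through the measurement and time-propagation updates for every $t$, which is cleanest in the information form rather than via the covariance Riccati recursion; the greedy contraction is then a comparatively routine consequence of Definition~\ref{def:super_ratio}.
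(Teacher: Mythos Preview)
Your proposal is correct and follows the same route as the paper. The paper establishes monotonicity of $g$ via the Loewner ordering of the Kalman covariances (its Proposition~\ref{prop:monotonicity}, proved through Lemma~\ref{prop:one-step_monotonicity} and Corollary~\ref{cor:from_t_to_t}, which is exactly your information-form induction over the measurement and prediction steps) and then simply cites~\cite[Theorem~1]{chamon2016near} for the Nemhauser-style greedy contraction under the supermodularity ratio that you spell out; the running-time argument is the same operation count you give.
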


Theorem~\ref{th:approx_bound} 
ensures that  
Algorithm~\ref {alg:overall} is the first scalable algorithm for the sensing-constrained LQG control Problem~\ref{prob:LQG}.  In~particular, Algorithm~\ref {alg:overall}'s running time $O(k|\calV|Tn^{2.4})$ is linear both in the number of available sensors $|\calV|$, and the sensor set cardinality constraint $k$, as well as, linear in the Kalman filter's running time across the time horizon $\until{T}$.  Specifically, the contribution $n^{2.4}T$ in Algorithm~\ref {alg:overall}'s running time comes from the computational complexity of using the Kalman filter to compute the state estimation error covariances $\Sigma\att{t}{t}$ for each $t=1,2,\ldots,T$~\cite[Appendix~E]{bertsekas2005dynamic}.

Theorem~\ref{th:approx_bound} also guarantees that for non-zero ratio~$\gamma_g$
Algorithm~\ref{alg:overall} achieves a value for Problem~\ref{prob:LQG} that is finitely close to the optimal.
In~particular, the bound in ineq.~\eqref{ineq:approx_bound} improves as  $\gamma_g$ increases, 
since it is decreasing in~$\gamma_g$, and is characterized by the following extreme behaviors:
for $\gamma_g=1$, the bound in ineq.~\eqref{ineq:approx_bound} is $e^{-1}\simeq .37$, which is the 
minimum for any $\gamma_g \in [0,1]$, and hence, the best bound on Algorithm~\ref{alg:overall}'s approximation performance among all $\gamma_g \in [0,1]$ (ideally, the bound in ineq.~\eqref{ineq:approx_bound} would be~$0$ for $\gamma_g=1$, in which case Algorithm~\ref{alg:overall} would be exact, since it would be implied $h(\algS, u\of{1:T}{\algS})=g^\star$; however, even for supermodular functions, 
the best bound one can achieve in \mbox{the worst-case is $e^{-1}$~\cite{feige1998})}; 
for $\gamma_g=0$, ineq.~\eqref{ineq:approx_bound} is uninformative since 
it simplifies to $h(\algS, u\of{1:T}{\algS}) \leq g(\emptyset) = h(\emptyset, u\of{1:T}{\emptyset})$, which~is~trivially~satisfied.\footnote{The inequality $h(\algS, u\of{1:T}{\algS}) \leq h(\emptyset, u\of{1:T}{\emptyset})$ simply states that a 
control policy that is informed by the active sensor set $\calS$ has better performance than a policy that does not use any sensor;
for a more formal proof we refer the reader to Appendix~B.}

In the remaining of the section, we first prove that if the strict inequality  $\sum_{t=1}^T\Theta_t\succ 0$ holds, where each $\Theta_t$ is defined as in eq.~\eqref{eq:control_riccati}, then the ratio $\gamma_g$ in ineq.~\eqref{ineq:approx_bound} is non-zero, and as result Algorithm~\ref{alg:overall} achieves a near-optimal approximation performance (Theorem~\ref{th:submod_ratio}).  Then,  
we prove that the strict inequality $\sum_{t=1}^T\Theta_t\succ 0$ holds true in all LQG control problem
instances where a zero controller would result in a suboptimal
behavior of the system and, as a result, LQG control design
(through solving Problem~\ref{prob:LQG}) is necessary
to achieve their desired system performance (Theorem~\ref{th:freq}).


\begin{mytheorem}[Lower bound for supermodularity ratio $\gamma_g$]\label{th:submod_ratio}
Let $\Theta_t$ for all $\allT$ be defined as in eq.~\eqref{eq:control_riccati}, $g(\calS)$ be defined as in eq.~\eqref{eq:gS}, and for any sensor $i \in \calV$, $\bar{C}_{i,t}$ be the 
normalized measurement matrix $V_{i,t}^{-1/2}{C}_{i,t}$.

If $\sum_{t=1}^{T}\Theta_t\succ 0$, the supermodularity ratio $\gamma_g$ is non-zero. In addition, if we consider for simplicity that the Frobenius norm of each $\bar{C}_{i,t}$ is $1$, i.e., $\trace{\bar{C}_{i,t}\bar{C}_{i,t}\tran}=1$, and that $\text{tr}[\Sigma\att{t}{t}(\emptyset)]\leq \lambda_\max^2[\Sigma\att{t}{t}(\emptyset)]$, $\gamma_g$'s lower bound is
\begin{align}\label{ineq:sub_ratio_bound}
\begin{split}
\gamma_g\geq &\frac{\lambda_\min(\sum_{t=1}^T \Theta_t) }{\lambda_\max(\sum_{t=1}^T \Theta_t)}\frac{ \min_{t\in\{1,2,\ldots,T\}}\lambda_\min^2[\Sigma\att{t}{t}(\calV)] }{\max_{t\in\{1,2,\ldots,T\}}\lambda_\max^2[\Sigma\att{t}{t}(\emptyset)]}\\
&\dfrac{1+\min_{i\in\calV, t\in\until{T}}\lambda_\min[\bar{C}_{i} \Sigma\att{t}{t}(\calV) \bar{C}_{i}\tran]}{2+\max_{i\in\calV, t\in\until{T}}\lambda_\max[\bar{C}_{i} \Sigma\att{t}{t}(\emptyset) \bar{C}_{i}\tran]
}.
 \end{split}
\end{align}
\end{mytheorem}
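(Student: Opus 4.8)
The plan is to use the separation principle of Theorem~\ref{th:LQG_closed} to replace the optimal-cost function $g$ by the purely estimation-theoretic surrogate $\tilde g(\calS)\triangleq\sum_{t=1}^T\text{tr}[\Theta_t\Sigma\att{t}{t}(\calS)]$, and then to control the worst-case ratio of marginal drops of $\tilde g$ through the monotonicity of the Kalman recursion and the Sherman--Morrison--Woodbury identity. First I would observe that, by Theorem~\ref{th:LQG_closed} and the standard \LQG cost decomposition, $g(\calS)=\tilde g(\calS)+c$ for a constant $c$ that does not depend on $\calS$ (it collects the perfect-state-information cost and the process-noise terms). Since the supermodularity ratio involves only the differences $g(\calA)-g(\calA\cup\{x\})$, this constant cancels and $\gamma_g=\gamma_{\tilde g}$.

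I would then record the two monotonicity facts of the Kalman filter that drive everything: for $\calA\subseteq\calB\subseteq\calV$ one has $0\preceq\Sigma\att{t}{t}(\calV)\preceq\Sigma\att{t}{t}(\calB)\preceq\Sigma\att{t}{t}(\calA)\preceq\Sigma\att{t}{t}(\emptyset)$ for every $t$. Combined with $\Theta_t\succeq 0$, these inequalities show that $\tilde g$ is non-increasing, so all marginal drops are non-negative and $\gamma_{\tilde g}$ is well defined. I would write each marginal drop as $\tilde g(\calS)-\tilde g(\calS\cup\{x\})=\sum_{t=1}^T\text{tr}[\Theta_t(\Sigma\att{t}{t}(\calS)-\Sigma\att{t}{t}(\calS\cup\{x\}))]$ and estimate the per-time covariance reduction in the numerator and denominator of $\gamma_{\tilde g}$ separately.

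In the numerator (the drop at the smaller set $\calA$) the key observation is that activating $x$ can only reduce the prediction covariance at every time, so the total reduction dominates the direct single-step update $\Sigma\att{t}{t}(\calA)-(\Sigma\att{t}{t}(\calA)\inv+\bar{C}_{x,t}\tran\bar{C}_{x,t})\inv$, which Sherman--Morrison turns into $\Sigma\att{t}{t}(\calA)\bar{C}_{x,t}\tran(I+\bar{C}_{x,t}\Sigma\att{t}{t}(\calA)\bar{C}_{x,t}\tran)\inv\bar{C}_{x,t}\Sigma\att{t}{t}(\calA)$. In the denominator (the drop at the larger set $\calB=\calA\cup\{x'\}$) I would instead use the uniform upper bound $\Sigma\att{t}{t}(\calB)-\Sigma\att{t}{t}(\calB\cup\{x\})\preceq\Sigma\att{t}{t}(\calB)\preceq\Sigma\att{t}{t}(\emptyset)$ together with the simplifying hypothesis $\text{tr}[\Sigma\att{t}{t}(\emptyset)]\leq\lambda_{\max}^2[\Sigma\att{t}{t}(\emptyset)]$. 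Bounding the Sherman--Morrison gain $(I+\bar{C}\Sigma\bar{C}\tran)\inv$ between scalar multiples of the identity on the two extreme covariances $\Sigma\att{t}{t}(\calV)$ and $\Sigma\att{t}{t}(\emptyset)$, and normalizing with $\text{tr}[\bar{C}_{i,t}\bar{C}_{i,t}\tran]=1$, produces the covariance-magnitude ratio $\min_t\lambda_{\min}^2[\Sigma\att{t}{t}(\calV)]/\max_t\lambda_{\max}^2[\Sigma\att{t}{t}(\emptyset)]$ and the rational factor $(1+\min_{i,t}\lambda_{\min}[\bar{C}_i\Sigma\att{t}{t}(\calV)\bar{C}_i\tran])/(2+\max_{i,t}\lambda_{\max}[\bar{C}_i\Sigma\att{t}{t}(\emptyset)\bar{C}_i\tran])$. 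The spectral ratio $\lambda_{\min}(\sum_t\Theta_t)/\lambda_{\max}(\sum_t\Theta_t)$ is then extracted by treating the weights $\{\Theta_t\}$ collectively and using $\Theta_t\preceq\sum_\tau\Theta_\tau$; the hypothesis $\sum_t\Theta_t\succ 0$ is precisely what keeps the numerator estimate strictly positive, which already yields $\gamma_g>0$, and assembling the three ratios gives ineq.~\eqref{ineq:sub_ratio_bound}.

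The main obstacle is the \emph{temporal coupling} of the Kalman recursion: activating a sensor changes $\Sigma\att{t}{t}$ at \emph{every} time through the prediction step, so a marginal drop is not a single information update but a sum of coupled reductions, and one cannot simply reason one time step at a time. This is compounded by the fact that the individual $\Theta_t$ may be singular---only $\sum_t\Theta_t$ is assumed positive definite---so neither the strict positivity of the numerator nor the appearance of $\lambda_{\min}(\sum_t\Theta_t)$ can be obtained time-by-time; they must be argued from the collective spectrum of $\{\Theta_t\}$, uniformly over all $\calA$, $x$, and $x'$. The monotone ``propagation only helps'' domination is what keeps the numerator bound one-sided and clean, while the crude covariance upper bound $\Sigma\att{t}{t}(\cdot)\preceq\Sigma\att{t}{t}(\emptyset)$ is what tames the coupling in the denominator.
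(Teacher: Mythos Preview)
Your proposal is essentially the same approach as the paper's: reduce $g$ to $\tilde g(\calS)=\sum_t\trace{\Theta_t\Sigma\att{t}{t}(\calS)}$ via the constant offset, use Kalman-filter monotonicity to sandwich every covariance between $\Sigma\att{t}{t}(\calV)$ and $\Sigma\att{t}{t}(\emptyset)$, apply Woodbury/Sherman--Morrison to expose the single-sensor correction, and extract the spectral ratio of $\sum_t\Theta_t$ via trace--eigenvalue inequalities.

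Two minor deviations are worth noting. First, for the numerator the paper applies Woodbury at the point $\bar{\Omega}_t^{-1}$, which already uses the \emph{improved} prediction covariance $\Sigma\att{t}{t-1}(\calS\cup\{v\})$, whereas you apply it at $\Sigma\att{t}{t}(\calA)$; both are legitimate lower bounds because of the same propagation inequality you cite, and both reduce to the same extremal quantities once you replace the inner matrix by $\Sigma\att{t}{t}(\calV)$. Second, for the denominator the paper does \emph{not} stop at the crude bound $\Sigma\att{t}{t}(\calB)-\Sigma\att{t}{t}(\calB\cup\{x\})\preceq\Sigma\att{t}{t}(\emptyset)$; it performs a second Woodbury expansion and keeps \emph{two} terms---$\trace{\Theta_t H_t^{-1}}$ and the rank-correction term---each of which is then bounded separately via $\Sigma\att{t}{t}(\emptyset)$ and $\Sigma\att{t}{t}(\calV)$. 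It is precisely the presence of these two terms (one using the hypothesis $\trace{\Sigma\att{t}{t}(\emptyset)}\leq\lambda_{\max}^2[\Sigma\att{t}{t}(\emptyset)]$, the other contributing the factor $l'=\lambda_{\min}^{-1}(I+\bar C\Sigma\att{t}{t}(\calV)\bar C^{\mathsf T})$) that produces the ``$2$'' in the factor $(2+\max\lambda_{\max})$ of ineq.~\eqref{ineq:sub_ratio_bound}. Your single crude bound would yield a different constant; the Sherman--Morrison gain bounds you mention need to be applied to the denominator as well, not only to the numerator, to recover the stated form.
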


The supermodularity ratio bound in ineq.~\eqref{ineq:sub_ratio_bound} suggests 
two cases under which $\gamma_g$ can increase, and correspondingly, the performance bound of Algorithm~\ref{alg:overall} in eq.~\eqref{ineq:approx_bound} can improve:

\setcounter{paragraph}{0}
\paragraph{Case~1 where $\gamma_g$'s bound in ineq.~\eqref{ineq:sub_ratio_bound} increases} When the fraction $\lambda_\min(\sum_{t=1}^T \Theta_t) /\lambda_\max(\sum_{t=1}^T \Theta_t)$ increases to $1$, then the right-hand-side in ineq.~\eqref{ineq:sub_ratio_bound} increases.  Equivalently, the right-hand-side in ineq.~\eqref{ineq:sub_ratio_bound} increases when on average all the directions $x_t^{(i)}-\hat{x}_{t}^{(i)}$ of the estimation errors $x_t-\hat{x}_{t}=(x_t^{(1)}-\hat{x}_{t}^{(1)},x_t^{(2)}-\hat{x}_{t}^{(2)},\ldots, x_t^{(n_t)}-\hat{x}_{t}^{(n_t)})$ become equally important in selecting the active sensor set. To~see~this, consider for~example that $\lambda_\max( \Theta_t)=\lambda_\min(\Theta_t)=\lambda$; then, the cost function in eq.~\eqref{eq:opt_sensors}  that Algorithm~\ref {alg:overall} minimizes to select the active sensor set becomes
\begin{align*}
\sum_{t=1}^{T}\text{tr}[\Theta_t\Sigma\att{t}{t}(\calS)]&=
\lambda\sum_{t=1}^{T}\mathbb{E}\left[\text{tr}(\|	x_t-\hat{x}_{t}(\calS)\|_2^2)\right]\\
&=\lambda\sum_{t=1}^{T}\sum_{i=1}^{n_t}\mathbb{E}\left[\text{tr}(\|	x_t^{(i)}-\hat{x}_{t}^{(i)}(\calS)|_2^2)\right].
\end{align*}
Overall, 
it is {easier} for Algorithm~\ref {alg:overall} to approximate a solution to Problem~\ref {alg:overall} as the cost function in eq.~\eqref{eq:opt_sensors} becomes the cost function in the standard sensor selection problems where one minimizes the total estimation covariance as in eq.~\eqref{eq:interpretationRemark_kalman}.

\paragraph{Case~2 where $\gamma_g$'s bound in  ineq.~\eqref{ineq:sub_ratio_bound} increases}
When either the numerators of the last two fractions in the right-hand-side of ineq.~\eqref{ineq:sub_ratio_bound} increase or the denominators of the last two fractions in the right-hand-side of ineq.~\eqref{ineq:sub_ratio_bound} decrease, then  the right-hand-side in ineq.~\eqref{ineq:sub_ratio_bound} increases.
In~particular, the numerators of the last two fractions in right-hand-side of ineq.~\eqref{ineq:sub_ratio_bound} capture the estimation quality when all available sensors in $\calV$ are used, via the terms of the form $\lambda_\min[\Sigma\att{t}{t}(\calV)]$ and $\lambda_\min[\bar{C}_{i,t} \Sigma\att{t}{t}(\calV) \bar{C}_{i,t}\tran]$.  Interestingly, this suggests that the right-hand-side of ineq.~\eqref{ineq:sub_ratio_bound} increases when the available sensors in $\calV$ are inefficient in achieving low estimation error, that is, when the terms of the form $\lambda_\min[\Sigma\att{t}{t}(\calV)]$ and $\lambda_\min[\bar{C}_{i,t} \Sigma\att{t}{t}(\calV) \bar{C}_{i,t}\tran]$ increase.  Similarly, the denominators of the last two fractions in right-hand-side of ineq.~\eqref{ineq:sub_ratio_bound} capture the estimation quality when no sensors are used, via the terms of the form $\lambda_\max[\Sigma\att{t}{t}(\emptyset)]$ and $\lambda_\max[\bar{C}_{i,t} \Sigma\att{t}{t}(\emptyset) \bar{C}_{i,t}\tran]$. This suggests that the right-hand-side of ineq.~\eqref{ineq:sub_ratio_bound} increases when the measurement noise increases.

We next give a control-level equivalent condition to Theorem~\ref{th:submod_ratio}'s condition $\sum_{t=1}^{T} \Theta_t\succ 0$ for non-zero ratio $\gamma_g$.

\begin{mytheorem}[Control-level condition for near-optimal sensor selection] \label{th:freq}
Consider the 
\LQG problem where for any time $t=1,2,\ldots,T$, the state $x_t$ is known to each controller $u_t$ and the process noise $w_t$ is zero, i.e., the optimization problem
\begin{equation}\label{pr:perfect_state}
\!\!\textstyle\min_{u_{1:T}}\sum_{t=1}^{T}\left.[\|x\at{t+1}\|^2_{Q_t} +\|u_{t}(x_t)\|^2_{R_t}]\right|_{\Sigma\att{t}{t}=W_t=0}.
\end{equation}

Let $A_t$ to be invertible for all $t=1,2,\ldots,T$; the strict inequality $\sum_{t=1}^{T} \Theta_t\succ 0$ holds if and only if for all non-zero initial conditions~$x_1$,
\begin{align*}
0\notin \textstyle\arg\min_{u_{1:T}}\sum_{t=1}^{T}\left.[\|x\at{t+1}\|^2_{Q_t} +\|u_{t}(x_t)\|^2_{R_t}]\right|_{\Sigma\att{t}{t}=W_t=0}.
\end{align*}
\end{mytheorem}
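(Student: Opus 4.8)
The plan is to reduce eq.~\eqref{pr:perfect_state} to a deterministic linear--quadratic regulator and then to translate the algebraic condition $\sum_{t=1}^{T}\Theta_t\succ 0$ into the stated control-optimality property. Imposing $\Sigma\att{t}{t}=0$ (perfect state information) and $W_t=0$ (no process noise) makes the state exactly known and the dynamics noise-free, $x\at{t+1}=A_tx_t+B_tu_t$, so eq.~\eqref{pr:perfect_state} is an ordinary finite-horizon LQR problem from the fixed initial condition $x_1$. By the certainty-equivalence solution recorded in Theorem~\ref{th:LQG_closed} and the recursion in eq.~\eqref{eq:control_riccati}, the optimal feedback is $u_t=K_tx_t$ and the optimal cost equals $x_1\tran N_1 x_1$. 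The engine of the proof is the completion-of-squares identity
\begin{equation*}
\textstyle\sum_{t=1}^{T}\left(\normsq{x\at{t+1}}{Q_t}+\normsq{u_t}{R_t}\right)=x_1\tran N_1 x_1+\sum_{t=1}^{T}\normsq{u_t-K_tx_t}{M_t},
\end{equation*}
which holds along the trajectory produced by \emph{any} control sequence $u_{1:T}$. I would prove it by telescoping the differences $x_t\tran N_t x_t-x\at{t+1}\tran N_{t+1}x\at{t+1}$ and collapsing each per-step term to $\normsq{u_t-K_tx_t}{M_t}$, using the Riccati relations $S_t=Q_t+N_{t+1}$ and $B_t\tran S_tA_t=-M_tK_t$ together with the Woodbury form $N_t=A_t\tran(S_t-S_tB_tM_t\inv B_t\tran S_t)A_t$ of the recursion.

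Next I would apply this identity to the zero control. Under $u_{1:T}=0$ the state follows $x_t=\Phi_{t,1}x_1$ with $\Phi_{t,1}\triangleq A_{t-1}\cdots A_1$ and $\Phi_{1,1}=\eye$, so the excess cost over the optimum is
\begin{equation*}
\textstyle\sum_{t=1}^{T}\normsq{K_t\Phi_{t,1}x_1}{M_t}=x_1\tran\left(\sum_{t=1}^{T}\Phi_{t,1}\tran\Theta_t\Phi_{t,1}\right)x_1,
\end{equation*}
where I substituted $\Theta_t=K_t\tran M_tK_t$. Because every $M_t\succ 0$ (as $R_t\succ 0$ and $S_t\succeq 0$), this quantity is nonnegative and vanishes exactly when $K_t\Phi_{t,1}x_1=0$ for all $t$. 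Hence the zero control belongs to the $\argmin$ for a given $x_1$ if and only if $K_t\Phi_{t,1}x_1=0$ for every $t$, so the right-hand side of the theorem---that this never occurs for a nonzero $x_1$---is equivalent to $\sum_{t=1}^{T}\Phi_{t,1}\tran\Theta_t\Phi_{t,1}\succ 0$.

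It therefore remains to establish $\sum_{t=1}^{T}\Theta_t\succ 0\iff\sum_{t=1}^{T}\Phi_{t,1}\tran\Theta_t\Phi_{t,1}\succ 0$. Since $\Theta_t\succeq 0$ shares its kernel with $K_t$, the left inequality is equivalent to $\bigcap_{t}\ker K_t=\{0\}$, while the right one is equivalent to $\bigcap_{t}\ker(K_t\Phi_{t,1})=\{0\}$, and invertibility of every $A_t$ makes each $\Phi_{t,1}$ a bijection. I expect this bridging step to be the main obstacle: the two conditions evaluate the gains $K_t$ on different vectors ($z$ versus $\Phi_{t,1}z$), so their simultaneous triviality is not a formal consequence of invertibility alone and must be extracted from the coupling between the $K_t$ and the $A_t$ threaded through the recursion in eq.~\eqref{eq:control_riccati}, crucially using $Q_t\succeq 0$. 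The concrete plan here is to propagate the condition backward through the ``coasting'' subspaces $\calZ_t=\{x:K_tx=0,\ A_tx\in\calZ_{t+1}\}$ with $\calZ_{T+1}=\Real{n}$, for which $\calZ_1=\{0\}$ is exactly the right-hand side, and to show via the positive-semidefinite Riccati structure linking $\ker K_t$ to $\ker S_t=\ker(Q_t+N_{t+1})$ that a nonzero common kernel vector of all the $K_t$ is present precisely when $\calZ_1$ is nontrivial.
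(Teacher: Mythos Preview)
Your reduction to the equivalence
\[
\sum_{t=1}^{T}\Theta_t\succ 0\quad\Longleftrightarrow\quad\sum_{t=1}^{T}\Phi_{t,1}\tran\Theta_t\,\Phi_{t,1}\succ 0
\]
is correct and coincides with the paper's route: your completion-of-squares identity together with the evaluation at $u_{1:T}=0$ recovers exactly the content of Proposition~\ref{prop:iff_for_all_zero_control} and Lemma~\ref{lem:observability_condition}, just packaged differently. The paper then closes the argument with Lemma~\ref{lem:sum_theta}, which asserts precisely the displayed equivalence under invertibility of the $A_t$ alone, with no appeal to the Riccati coupling.

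You are right to flag this bridging equivalence as the crux, and your caution is better founded than you realize. The paper's proof of Lemma~\ref{lem:sum_theta} writes $z\tran\Theta_t z=\trace{\phi_t\phi_t\tran\,U_t\tran\Theta_t U_t}$ with $\phi_t=U_t\inv z$ and then ``considers a time $t'$ such that $\phi_{t'}\phi_{t'}\tran\preceq\phi_t\phi_t\tran$ for all $t$''; but two rank-one matrices are Loewner-comparable only when the underlying vectors are parallel, so in general no such $t'$ exists and the step collapses. Worse, the Riccati structure you plan to exploit does not rescue the equivalence. Take $T=2$, $n=2$, $A_2=\eye$, $A_1=\bigl(\begin{smallmatrix}0&1\\1&0\end{smallmatrix}\bigr)$, $B_1=B_2=e_1$, $R_1=R_2=r>0$, $Q_1=e_2e_2\tran$, $Q_2=e_1e_1\tran$. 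Running eq.~\eqref{eq:control_riccati} backward yields $K_2\propto e_1\tran$ and $K_1\propto e_2\tran$, so $\Theta_1+\Theta_2=\diag{\tfrac{1}{1+r},\tfrac{r}{(2+r)(1+r)}}\succ 0$; yet from $x_1=e_1$ the uncontrolled trajectory is $x_2=x_3=e_2$, with cost $\|e_2\|_{Q_1}^2+\|e_2\|_{Q_2}^2=1$, which equals the optimal cost $e_1\tran N_1 e_1=1$ since $N_1=\diag{1,\tfrac{r}{2+r}}$. Thus $0\in\argmin$ at a nonzero $x_1$ even though $\sum_t\Theta_t\succ 0$: the stated biconditional fails, and neither the paper's Lemma~\ref{lem:sum_theta} nor your coasting-subspace plan can close the gap, because the assertion you are trying to bridge to is false as written.
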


Theorem~\ref{th:freq} suggests that Theorem~\ref{th:submod_ratio}'s sufficient condition $\sum_{t=1}^{T} \Theta_t\succ 0$ for non-zero ratio $\gamma_g$~holds if and only if for any non-zero initial condition~$x_1$ the all-zeroes control policy $u_{1:T}=(0,0,\ldots,0)$ is suboptimal for the noiseless perfect state-information LQG problem in eq.~\eqref{pr:perfect_state}.  

Overall, Algorithm~\ref{alg:overall} is the first scalable algorithm for Problem~\ref{prob:LQG}, and 
(for the \LQG control problem instances of interest
where a zero controller would result in a suboptimal behavior of the system and, as a result, \LQG control design is necessary to achieve their desired system performance)
it achieves close to optimal approximate performance.

\section{Numerical Experiments}\label{sec:exp}

We consider two application scenarios for the proposed sensing-constrained \LQG control framework:
\emph{sensing-constrained formation control}  and 
\emph{resource-constrained robot navigation}. 
We present a Monte Carlo analysis for both scenarios, which demonstrates that 
(i) the proposed sensor selection strategy is near-optimal, and in particular, the resulting \LQG-cost 
(tracking performance) matches the optimal selection in all tested instances 
for which the optimal selection could be computed via a brute-force approach,
(ii) a more naive selection which attempts to minimize the state estimation covariance~\cite{jawaid2015submodularity}
(rather than the \LQG cost) has degraded \LQG tracking performance, often comparable to a random selection,
(iii) in the considered instances, a clever selection of a small subset of sensors can ensure an \LQG cost that is 
close to the one obtained by using all available sensors, hence providing an effective alternative for control under 
sensing constraints~\cite{carlone2017attention}.

\newcommand{\mpw}{4.5cm}
\begin{figure}[t]
\hspace{-4mm}
\begin{minipage}{\textwidth}
\begin{tabular}{cc}%
\begin{minipage}{3.5cm}%
\centering
\includegraphics[width=1.05\columnwidth]{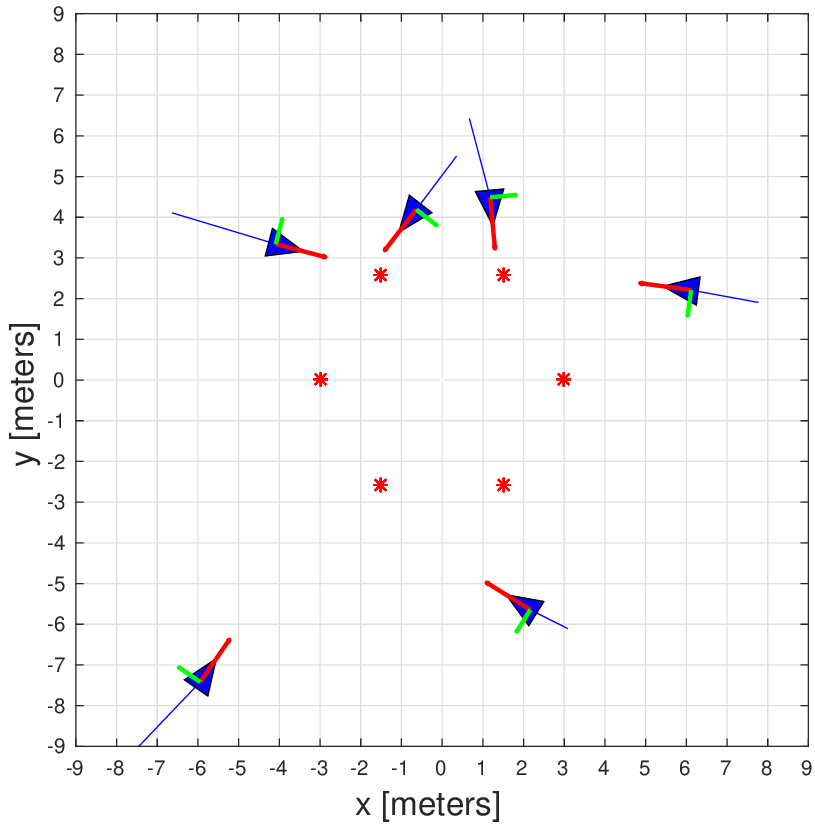} \\
(a) formation control 
\end{minipage}
& \hspace{-3mm}
\begin{minipage}{5.5cm}%
\centering%
\includegraphics[width=1.05\columnwidth,trim=0cm 1cm 0cm 2cm,clip]{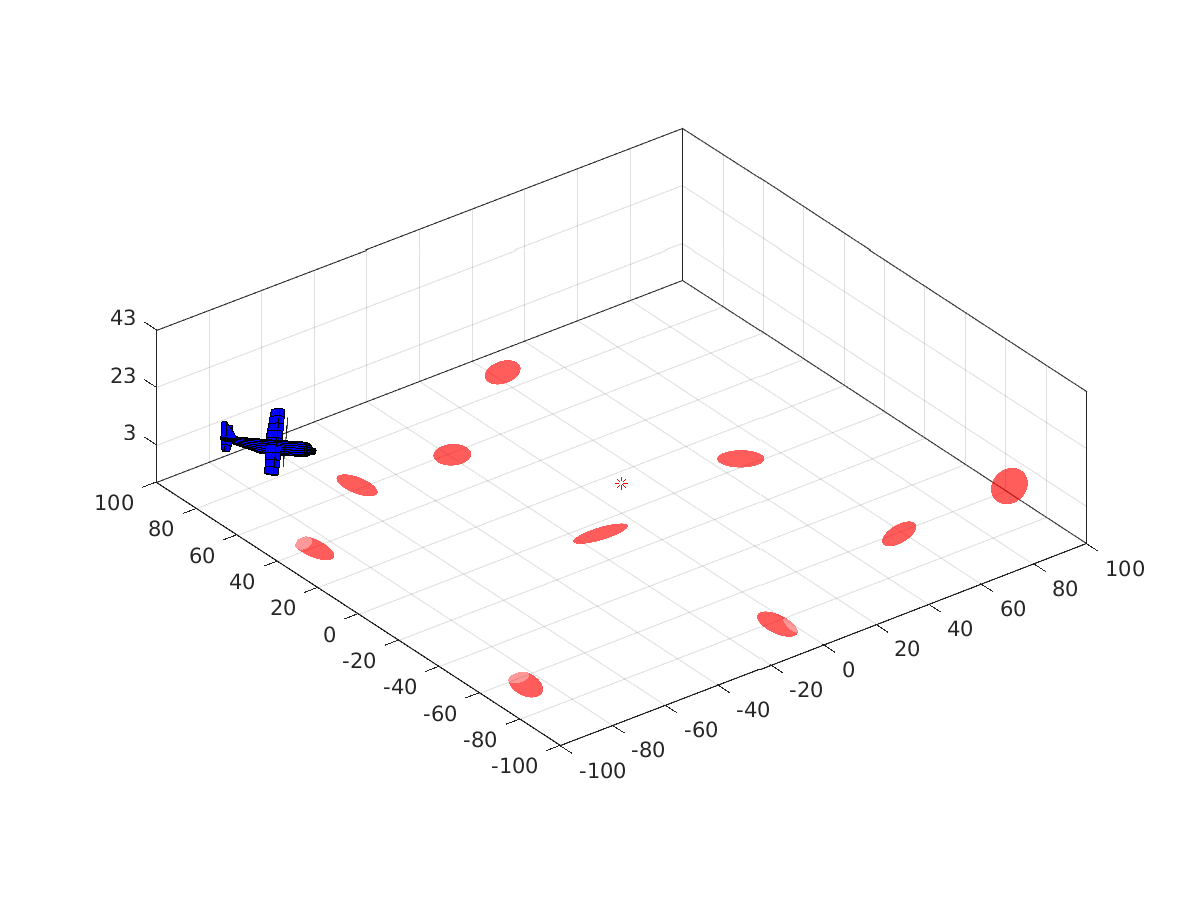} \\
(b) unmanned aerial robot 
\end{minipage}
\end{tabular}
\end{minipage}%
\vspace{-1mm}
\caption{\label{fig:applications}
Examples of applications of the proposed sensing-constrained \LQG control framework:
(a) {sensing-constrained formation control}  and 
(b) {resource-constrained robot navigation}. 
}
\end{figure}

\section{Sensing-constrained formation control}\label{sec:exp-formationControl}

\myParagraph{Simulation setup}  The first application scenario is illustrated in \myFigure{fig:applications}(a).
A team of $\nrRobots$ agents (blue triangles) moves in a 2D scenario.
At time $t=1$, the agents are randomly deployed in a $10\rm{m} \times 10\rm{m}$ square and 
their objective is to reach a target formation shape 
 (red stars); in the example of \myFigure{fig:applications}(a) the desired
 formation has an hexagonal shape, while in general for a formation of $\nrRobots$, the desired 
 formation is an equilateral polygon with $\nrRobots$ vertices. 
 Each robot is modeled as a double-integrator, with state $x_i = [p_i \; v_i]\tran \in \Real{4}$  
 ($p_i$ is the 2D position of agent $i$, while $v_i$ is its velocity), and can control its own acceleration 
 $u_i \in \Real{2}$; the process noise is chosen as a diagonal matrix $W = \diag{[1e^{-2}, \; 1e^{-2}, \; 1e^{-4}, \; 1e^{-4}]}$. 
 Each robot $i$ is equipped with a GPS receiver, which can measure the agent position $p_i$ with a covariance 
 $V_{gps,i} = 2\cdot\eye_2$. Moreover, the agents are equipped with lidar sensors allowing 
 each agent~$i$ to measure the relative position of another agent~$j$ with covariance 
 $V_{lidar,ij} = 0.1\cdot\eye_2$. 
 The agents have very limited on-board resources, hence they can only activate a subset of $k$ sensors.
Hence, the goal is to select the subset of $k$ sensors, as well as to compute the control policy 
that ensure best tracking performance, as measured by the~\LQG~objective.

For our tests, we consider two problem setups. In the first setup, named \emph{homogeneous formation control}, 
the \LQG weigh matrix $Q$ is a block diagonal matrix with $4\times 4$ blocks, 
with each block $i$ chosen as $Q_i = 0.1 \cdot\eye_4$;
since each $4\times 4$ block of~$Q$ weights the tracking error of a robot, in the homogeneous case
the tracking error of all agents is equally important.
In~the second setup, named \emph{heterogeneous formation control}, the matrix $Q$ 
is chose as above, except for one of the agents, say robot 1, for which we choose 
$Q_1 = 10 \cdot\eye_4$; this setup models the case in which 
each agent has a different role or importance, hence one weights differently the 
tracking error of the agents. In~both cases the matrix $R$ is chosen to be the identity matrix. 
The simulation is carried on over $T$ time steps, and $T$ is also chosen as \LQG horizon.
Results are averaged over 100 Monte Carlo runs: at each run we randomize the 
initial estimation covariance {$\initialCovariance$}.

\myParagraph{Compared techniques}
We compare five techniques. All techniques use an \LQG-based estimator and controller, and they 
only differ by the selections of the sensors used.
The~first approach is the optimal sensor selection, denoted as \toptimal, which 
attains the minimum of the cost function in eq.~\eqref{eq:opt_sensors}, and that we compute by enumerating all 
possible subsets;  this brute-force approach is only viable when the number of available sensors is small.
The second approach is a pseudo-random sensor selection, denoted as \trandom, 
 which selects all the GPS measurements and a random subset of the lidar measurements; 
 note that we do not consider a fully random selection since in practice this 
 often leads to an unobservable system, hence causing divergence of the \LQG cost.
The third approach, denoted as \tlogdet, selects sensors so to minimize the average $\logdet$ of the estimation covariance 
over the horizon; this approach resembles~\cite{jawaid2015submodularity} and is agnostic to the control task.
The fourth approach is the proposed sensor selection strategy, described in Algorithm~\ref{alg:greedy}, and is denoted as \tslqg.
Finally, we also report the \LQG performance when all sensors are selected.
This approach is denoted as \tallSensors.

\myParagraph{Results}
The results of our numerical analysis are reported in \myFigure{fig:formationControlStats}.
When not specified otherwise, we consider a formation of $\nrRobots = 4$ agents, 
which can only use a total of $k=6$ sensors, and a control horizon $T=20$. 
\myFigure{fig:formationControlStats}(a) shows the \LQG cost attained by the compared techniques for increasing 
control horizon and for the homogeneous case. 
We note that, in all tested instance, the proposed approach \tslqg matches the optimal selection \toptimal, and both 
approaches are relatively close to \tallSensors, which selects all the 
 available sensors ($\frac{\nrRobots + \nrRobots^2}{2}$). On the other hand \tlogdet leads to worse tracking 
 performance, and it is often close to the pseudo-random selection \trandom.
These considerations are confirmed by the heterogeneous setup, shown in \myFigure{fig:formationControlStats}(b).
In this case the separation between the proposed approach and \tlogdet becomes even larger;
 the intuition here is that the heterogeneous case rewards differently the tracking errors at different agents, 
 hence while \tlogdet attempts to equally reduce the estimation error across the formation, the proposed approach 
 \tslqg selects sensors in a task-oriented fashion, since the matrices $\Theta_t$ for all $\allT$ in the cost function in eq.~\eqref{eq:opt_sensors}
 incorporate the \LQG weight matrices.

\myFigure{fig:formationControlStats}(c) shows the \LQG cost attained by the compared techniques for increasing 
number of selected sensors $k$ and for the homogeneous case.   
We note that for increasing number of sensors all techniques converge to \tallSensors (the entire ground set is selected).
As in the previous case, the proposed approach \tslqg matches the optimal selection \toptimal. 
\myFigure{fig:formationControlStats}(d) shows the same statistics for the heterogeneous case. 
We note that in this case \tlogdet is inferior to \tslqg even in the case with small $k$. 
Moreover, an interesting fact is that \tslqg matches \tallSensors already for $k = 7$, meaning that 
the \LQG performance of the sensing-constraint setup is indistinguishable from the 
one using all sensors; intuitively, in the heterogeneous case, adding more sensors may have 
marginal impact on the \LQG cost (e.g., if the cost rewards a small tracking error for robot 1, it may be 
of little value to take a lidar measurement between robot 3 and 4). This further stresses the importance of the 
proposed framework as a parsimonious way to control a system with minimal resources.

\myFigure{fig:formationControlStats}(e) 
and \myFigure{fig:formationControlStats}(f) show the \LQG cost attained by the compared techniques for increasing 
number of agents, in  the homogeneous and heterogeneous case, respectively.
To ensure observability, we consider $k = \round{1.5n}$, i.e., we select a number of sensors $50\%$ larger than the 
smallest set of sensors that can make the system observable.
We note that \toptimal quickly becomes intractable to compute, hence we omit values beyond $\nrRobots = 4$.
In both figures, the main observation is that the separation among the techniques increases with the number of agents,
since the set of available sensors quickly increases with $n$. Interestingly, in the heterogeneous case  
\tslqg remains relatively close to \tallSensors, implying that for the purpose of \LQG control, using a cleverly selected
 small subset of  sensors still ensures excellent tracking performance.


\newcommand{\resultsFolderFormationControl}{results-formationControl-gpsAndRandom-100-randInitCov-newLQG}
\newcommand{\myhspace}{\hspace{-2mm}}

\renewcommand{\mpw}{4.5cm}
\begin{figure}[t]
\myhspace\myhspace
\begin{minipage}{\textwidth}
\begin{tabular}{cc}%
\myhspace
\begin{minipage}{\mpw}%
\centering
\includegraphics[width=1.05\columnwidth]{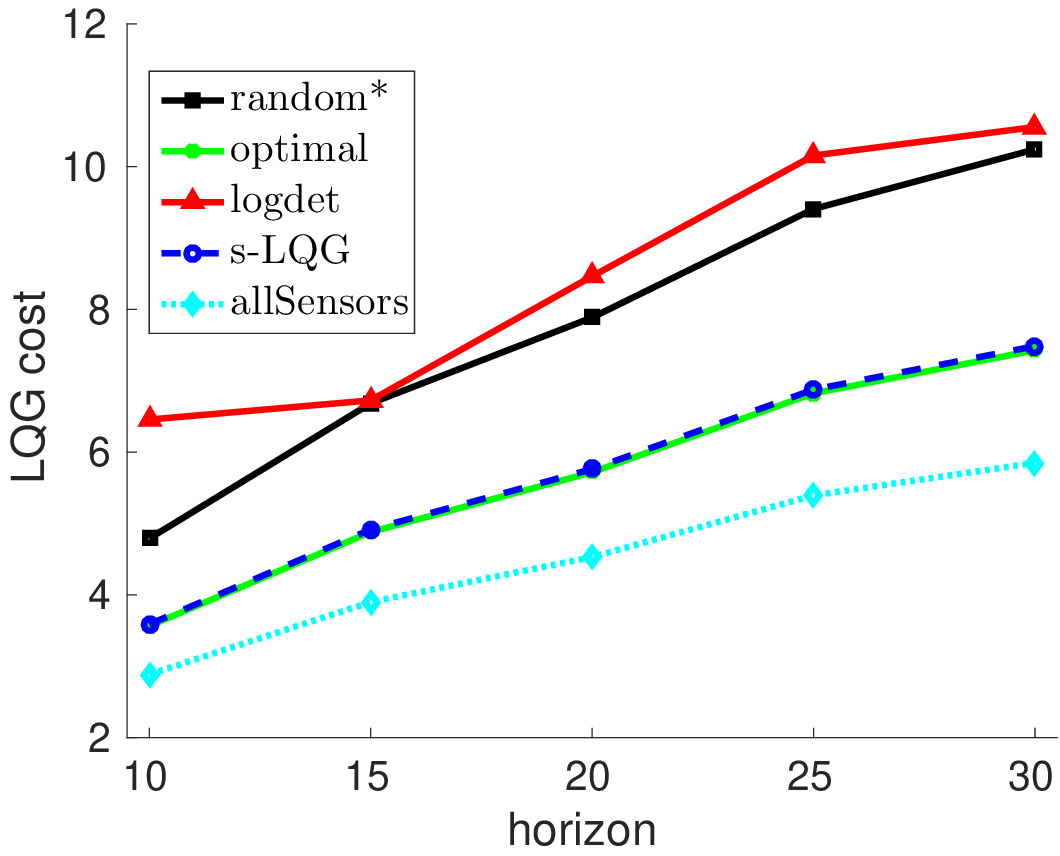} \\
(a) homogeneous 
\end{minipage}
& \myhspace
\begin{minipage}{\mpw}%
\centering%
\includegraphics[width=1.05\columnwidth]{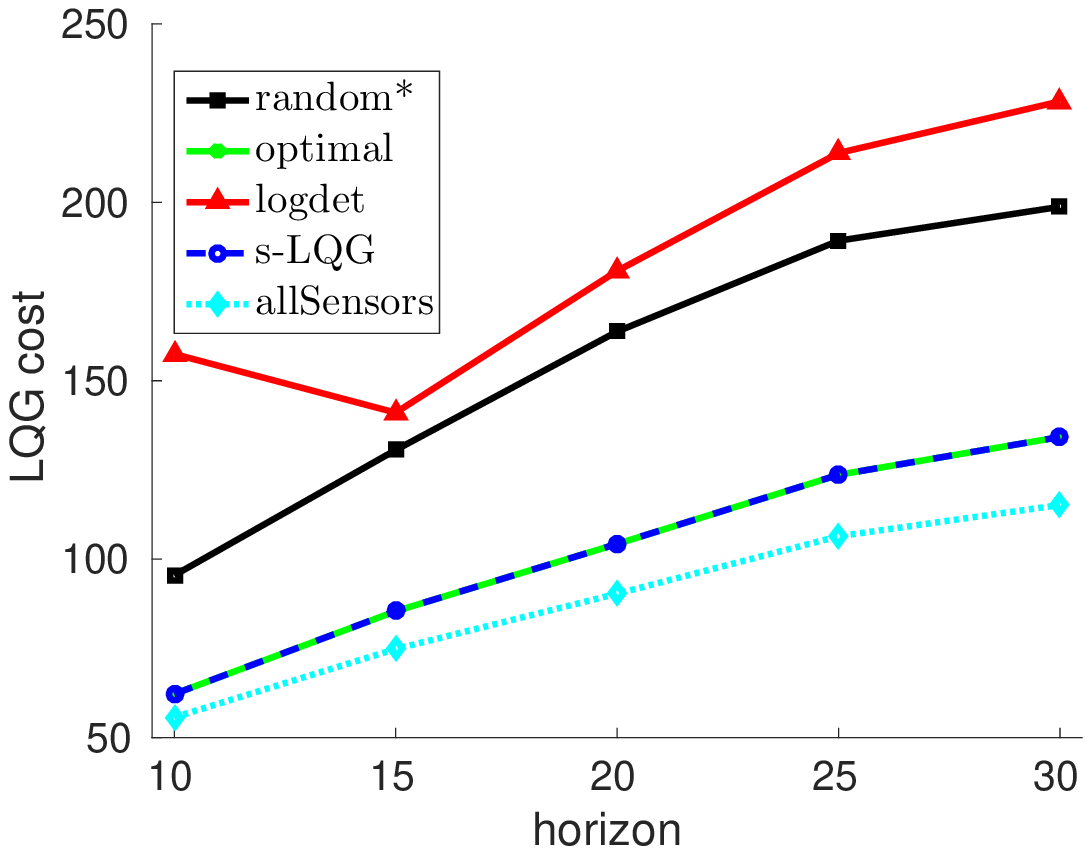} \\
(b) heterogeneous 
\end{minipage}
\\
\myhspace
\begin{minipage}{\mpw}%
\centering
\includegraphics[width=1.05\columnwidth]{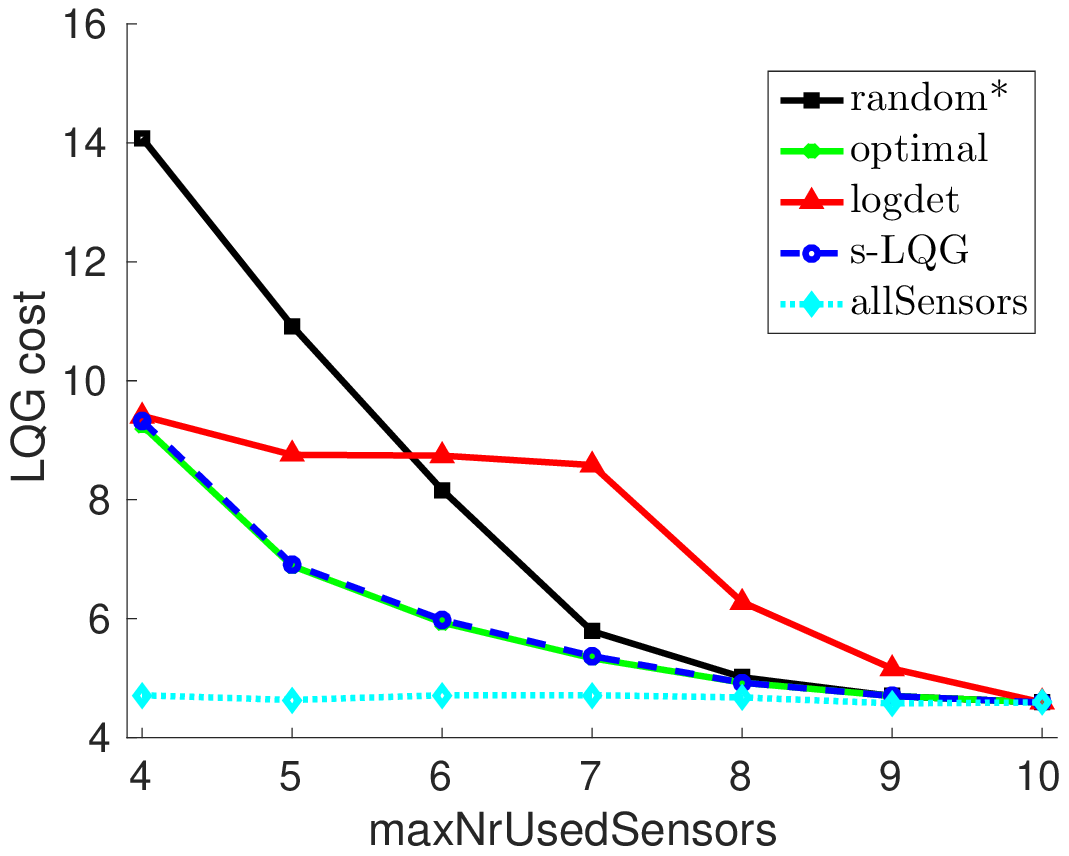} \\
(c) homogeneous 
\end{minipage}
& \myhspace
\begin{minipage}{\mpw}%
\centering%
\includegraphics[width=1.05\columnwidth]{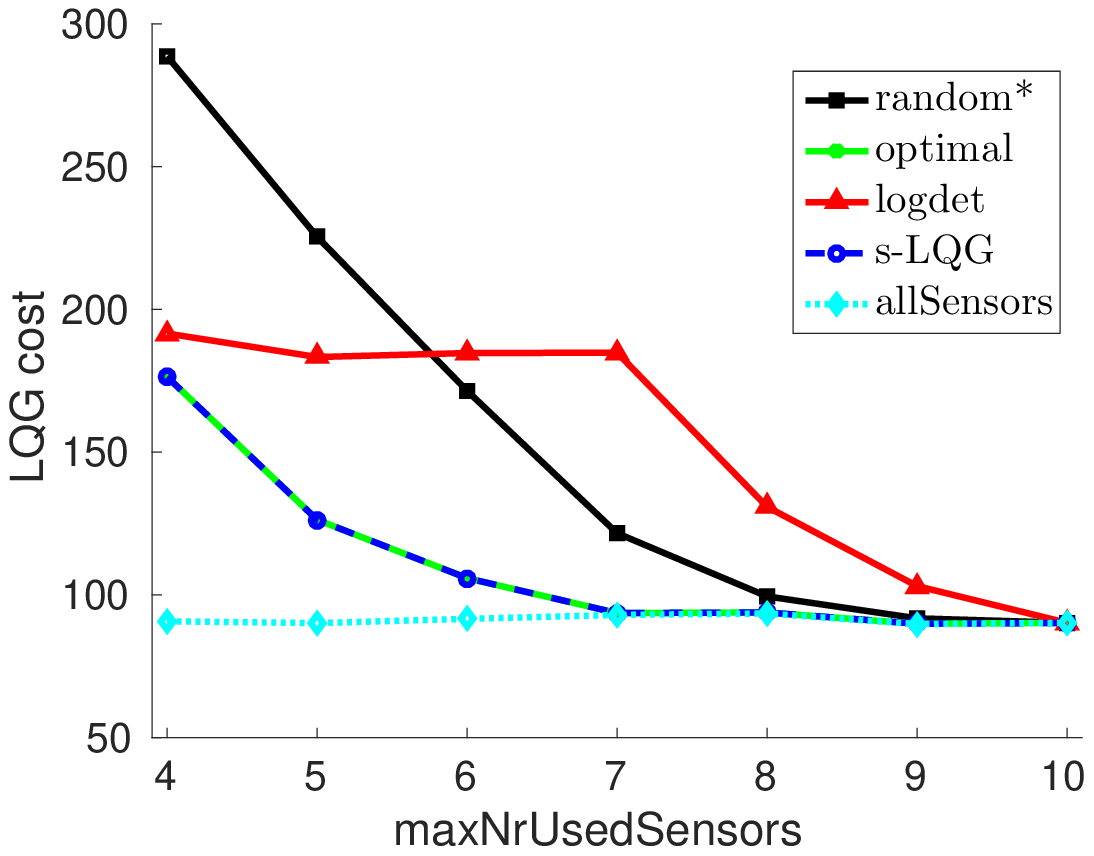} \\
(d) heterogeneous 
\end{minipage}
\\
\myhspace
\begin{minipage}{\mpw}%
\centering
\includegraphics[width=1.05\columnwidth]{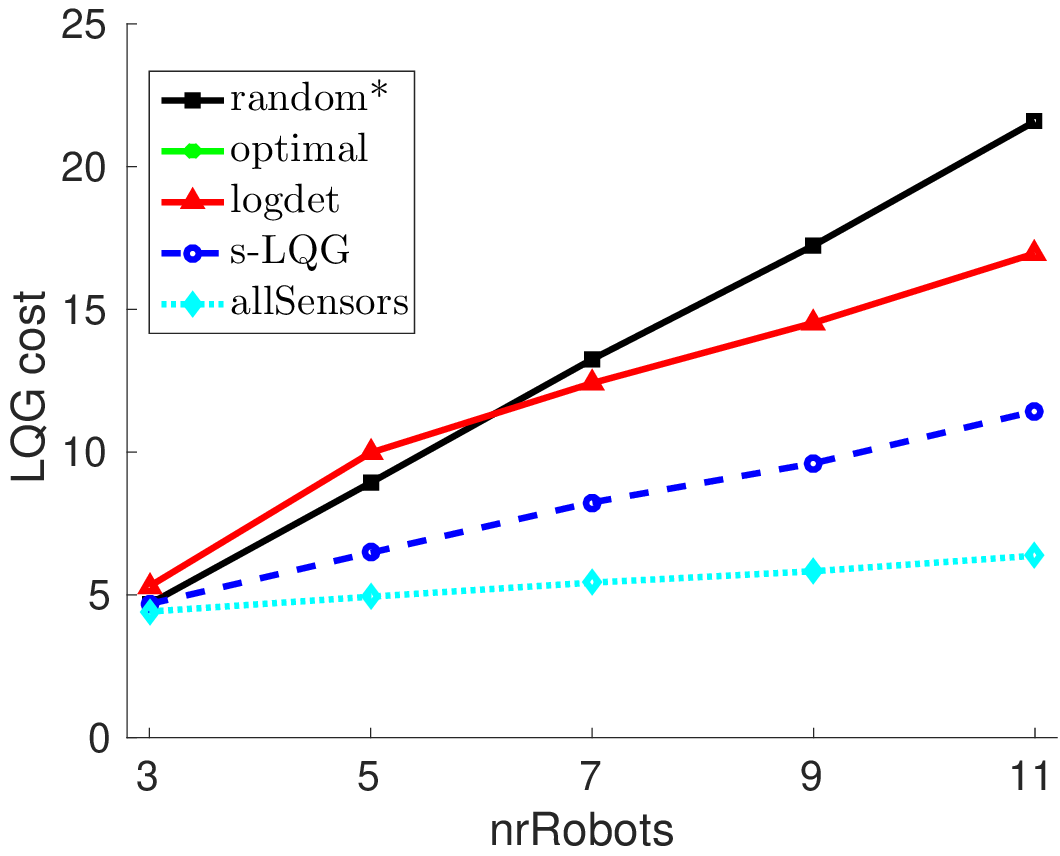} \\
(e) homogeneous 
\end{minipage}
& \myhspace
\begin{minipage}{\mpw}%
\centering%
\includegraphics[width=1.05\columnwidth]{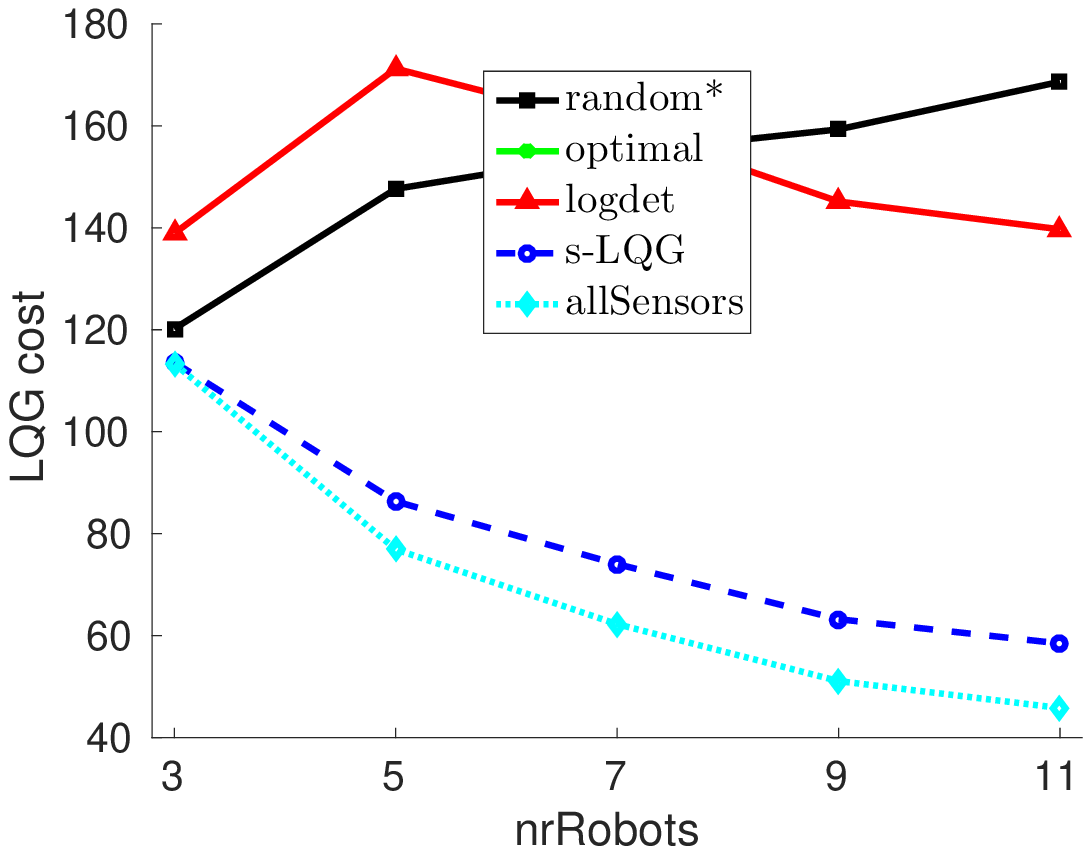} \\
(f) heterogeneous 
\end{minipage}
\end{tabular}
\end{minipage}%
\vspace{-1mm}
\caption{\label{fig:formationControlStats}
\LQG cost for increasing (a)-(b) control horizon $T$, (c)-(d) number of selected 
sensors $k$, and (e)-(f) number of agents $n$.  Statistics are reported for the homogeneous 
formation control setup (left column), and the heterogeneous setup (right column).
Results are averaged over 100 Monte Carlo runs.
}\vspace{-5mm}
\end{figure}

\section{Resource-constrained robot navigation}\label{sec:exp-flyingRobot}

\myParagraph{Simulation setup}  The second application scenario is illustrated in \myFigure{fig:applications}(b).
An unmanned aerial robot (UAV) moves in a 3D scenario, starting from a
randomly selected initial location. 
The objective of the UAV is to land, and more specifically, it has to reach 
the position $[0,\;0,\;0]$ with zero velocity. 
 The UAV is modeled as a double-integrator, with state $x_i = [p_i \; v_i]\tran \in \Real{6}$  
 ($p_i$ is the 3D position of agent $i$, while $v_i$ is its velocity), and can control its own acceleration 
 $u_i \in \Real{3}$; the process noise is chosen as $W = \eye_6$. 
 The UAV is equipped with multiple sensors. It has an on-board GPS receiver, measuring the 
 UAV position  $p_i$ with a covariance $2 \cdot\eye_3$, 
and an altimeter, measuring only the last component of $p_i$ (altitude) with standard deviation $0.5\rm{m}$. 
Moreover, the UAV 
can use a stereo camera to measure the relative position of $\ell$ landmarks on the ground;
for the sake of the numerical example, we assume the location of each landmark to be known 
only approximately, and we associate to each landmark an uncertainty covariance
(red ellipsoids in \myFigure{fig:applications}(b)), which is randomly generated at the 
beginning of each run.
 The UAV has limited on-board resources, hence it can only activate a subset of $k$ sensors.
For instance, the resource-constraints may be due to the power consumption of the GPS and the altimeter,
or may be due to computational constraints that prevent to run multiple object-detection algorithms to detect all landmarks on the ground. 
Similarly to the previous case, we~phrase the problem as a  sensing-constraint \LQG problem, 
and we use $Q = \diag{[1e^{-3},\; 1e^{-3},\;10,\; 1e^{-3},\; 1e^{-3},\; 10]}$ 
and $R = \eye_3$. Note that the structure of $Q$ reflects the fact that during landing 
we are particularly interested in controlling the vertical direction and the vertical velocity 
(entries with larger weight in $Q$), while we are less interested in controlling accurately the 
horizontal position and velocity (assuming a sufficiently large landing site).
In the following, we present results averaged over 100 Monte Carlo runs: in each run, we~randomize the 
covariances describing the landmark position uncertainty.

\myParagraph{Compared techniques}
We consider the five techniques discussed in the previous section.
As in the formation control case, the pseudo-random selection \trandom 
always includes the GPS measurement (which alone ensures observability) 
and a random selection of the other available sensors.

\myParagraph{Results}
The results of our numerical analysis are reported in \myFigure{fig:FlyingRobotStats}.
When not specified otherwise, we consider a total of $k=3$ sensors to be selected, and a control horizon $T=20$. 
\myFigure{fig:FlyingRobotStats}(a) shows the \LQG cost attained by the compared techniques for increasing 
control horizon. For visualization purposes we plot the cost normalized by the horizon, which makes more visible 
the differences among the techniques. Similarly to the formation control example, \tslqg matches the optimal selection \toptimal, 
while \tlogdet and \trandom have suboptimal performance. 

\myFigure{fig:FlyingRobotStats}(b) shows the \LQG cost attained by the compared techniques for increasing 
number of selected sensors $k$. Clearly, all techniques converge to \tallSensors for increasing $k$, but in the 
regime in which few sensors are used \tslqg still outperforms alternative sensor selection schemes, and matches in all cases 
the optimal selection \toptimal.


\newcommand{\resultsFolderFlyingRobot}{results-flyingRobot-gpsAndRandom-100-randInitCov-newLQG}
\renewcommand{\myhspace}{\hspace{-2mm}}

\renewcommand{\mpw}{4.5cm}
\begin{figure}[t]
\myhspace
\begin{minipage}{\textwidth}
\begin{tabular}{cc}%
\myhspace
\begin{minipage}{\mpw}%
\centering
\includegraphics[width=1.05\columnwidth]{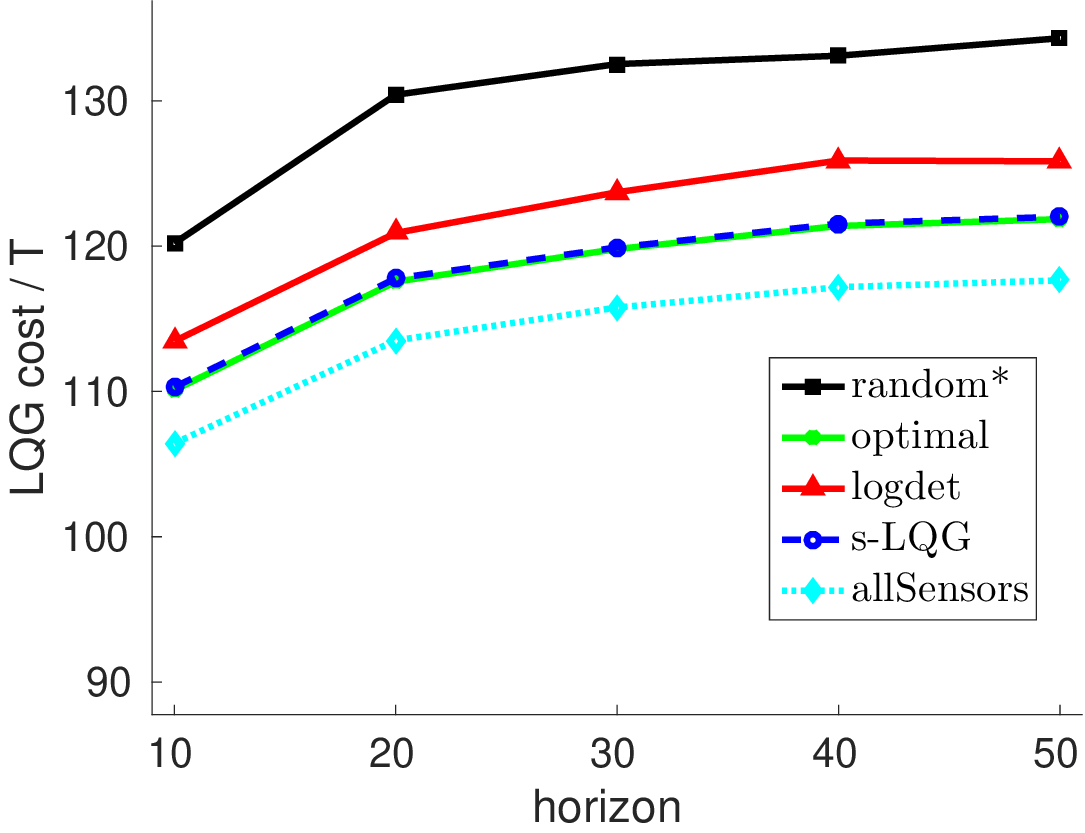} \\
(a) homogeneous 
\end{minipage}
& \myhspace
\begin{minipage}{\mpw}%
\centering%
\includegraphics[width=1.05\columnwidth]{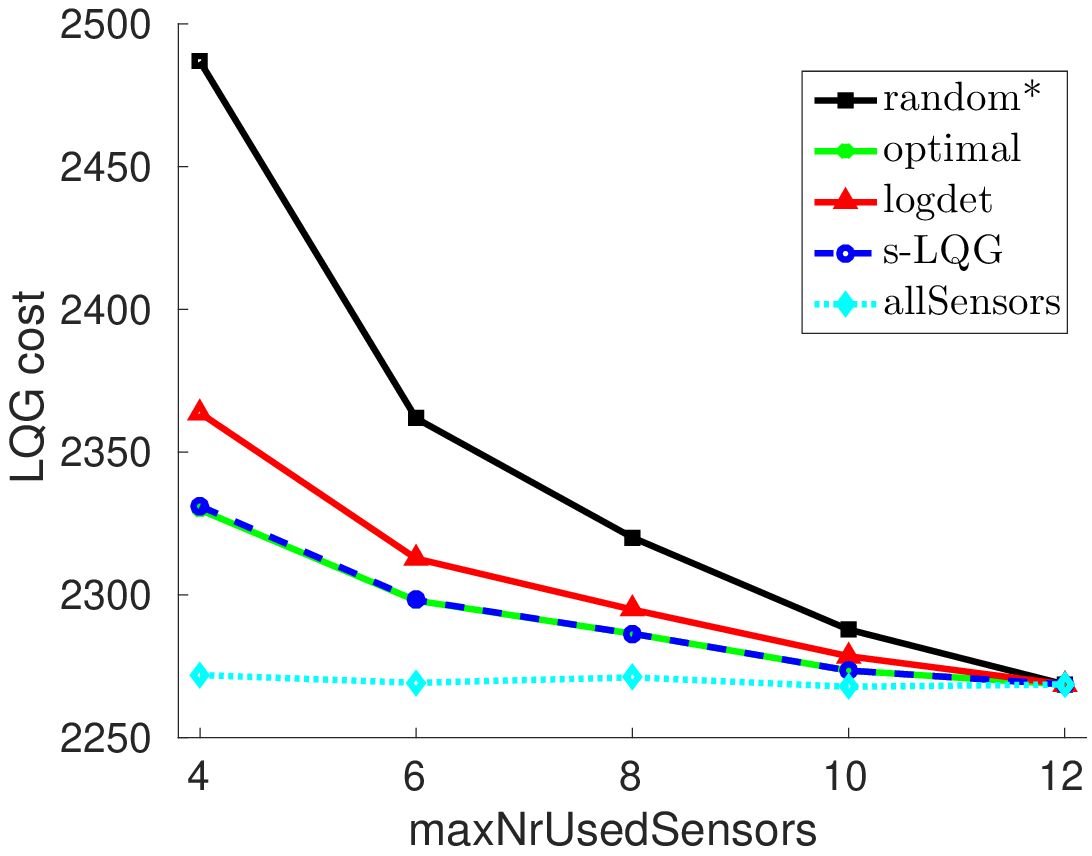} \\
(b) heterogeneous 
\end{minipage}
\end{tabular}
\end{minipage}%
\vspace{-1mm}
\caption{\label{fig:FlyingRobotStats}
\LQG cost for increasing (a) control horizon $T$, and (b) number of selected 
sensors $k$.  Statistics are reported for the heterogeneous setup.
Results are averaged over 100 Monte Carlo runs.
}\vspace{-5mm}
\end{figure}

\section{Concluding Remarks}\label{sec:con}

In this paper, we introduced the \emph{sensing-constrained \LQG control} Problem~\ref{prob:LQG}, 
which is central in modern control applications that range from large-scale networked systems to miniaturized robotics networks.
While the computation of the optimal sensing strategy is intractable, 
We provided the first scalable algorithm for Problem~\ref{prob:LQG}, Algorithm~\ref{alg:overall}, and under mild conditions on the system and LQG matrices, proved that Algorithm~\ref{alg:overall} computes a near-optimal sensing strategy with provable 
sub-optimality guarantees.  
To this end, we~showed that a separation principle holds, which allows the design of sensing, estimation, and control policies in isolation. 
We~motivated the importance of the sensing-constrained \LQG Problem~\ref{prob:LQG}, and 
demonstrated the effectiveness of Algorithm~\ref{alg:overall}, by considering  
two application scenarios:
\emph{sensing-constrained formation control}, and \emph{resource-constrained robot navigation}.


\appendices 


\section*{Appendix A: Preliminary facts}

This appendix contains a set of lemmata that will be used to support the proofs in this paper (Appendices B--F).

\begin{mylemma}[\hspace{-0.05mm}{\cite[Proposition 8.5.5]{bernstein2005matrix}}]\label{lem:inverse}
Consider two positive definite matrices $M_1$ and $M_2$. If $M_1\preceq M_2$ then $M_2\inv\preceq M_1\inv\!\!$.
\end{mylemma}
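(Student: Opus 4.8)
The plan is to establish this anti-monotonicity of matrix inversion on the positive-definite cone by a single congruence transformation followed by an elementary spectral argument, making the proof self-contained rather than merely citing~\cite[Proposition~8.5.5]{bernstein2005matrix}. First I would invoke that $M_1\succ 0$ admits a unique symmetric positive-definite square root $M_1^{1/2}$, which is invertible with inverse $M_1^{-1/2}$.

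Starting from the hypothesis $M_1\preceq M_2$, i.e.\ $M_2-M_1\succeq 0$, I would conjugate this inequality by the congruence map $X\mapsto M_1^{-1/2}XM_1^{-1/2}$. Since congruence by an invertible matrix preserves the positive-semidefinite ordering, this yields $\eye\preceq M_1^{-1/2}M_2M_1^{-1/2}=:P$, where $P$ is symmetric positive definite. The next step passes to the inverse by a spectral argument: $P\succeq\eye$ forces every eigenvalue of $P$ to be at least $1$, and since $P$ is symmetric positive definite its inverse shares the same orthonormal eigenbasis with reciprocal eigenvalues, all of which are therefore at most $1$; hence $P\inv\preceq\eye$. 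Finally, computing $P\inv=M_1^{1/2}M_2\inv M_1^{1/2}$ and applying the congruence $X\mapsto M_1^{-1/2}XM_1^{-1/2}$ once more to $P\inv\preceq\eye$ produces $M_2\inv\preceq M_1\inv$, which is the claim.

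The argument is entirely routine and I expect no substantive obstacle; the only points requiring a word of care are the two auxiliary facts, each of which is immediate. That congruence preserves $\preceq$ follows from the defining characterization of the ordering: for any $w$, setting $v=M_1^{-1/2}w$ gives $w\tran(P-\eye)w=v\tran(M_2-M_1)v\geq 0$. That inversion reverses the eigenvalue inequality is justified because a symmetric positive-definite matrix and its inverse are simultaneously diagonalizable, so $\lambda\geq 1$ for every eigenvalue $\lambda$ of $P$ is equivalent to $1/\lambda\leq 1$ for every eigenvalue of $P\inv$. I would therefore keep the written proof to these few lines.
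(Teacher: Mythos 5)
Your proof is correct, but there is nothing in the paper to compare it against: the paper never proves this lemma, stating it purely as an imported fact with a citation to \cite[Proposition 8.5.5]{bernstein2005matrix} (it is Lemma~\ref{lem:inverse} in Appendix~A, used repeatedly in the monotonicity arguments of Appendices~C and~D). Your congruence-plus-spectral argument is the standard textbook route, and every step checks out: congruence by the invertible symmetric factor $M_1^{-1/2}$ preserves the Loewner order (your verification $w\tran(P-\eye)w=v\tran(M_2-M_1)v\geq 0$ with $v=M_1^{-1/2}w$ is exactly right); $P\succeq\eye$ pins all eigenvalues of $P$ at or above $1$; inversion of the symmetric positive definite $P$ reciprocates the eigenvalues in the same orthonormal eigenbasis, giving $P\inv\preceq\eye$; and since $P\inv=M_1^{1/2}M_2\inv M_1^{1/2}$, one further congruence by $M_1^{-1/2}$ yields $M_2\inv\preceq M_1\inv$. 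What your version buys is self-containedness, at the cost of a few lines the authors chose to outsource. Two minor points of hygiene: make explicit that ``positive definite'' here means \emph{symmetric} positive definite (the convention in \cite{bernstein2005matrix} and throughout this paper, where the lemma is applied to covariance and information matrices), since your spectral step uses the spectral theorem for $P$; and note that only $M_1\succ 0$ is needed for the square-root construction, with $M_2\succ 0$ entering solely to guarantee $M_2\inv$ exists and $P$ is invertible --- your proof in fact shows the map $M\mapsto M\inv$ is order-reversing on the entire positive definite cone, which is the full strength of the cited proposition.
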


\begin{mylemma}[Trace inequality~{\cite[Proposition~8.4.13]{bernstein2005matrix}}]\label{lem:trace_low_bound_lambda_min}
Consider a symmetric matrix $A$, and a positive semi-definite matrix $B$ of appropriate dimension. 
Then,
\begin{equation*}
\lambda_\min(A)\trace{B}\leq \trace{AB}\leq \lambda_\max(A)\trace{B}.
\end{equation*}
\end{mylemma}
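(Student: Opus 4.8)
The plan is to diagonalize the symmetric matrix $A$ and reduce the claim to an elementary statement about the diagonal entries of a positive semi-definite matrix. Since $A$ is symmetric, the spectral theorem provides an orthogonal matrix $U$ and a diagonal matrix $\Lambda = \diag{\lambda_1, \ldots, \lambda_n}$ collecting the eigenvalues of $A$ such that $A = U \Lambda U\tran$. Substituting this into $\trace{AB}$ and invoking the cyclic invariance of the trace, I would rewrite the quantity of interest as $\trace{AB} = \trace{U\Lambda U\tran B} = \trace{\Lambda\, U\tran B U}$.

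Next I would define $\tilde B \triangleq U\tran B U$ and record two facts. First, because $U$ is orthogonal and $B$ is positive semi-definite, the congruence $\tilde B = U\tran B U$ is again positive semi-definite; in particular its diagonal entries satisfy $\tilde B_{ii} = u_i\tran B\, u_i \geq 0$ for every $i$, where $u_i$ denotes the $i$-th column of $U$. Second, orthogonal similarity preserves the trace, so $\trace{\tilde B} = \trace{B}$. With these in hand, expanding the trace of the product of the diagonal matrix $\Lambda$ with $\tilde B$ yields $\trace{\Lambda \tilde B} = \sum_{i=1}^{n} \lambda_i\, \tilde B_{ii}$.

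The inequalities then follow termwise: since each $\tilde B_{ii} \geq 0$, every summand obeys $\lambda_\min(A)\,\tilde B_{ii} \leq \lambda_i\, \tilde B_{ii} \leq \lambda_\max(A)\,\tilde B_{ii}$. Summing over $i$ and using $\sum_{i=1}^{n} \tilde B_{ii} = \trace{\tilde B} = \trace{B}$ gives $\lambda_\min(A)\trace{B} \leq \trace{AB} \leq \lambda_\max(A)\trace{B}$, which is the claim. I do not anticipate a genuine obstacle, as the argument is elementary; the only point requiring care is the nonnegativity of the diagonal entries $\tilde B_{ii}$, which is precisely where the positive semi-definiteness hypothesis on $B$ is used. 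Without it the termwise bound fails and the inequalities can be violated, so this is the single step on which the whole lemma hinges.
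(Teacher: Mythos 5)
Your proof is correct and complete. Note that there is no in-paper argument to compare against: the paper states this lemma purely as an imported result, citing \cite[Proposition~8.4.13]{bernstein2005matrix} without proof, so your contribution is a self-contained derivation of a fact the authors take as given. The argument you give is the standard one and is sound at every step: writing $A = U\Lambda U\tran$ by the spectral theorem, using cyclicity of the trace to get $\trace{AB} = \trace{\Lambda\, U\tran B U}$, observing that $\tilde B = U\tran B U$ is positive semi-definite with $\tilde B_{ii} = u_i\tran B u_i \geq 0$ and $\trace{\tilde B} = \trace{B}$, and then bounding termwise via $\lambda_\min(A)\,\tilde B_{ii} \leq \lambda_i\,\tilde B_{ii} \leq \lambda_\max(A)\,\tilde B_{ii}$. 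You are also right that the nonnegativity of the diagonal entries $\tilde B_{ii}$ is exactly where positive semi-definiteness of $B$ enters; without it the termwise comparison fails (e.g., take $A$ with mixed-sign eigenvalues and $B$ indefinite), so your identification of the crux is accurate.
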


\begin{mylemma}[Woodbury identity~{\cite[Corollary 2.8.8]{bernstein2005matrix}}]\label{lem:woodbury}
Consider matrices $A$, $C$, $U$ and $V$ of appropriate dimensions, such that $A$, $C$, and $A+UCV$ are invertible. Then,
\begin{equation*}
(A+UCV)\inv=A\inv-A\inv U(C\inv+VA\inv U)\inv VA\inv\!\!.
\end{equation*}
\end{mylemma}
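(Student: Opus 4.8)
The plan is to prove the identity by \emph{direct verification}: setting $X \triangleq A\inv - A\inv U(C\inv+VA\inv U)\inv VA\inv$, I will show that $(A+UCV)X = I$. Since $A+UCV$ is assumed invertible, it possesses a two-sided inverse, so exhibiting a right inverse already identifies $X$ with $(A+UCV)\inv$. Before the computation I must confirm that the inner factor $(C\inv+VA\inv U)\inv$ is well defined: the outer factors $A\inv,C\inv$ exist by hypothesis, and the invertibility of $S \triangleq C\inv+VA\inv U$ follows from the three standing assumptions through the determinant identity $\det(A+UCV)=\det(A)\det(C)\det(S)$ (a consequence of Sylvester's theorem), which forces $\det(S)\neq 0$.

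First I would expand $(A+UCV)X$ term by term, writing the inner factor as $S\inv$, to obtain
\begin{equation*}
(A+UCV)X = I - US\inv VA\inv + UCVA\inv - UCVA\inv U S\inv VA\inv.
\end{equation*}
Next I would factor $U$ on the left and $VA\inv$ on the right out of the three non-identity terms, which reduces the claim to showing that the bracketed matrix $C - S\inv - CVA\inv U\,S\inv$ vanishes.

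The key algebraic step is the substitution $VA\inv U = S - C\inv$, which is merely the definition of $S$ rearranged. This yields $CVA\inv U = C(S-C\inv)=CS-I$, and hence $CVA\inv U\,S\inv=(CS-I)S\inv=C-S\inv$. Substituting back, the bracket collapses to $C - S\inv - (C-S\inv)=0$, so $(A+UCV)X=I$ and the identity is established.

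The main (indeed only) obstacle is bookkeeping: the matrices do not commute, so I must keep every product in its correct order throughout the expansion and the factoring, and must never cancel across a noncommuting factor. There is no analytic content. As a cleaner conceptual alternative I could compute the $(1,1)$ block of the inverse of the bordered matrix $\left(\begin{smallmatrix} A & U \\ V & -C\inv\end{smallmatrix}\right)$ in two ways: eliminating the $(2,2)$ block first gives $(A+UCV)\inv$ through its Schur complement, while eliminating the $(1,1)$ block first gives exactly $X$; equating the two expressions proves the lemma, at the cost of first invoking the block-matrix inversion formula.
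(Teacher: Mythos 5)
Your proof is correct and complete. Note, however, that the paper does not prove this lemma at all: it is imported verbatim as a preliminary fact from \cite[Corollary~2.8.8]{bernstein2005matrix} in Appendix~A, so there is no in-paper argument to compare against; you have supplied a proof where the authors supply a citation. Your direct verification is sound: with $S \triangleq C\inv + VA\inv U$ and $X \triangleq A\inv - A\inv U S\inv V A\inv$, the expansion $(A+UCV)X = I + U\left[\,C - S\inv - CVA\inv U S\inv\,\right]VA\inv$ is right, and the substitution $VA\inv U = S - C\inv$ collapses the bracket to zero, so $X$ is a right inverse and hence, by the assumed invertibility of $A+UCV$ (or simply because a square matrix with a right inverse is invertible), the two-sided inverse. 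You also correctly handled the one genuine subtlety, which a careless verification would skip: the hypotheses do not assume $S$ invertible, and your appeal to $\det(A+UCV) = \det(A)\det(C)\det(C\inv + VA\inv U)$ — valid via $A+UCV = A(I + A\inv U\,CV)$ and Sylvester's identity $\det(I+XY)=\det(I+YX)$ for rectangular $X,Y$ — is exactly what is needed to make the displayed formula well defined. Your alternative via the two Schur-complement eliminations of the bordered matrix $\left(\begin{smallmatrix} A & U \\ V & -C\inv \end{smallmatrix}\right)$ is likewise a standard and valid route, trading the bookkeeping of the expansion for the block-inversion formula. No gaps.
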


\begin{mylemma}[\hspace{-0.05mm}{\cite[Proposition 8.5.12]{bernstein2005matrix}}]\label{lem:traceAB_mon}
Consider two symmetric matrices $A_1$ and $A_2$, and a positive semi-definite matrix $B$. If~$A_1\preceq A_2$, then $\trace{A_1B}\leq \trace{A_2B}$.  
\end{mylemma}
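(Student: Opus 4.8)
The plan is to reduce the claim to the single fact that the trace of a product of two positive semi-definite matrices is non-negative. By linearity of the trace, $\trace{A_2 B} - \trace{A_1 B} = \trace{(A_2 - A_1)B}$, so I would set $D \triangleq A_2 - A_1$ and aim to prove $\trace{D B} \geq 0$. The hypothesis $A_1 \preceq A_2$ is precisely the statement that $D \succeq 0$, so both $D$ and $B$ are positive semi-definite, and the entire problem collapses to showing $\trace{DB}\geq 0$ for two positive semi-definite matrices.

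To establish $\trace{DB} \geq 0$, the cleanest route reuses the trace inequality already recorded in this appendix (Lemma~\ref{lem:trace_low_bound_lambda_min}): applied to the symmetric matrix $D$ and the positive semi-definite matrix $B$ it gives $\trace{DB} \geq \lambda_\min(D)\,\trace{B}$. Since $D \succeq 0$ we have $\lambda_\min(D) \geq 0$, and since $B \succeq 0$ its diagonal entries, and hence its trace, are non-negative, so $\trace{B} \geq 0$; the product of these two non-negative quantities is non-negative, giving $\trace{DB} \geq 0$. A self-contained alternative, avoiding the earlier lemma, is to write $B = B^{1/2} B^{1/2}$ with $B^{1/2}$ the symmetric positive semi-definite square root and invoke the cyclic property of the trace: $\trace{DB} = \trace{B^{1/2} D B^{1/2}}$. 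The matrix $B^{1/2} D B^{1/2}$ is itself positive semi-definite, because $v\tran B^{1/2} D B^{1/2} v = (B^{1/2}v)\tran D (B^{1/2}v) \geq 0$ for every $v$, and the trace of a positive semi-definite matrix is the sum of its non-negative eigenvalues, hence non-negative.

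Chaining these together, $\trace{A_2 B} - \trace{A_1 B} = \trace{DB} \geq 0$, which is exactly the desired inequality $\trace{A_1 B} \leq \trace{A_2 B}$.

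I do not anticipate a genuinely hard part here: the statement is a routine monotonicity fact. The only point that warrants a word of care is that neither $A_1$ nor $A_2$ is assumed positive semi-definite individually — only their difference is — so the argument must be framed in terms of $D = A_2 - A_1 \succeq 0$ rather than in terms of $A_1$ or $A_2$ directly. Once the problem is posed as $\trace{DB}$ with $D \succeq 0$ and $B \succeq 0$, either of the two routes above closes it immediately, and the eigenvalue/cyclic-trace route makes clear that no invertibility or definiteness beyond the stated hypotheses is required.
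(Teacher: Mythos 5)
Your proof is correct, but there is nothing in the paper to compare it against: the paper does not prove this lemma at all, citing it directly as \cite[Proposition~8.5.12]{bernstein2005matrix} among the preliminary facts of Appendix~A. Both of your routes are sound. The reduction to $D \triangleq A_2 - A_1 \succeq 0$ is exactly right, and you are correct to flag that neither $A_1$ nor $A_2$ is individually assumed positive semi-definite. Your first route, invoking Lemma~\ref{lem:trace_low_bound_lambda_min} with the symmetric matrix $D$ and the PSD matrix $B$ to get $\trace{DB} \geq \lambda_\min(D)\trace{B} \geq 0$, is a legitimate application of that lemma's hypotheses and introduces no circularity, since within the paper both lemmata are independently cited facts (and in Bernstein's book Proposition~8.4.13 precedes 8.5.12). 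Your second route — writing $\trace{DB} = \trace{B^{1/2} D B^{1/2}}$ by cyclicity and observing that $B^{1/2} D B^{1/2} \succeq 0$ because $v\tran B^{1/2} D B^{1/2} v = (B^{1/2}v)\tran D (B^{1/2}v) \geq 0$ — is the standard self-contained textbook argument and is preferable as a proof from scratch, since it relies on nothing beyond the existence of the PSD square root and the non-negativity of the trace of a PSD matrix.
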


\begin{mylemma}[\hspace{-0.05mm}{\cite[Appendix~E]{bertsekas2005dynamic}}]\label{lem:covariance_riccati}
For any sensor set $\calS\subseteq\calV$, and for all $\allT$, let $\hat{x}\of{t}{\calS}$ be the Kalman estimator of the state $x_t$, i.e., $\hat{x}\of{t}{\calS}$, and $\Sigma\att{t}{t}(\calS)$ be $\hat{x}\of{t}{\calS}$'s error covariance, i.e.,
$\Sigma\att{t}{t}(\calS)\triangleq\mathbb{E}[(\hat{x}\of{t}{\calS}-x_t)(\hat{x}\of{t}{\calS}-x_t)\tran]$.  Then,  $\Sigma\att{t}{t}(\calS)$ is the solution of the Kalman filtering recursion
\beq\label{eq:covariance_riccati}
\begin{array}{rcl}
\Sigma\att{t}{t}(\calS) \!\!&\!\!=\!\!&\!\![ 
\Sigma\att{t}{t-1}(\calS)\inv + C_t(\calS)\tran V_t(\calS)\inv C_t(\calS)]\inv\!\!,\\
\Sigma\att{t+1}{t}(\calS) \!\!&\!\!=\!\!&\!\! A_{t} \Sigma\att{t}{t}(\calS) A_{t}\tran + W_{t},
\end{array}
\eeq
with boundary condition the $\initialCovariance(\calS)=\initialCovariance$. 
\end{mylemma}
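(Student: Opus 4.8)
The plan is to derive the recursion by the standard Kalman-filter argument, exploiting that the system in eq.~\eqref{eq:system} and the measurement model in eq.~\eqref{eq:activeSensors} are jointly Gaussian, so that the minimum-mean-square-error estimator $\hat{x}\of{t}{\calS}=\E(x_t\mid y\of{1}{\calS},\ldots,y\of{t}{\calS})$ is an affine function of the observations, and the estimation error $e\of{t}{\calS}\triangleq x_t-\hat{x}\of{t}{\calS}$ is a zero-mean Gaussian vector that is \emph{independent} of (hence uncorrelated with) the conditioning observations, with covariance exactly $\Sigma\att{t}{t}(\calS)$. I would proceed by induction over $t$, with the boundary condition $\initialCovariance(\calS)=\initialCovariance$ as the base case, splitting the inductive step into the usual two stages: a \emph{time update} producing $\Sigma\att{t+1}{t}(\calS)$ from $\Sigma\att{t}{t}(\calS)$, and a \emph{measurement update} producing $\Sigma\att{t}{t}(\calS)$ from $\Sigma\att{t}{t-1}(\calS)$.

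For the time update, I would use that the predicted estimate is $\hat{x}\att{t+1}{t}(\calS)=A_t\hat{x}\of{t}{\calS}+B_tu_t$, since $u_t$ is an admissible (hence deterministic given the past observations) control and $w_t$ is zero-mean and independent of those observations. Subtracting this from the dynamics in eq.~\eqref{eq:system} gives the prediction error $x\at{t+1}-\hat{x}\att{t+1}{t}(\calS)=A_t e\of{t}{\calS}+w_t$; because $w_t$ is independent of $e\of{t}{\calS}$ (as $w_t$ is independent of $x_1$, of the earlier process noises, and of all measurement noises), taking the outer product and expectation yields $\Sigma\att{t+1}{t}(\calS)=A_t\Sigma\att{t}{t}(\calS)A_t\tran+W_t$, which is the second line of eq.~\eqref{eq:covariance_riccati}.

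For the measurement update, I would treat $(x_t,y_t(\calS))$ as jointly Gaussian conditioned on the past, compute the conditional covariance of $x_t$ given the innovation $y_t(\calS)-C_t(\calS)\hat{x}\att{t}{t-1}(\calS)$ via the standard Gaussian conditioning (Schur-complement) formula, and obtain the gain form $\Sigma\att{t}{t}(\calS)=\Sigma\att{t}{t-1}(\calS)-\Sigma\att{t}{t-1}(\calS)C_t(\calS)\tran[C_t(\calS)\Sigma\att{t}{t-1}(\calS)C_t(\calS)\tran+V_t(\calS)]\inv C_t(\calS)\Sigma\att{t}{t-1}(\calS)$. The final step is to convert this gain form into the information form of the first line of eq.~\eqref{eq:covariance_riccati} by invoking the Woodbury identity (Lemma~\ref{lem:woodbury}) with $A=\Sigma\att{t}{t-1}(\calS)\inv$, $U=C_t(\calS)\tran$, $C=V_t(\calS)\inv$, and $V=C_t(\calS)$, giving $\Sigma\att{t}{t}(\calS)=[\Sigma\att{t}{t-1}(\calS)\inv+C_t(\calS)\tran V_t(\calS)\inv C_t(\calS)]\inv$; positive definiteness of $V_t(\calS)$, guaranteed since each $V_{i,t}$ is positive definite, ensures all inverses exist.

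The main obstacle is less in any single computation than in rigorously justifying the conditioning structure: that the innovation $y_t(\calS)-C_t(\calS)\hat{x}\att{t}{t-1}(\calS)$ is uncorrelated with the past observations, and that the error after the update is again independent of the full observation history, so that the induction hypothesis is restored and the recursion closes. Once this orthogonality of the innovations is in place, the two algebraic steps---the outer-product computation in the prediction and the Woodbury rewrite in the correction---are routine.
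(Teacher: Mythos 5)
Your proposal is correct and takes essentially the same route as the paper, which gives no proof of Lemma~\ref{lem:covariance_riccati} at all but quotes it from the standard Kalman-filtering treatment in \cite[Appendix~E]{bertsekas2005dynamic}: your induction with time and measurement updates via Gaussian conditioning on the innovations, followed by the Woodbury identity (Lemma~\ref{lem:woodbury}) to pass from the gain form to the information form of eq.~\eqref{eq:covariance_riccati}, is precisely that textbook argument. One minor caveat, shared with the paper's own statement of the lemma: the information form additionally presumes $\Sigma\att{t}{t-1}(\calS)$ is invertible (which follows, e.g., from $\initialCovariance \succ 0$ propagated through the recursion), so positive definiteness of $V_t(\calS)$ alone does not account for all the inverses, contrary to your closing remark.
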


\begin{mylemma}\label{prop:one-step_monotonicity} For any sensor set $\calS\subseteq \calV$, let $\Sigma\att{1}{1}(\calS)$ be defined as in eq.~\eqref{eq:covariance_riccati}, and consider two sensor sets $\calS_1,\calS_2 \subseteq \calV$. {If~$\calS_1\subseteq \calS_2$, then $\Sigma\att{1}{1}(\calS_1)\succeq \Sigma\att{1}{1}(\calS_2)$}.  
\end{mylemma}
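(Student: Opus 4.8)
The plan is to exploit the fact that at time $t=1$ the one-step-ahead predicted covariance equals the initial covariance $\initialCovariance$ and is therefore \emph{independent} of the sensor set, so that the entire $\calS$-dependence of $\Sigma\att{1}{1}(\calS)$ enters through a single additive information term. First I would specialize the Kalman recursion of Lemma~\ref{lem:covariance_riccati} to $t=1$, using the boundary condition $\Sigma\att{1}{0}(\calS)=\initialCovariance$, to obtain
\begin{equation*}
\Sigma\att{1}{1}(\calS)=\left[\initialCovariance\inv + C_1(\calS)\tran V_1(\calS)\inv C_1(\calS)\right]\inv.
\end{equation*}
Then, reading off the block structure of $C_1(\calS)$ and $V_1(\calS)$ from Definition~\ref{notation:sensor_noise_matrices}, I would observe that the information matrix decomposes as a sum of per-sensor contributions,
\begin{equation*}
C_1(\calS)\tran V_1(\calS)\inv C_1(\calS)=\sum_{i\in\calS} C_{i,1}\tran V_{i,1}\inv C_{i,1},
\end{equation*}
each summand being positive semi-definite because each noise covariance $V_{i,1}$ is positive definite.

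Next, since $\calS_1\subseteq \calS_2$, the sum over $\calS_2$ contains every term of the sum over $\calS_1$ together with the additional positive semi-definite terms indexed by $\calS_2\setminus\calS_1$; hence $C_1(\calS_1)\tran V_1(\calS_1)\inv C_1(\calS_1)\preceq C_1(\calS_2)\tran V_1(\calS_2)\inv C_1(\calS_2)$. Adding the common positive definite matrix $\initialCovariance\inv$ to both sides preserves this ordering and keeps both matrices positive definite, so writing $I_j\triangleq \initialCovariance\inv + C_1(\calS_j)\tran V_1(\calS_j)\inv C_1(\calS_j)$ for $j=1,2$ gives $0\prec I_1\preceq I_2$.

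Finally I would invoke Lemma~\ref{lem:inverse}, which states that inversion reverses the semi-definite order, to turn $I_1\preceq I_2$ into $I_2\inv\preceq I_1\inv$, i.e.\ $\Sigma\att{1}{1}(\calS_2)\preceq \Sigma\att{1}{1}(\calS_1)$, as claimed. I do not expect a genuine obstacle here: the only point requiring care is guaranteeing that both $I_1$ and $I_2$ are invertible so that Lemma~\ref{lem:inverse} is applicable, which follows immediately from $\initialCovariance\succ 0$ since $I_j\succeq \initialCovariance\inv\succ 0$. The conceptual crux is merely the additive, per-sensor decomposition of the information matrix, which renders monotonicity in the sensor set transparent; all remaining steps are standard manipulations in the positive semi-definite cone.
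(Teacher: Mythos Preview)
Your proposal is correct and follows essentially the same route as the paper: decompose $C_1(\calS)\tran V_1(\calS)\inv C_1(\calS)$ as the sum $\sum_{i\in\calS} C_{i,1}\tran V_{i,1}\inv C_{i,1}$, use $\calS_1\subseteq\calS_2$ to compare the two information matrices in the semi-definite order, and then invoke Lemma~\ref{lem:inverse} to reverse the inequality after inversion. Your discussion of invertibility is a bit more explicit than the paper's, but the argument is otherwise the same.
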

\begin{proof}[Proof of Lemma~\ref{prop:one-step_monotonicity}]
Let $\calD=\calS_2\setminus \calS_1$, and observe that for all $\allT$, the notation in Definition~\ref{notation:sensor_noise_matrices} implies
\begin{align}
\hspace{-0.15em} C_t(\calS_2)\tran V_t(\calS_2)\inv C_t(\calS_2)&=\sum_{i\in\calS_2}C_{i,t}\tran V_{i,t}C_{i,t}\nonumber\\ 
&= \sum_{i\in\calS_1}C_{i,t}\tran V_{i,t}C_{i,t}+\sum_{i\in\calD}C_{i,t}\tran V_{i,t}C_{i,t}\nonumber\\
&=\sum_{i\in\calS_1}C_{i,t}\tran V_{i,t}C_{i,t}\nonumber\\
&\succeq
 C_t(\calS_1)\tran V_t(\calS_1)\inv C_t(\calS_1).\label{eq:aux_1}
\end{align}
Therefore, Lemma~\ref{lem:inverse} and ineq.~\eqref{eq:aux_1} imply 
\belowdisplayskip =-12pt
\begin{align*}
&\Sigma\att{1}{1}(\calS_2) =[\Sigma\att{1}{0}\inv+C_1(\calS_2)\tran V_t(\calS_2)\inv C_t(\calS_2)]\inv\preceq\\
& [\Sigma\att{1}{0}\inv+C_1(\calS_1)\tran V_t(\calS_1)\inv C_t(\calS_1)]\inv=\Sigma\att{1}{1}(\calS_1).
\end{align*} 
\end{proof}





\begin{mylemma}\label{cor:from_t_to_t_plus_1}
Let $\Sigma\att{t}{t}$ be defined as in eq.~\eqref{eq:covariance_riccati} with boundary condition the $\Sigma\att{1}{0}$; similarly, let $\bar{\Sigma}\att{t}{t}$ be defined as in eq.~\eqref{eq:covariance_riccati} with boundary condition the $\bar{\Sigma}\att{1}{0}$.
If~$\Sigma\att{t}{t}\preceq\bar{\Sigma}\att{t}{t}$, then  $\Sigma\att{t+1}{t}\preceq\bar{\Sigma}\att{t+1}{t}$.  
\end{mylemma}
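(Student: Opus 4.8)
The plan is to work directly with the time-update (prediction) step of the Kalman recursion in eq.~\eqref{eq:covariance_riccati}, namely $\Sigma\att{t+1}{t} = A_t \Sigma\att{t}{t} A_t\tran + W_t$, and to exploit the fact that the additive process-noise term $W_t$ is common to both recursions. Since the two prediction updates share the same system matrix $A_t$ and the same $W_t$, subtracting them will make the $W_t$ contributions cancel, reducing the claim to a statement about how the map $M \mapsto A_t M A_t\tran$ acts on a positive semi-definite difference of covariances.

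First I would write out both prediction steps, $\Sigma\att{t+1}{t} = A_t \Sigma\att{t}{t} A_t\tran + W_t$ and $\bar{\Sigma}\att{t+1}{t} = A_t \bar{\Sigma}\att{t}{t} A_t\tran + W_t$, and subtract to obtain $\bar{\Sigma}\att{t+1}{t} - \Sigma\att{t+1}{t} = A_t (\bar{\Sigma}\att{t}{t} - \Sigma\att{t}{t}) A_t\tran$. The key step is then the elementary observation that congruence by $A_t$ preserves positive semi-definiteness: for any matrix $M \succeq 0$ and any conformable $A_t$, one has $A_t M A_t\tran \succeq 0$, since $v\tran A_t M A_t\tran v = (A_t\tran v)\tran M (A_t\tran v) \geq 0$ for every vector $v$. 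Applying this with $M = \bar{\Sigma}\att{t}{t} - \Sigma\att{t}{t}$, which is positive semi-definite by the hypothesis $\Sigma\att{t}{t} \preceq \bar{\Sigma}\att{t}{t}$, yields $\bar{\Sigma}\att{t+1}{t} - \Sigma\att{t+1}{t} \succeq 0$, i.e., $\Sigma\att{t+1}{t} \preceq \bar{\Sigma}\att{t+1}{t}$, as required.

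There is essentially no obstacle here; the result is elementary and a couple of lines suffice. The only point worth flagging is that $A_t$ is not assumed invertible, so the argument must rely on the congruence fact above rather than on the monotonicity-of-inverses Lemma~\ref{lem:inverse} invoked elsewhere (e.g., in the proof of Lemma~\ref{prop:one-step_monotonicity}). It is also worth noting that the measurement-update step plays no role in this lemma: the claim concerns purely how the prediction step propagates an ordering that is already assumed to hold at the filtered estimate $\Sigma\att{t}{t}$, which is why the hypothesis is stated on $\Sigma\att{t}{t}$ rather than on the boundary conditions $\Sigma\att{1}{0}, \bar{\Sigma}\att{1}{0}$ directly.
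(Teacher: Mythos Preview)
Your proposal is correct and follows essentially the same approach as the paper's proof: both apply the prediction step $\Sigma\att{t+1}{t} = A_t \Sigma\att{t}{t} A_t\tran + W_t$ and use that congruence by $A_t$ preserves the positive semi-definite ordering, with the common $W_t$ dropping out. Your version is slightly more explicit in justifying the congruence fact via the quadratic-form argument, but the idea is identical.
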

\begin{proof}[Proof of Lemma~\ref{cor:from_t_to_t_plus_1}]
We complete the proof in two steps: first, from eq.~\eqref{eq:covariance_riccati}, it its $\Sigma\att{t+1}{t} = A_{t} \Sigma\att{t}{t} A_{t}\tran + W_{t} \preceq A_{t} \bar{\Sigma}\att{t}{t} A_{t}\tran+W_{t}= \bar{\Sigma}\att{t+1}{t} $.  Then, from  $\Sigma\att{t}{t}\preceq\bar{\Sigma}\att{t}{t}$, it follows $A_{t} \Sigma\att{t}{t} A_{t}\tran \preceq A_{t} \bar{\Sigma}\att{t}{t} A_{t}\tran$.
\end{proof}

\begin{mylemma}\label{lem:t_plus_1}
Let $\Sigma\att{t}{t-1}$ be defined as in eq.~\eqref{eq:covariance_riccati} with boundary condition the $\Sigma\att{1}{0}$; similarly, let $\bar{\Sigma}\att{t}{t-1}$ be defined as in eq.~\eqref{eq:covariance_riccati} with boundary condition the $\bar{\Sigma}\att{1}{0}$. If~$\Sigma\att{t}{t-1}\preceq\bar{\Sigma}\att{t}{t-1}$, then $\Sigma\att{t}{t}\preceq\bar{\Sigma}\att{t}{t}$.  
\end{mylemma}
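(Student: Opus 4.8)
The plan is to exploit the measurement-update form of the Kalman recursion in eq.~\eqref{eq:covariance_riccati}, which writes $\Sigma\att{t}{t}$ as the inverse of the sum of the inverse predicted covariance and an information term that is independent of the boundary condition. First I would record, directly from the first line of eq.~\eqref{eq:covariance_riccati},
\begin{align*}
\Sigma\att{t}{t} &= [\Sigma\att{t}{t-1}\inv + G_t]\inv, \\
\bar{\Sigma}\att{t}{t} &= [\bar{\Sigma}\att{t}{t-1}\inv + G_t]\inv,
\end{align*}
where $G_t \triangleq C_t(\calS)\tran V_t(\calS)\inv C_t(\calS)\succeq 0$ is the measurement-information matrix, positive semi-definite because $V_t(\calS)$ is positive definite. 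The key structural observation is that $G_t$ is \emph{identical} in both recursions, since it depends only on the sensors and not on the boundary condition that distinguishes $\Sigma$ from $\bar{\Sigma}$.

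Second, I would apply Lemma~\ref{lem:inverse} to the hypothesis $\Sigma\att{t}{t-1}\preceq\bar{\Sigma}\att{t}{t-1}$ to obtain $\bar{\Sigma}\att{t}{t-1}\inv\preceq\Sigma\att{t}{t-1}\inv$. Adding the common term $G_t$ to both sides preserves the Loewner order, giving $\bar{\Sigma}\att{t}{t-1}\inv + G_t \preceq \Sigma\att{t}{t-1}\inv + G_t$. A second application of Lemma~\ref{lem:inverse} reverses the order once more, yielding $[\Sigma\att{t}{t-1}\inv + G_t]\inv \preceq [\bar{\Sigma}\att{t}{t-1}\inv + G_t]\inv$, i.e., $\Sigma\att{t}{t}\preceq\bar{\Sigma}\att{t}{t}$, which is the claim. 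Thus the entire argument reduces to two invocations of Lemma~\ref{lem:inverse} sandwiching the monotonicity of adding a fixed positive semi-definite matrix.

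The one point I would treat carefully—and the only possible obstacle—is the positive-definiteness hypothesis required by Lemma~\ref{lem:inverse}, which is stated for positive definite matrices. I must therefore check that $\Sigma\att{t}{t-1}$, $\bar{\Sigma}\att{t}{t-1}$, and the two summed information matrices are positive definite. This follows because $\Sigma\att{1}{0}$ is a genuine covariance (hence positive definite), each prediction step adds the noise covariance $W_t$ and each update step preserves invertibility, so positive definiteness propagates through the recursion; moreover adding the positive semi-definite $G_t$ to a positive definite inverse keeps it positive definite, so both inverted sums are legitimate. With this in place the two applications of Lemma~\ref{lem:inverse} go through. The structure mirrors exactly the proof of Lemma~\ref{prop:one-step_monotonicity}, which handled the corresponding measurement-update comparison at $t=1$.
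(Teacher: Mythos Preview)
Your proposal is correct and follows essentially the same approach as the paper: apply Lemma~\ref{lem:inverse} to the hypothesis, add the common information term $C_t\tran V_t\inv C_t$, and apply Lemma~\ref{lem:inverse} once more. Your additional care in verifying the positive-definiteness hypothesis of Lemma~\ref{lem:inverse} is a nice touch that the paper leaves implicit.
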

\begin{proof}[Proof of Lemma~\ref{lem:t_plus_1}]
From eq.~\eqref{eq:covariance_riccati}, it is $\Sigma\att{t}{t} =(\Sigma\att{t}{t-1}\inv+C_t\tran V_t\inv C_t)\inv  \preceq (\bar{\Sigma}\att{t}{t-1}\inv+C_t\tran V_t\inv C_t)\inv= \bar{\Sigma}\att{t}{t} $, since Lemma~\ref{lem:inverse} and the condition $\Sigma\att{t}{t-1}\preceq\bar{\Sigma}\att{t}{t-1}$ imply $\Sigma\att{t}{t-1}\inv+C_t\tran V_t\inv C_t\succeq \bar{\Sigma}\att{t}{t-1}\inv+C_t\tran V_t\inv C_t$, which in turn implies $(\Sigma\att{t}{t-1}\inv+C_t\tran V_t\inv C_t)\inv  \preceq (\bar{\Sigma}\att{t}{t-1}\inv+C_t\tran V_t\inv C_t)\inv\!\!$.
\end{proof}

\begin{mycorollary}\label{cor:from_t_to_t}
Let $\Sigma\att{t}{t}$ be defined as in eq.~\eqref{eq:covariance_riccati} with boundary condition the $\Sigma\att{1}{0}$; similarly, let $\bar{\Sigma}\att{t}{t}$ be defined as in eq.~\eqref{eq:covariance_riccati} with boundary condition the $\bar{\Sigma}\att{1}{0}$.
If~$\Sigma\att{t}{t}\preceq\bar{\Sigma}\att{t}{t}$, then $\Sigma\att{t+i}{t+i}\preceq\bar{\Sigma}\att{t+i}{t+i}$ for any positive integer $i$.  
\end{mycorollary}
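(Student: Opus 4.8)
The plan is to prove the corollary by a straightforward induction on $i$, using the two immediately preceding lemmas as the inductive engine. The key structural observation is that the Kalman recursion in eq.~\eqref{eq:covariance_riccati} advances the filtered covariance $\Sigma\att{t}{t}$ to $\Sigma\att{t+1}{t+1}$ through two elementary maps: a \emph{prediction} step $\Sigma\att{t}{t}\mapsto\Sigma\att{t+1}{t}$ and an \emph{update} step $\Sigma\att{t+1}{t}\mapsto\Sigma\att{t+1}{t+1}$. Lemma~\ref{cor:from_t_to_t_plus_1} shows that the prediction step preserves the Loewner order $\preceq$, while Lemma~\ref{lem:t_plus_1} shows the update step does as well; composing the two therefore advances one full step of the recursion while preserving $\preceq$.

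First I would establish the base case $i=1$. Starting from the hypothesis $\Sigma\att{t}{t}\preceq\bar{\Sigma}\att{t}{t}$, Lemma~\ref{cor:from_t_to_t_plus_1} immediately yields $\Sigma\att{t+1}{t}\preceq\bar{\Sigma}\att{t+1}{t}$. Feeding this relation into Lemma~\ref{lem:t_plus_1}, instantiated at time index $t+1$, then yields $\Sigma\att{t+1}{t+1}\preceq\bar{\Sigma}\att{t+1}{t+1}$, which is exactly the claim for $i=1$.

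For the inductive step, I would assume $\Sigma\att{t+i}{t+i}\preceq\bar{\Sigma}\att{t+i}{t+i}$ for some positive integer $i$, and repeat the identical two-step argument with $t$ replaced by $t+i$: Lemma~\ref{cor:from_t_to_t_plus_1} gives $\Sigma\att{t+i+1}{t+i}\preceq\bar{\Sigma}\att{t+i+1}{t+i}$, and Lemma~\ref{lem:t_plus_1} then gives $\Sigma\att{t+i+1}{t+i+1}\preceq\bar{\Sigma}\att{t+i+1}{t+i+1}$, which completes the induction.

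There is no genuine obstacle here: the entire content is carried by the two lemmas, and the corollary is simply their iterated composition. The only point requiring minor care is the bookkeeping of the time indices—ensuring each lemma is applied at the correct time step, so that the predicted covariance produced by Lemma~\ref{cor:from_t_to_t_plus_1} is precisely the input expected by Lemma~\ref{lem:t_plus_1}—but this is entirely routine.
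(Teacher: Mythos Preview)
Your proposal is correct and matches the paper's approach exactly: the paper applies Lemma~\ref{cor:from_t_to_t_plus_1} followed by Lemma~\ref{lem:t_plus_1} to advance one step, then says ``by repeating the previous argument another $(i-1)$ times, the proof is complete,'' which is precisely the induction you spell out.
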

\begin{proof}[Proof of Corollary~\ref{cor:from_t_to_t}]
If $\Sigma\att{t}{t}\preceq\bar{\Sigma}\att{t}{t}$, from Lemma~\ref{cor:from_t_to_t_plus_1}, we get $\Sigma\att{t+1}{t}\preceq\bar{\Sigma}\att{t+1}{t}$, which, from Lemma~\ref{lem:t_plus_1}, implies  $\Sigma\att{t+1}{t+1}\preceq\bar{\Sigma}\att{t+1}{t+1}$.  By repeating the previous argument another $(i-1)$ times, the proof is complete.
\end{proof}

\begin{mycorollary}\label{cor:from_t_to_t+1}
Let $\Sigma\att{t}{t}$ be defined as in eq.~\eqref{eq:covariance_riccati} with boundary condition the $\Sigma\att{1}{0}$; similarly, let $\bar{\Sigma}\att{t}{t}$ be defined as in eq.~\eqref{eq:covariance_riccati} with boundary condition the $\bar{\Sigma}\att{1}{0}$.
If~$\Sigma\att{t}{t}\preceq\bar{\Sigma}\att{t}{t}$, then $\Sigma\att{t+i}{t+i-1}\preceq\bar{\Sigma}\att{t+i}{t+i-1}$ for any positive integer $i$.  
\end{mycorollary}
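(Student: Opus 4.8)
The plan is to chain together the two one-step monotonicity results already established for the Kalman recursion. Lemma~\ref{cor:from_t_to_t_plus_1} is a \emph{prediction step}: it propagates an ordering of filtered covariances $\Sigma\att{s}{s}\preceq\bar{\Sigma}\att{s}{s}$ into an ordering of the predicted covariances $\Sigma\att{s+1}{s}\preceq\bar{\Sigma}\att{s+1}{s}$. Corollary~\ref{cor:from_t_to_t} propagates an ordering of \emph{filtered} covariances forward by any number of full time steps. The key observation is that the quantity we must bound, $\Sigma\att{t+i}{t+i-1}$, is a predicted covariance, hence it sits exactly one prediction step beyond the filtered covariance $\Sigma\att{t+i-1}{t+i-1}$. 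So the target factorizes as ``advance the filtered ordering by $i-1$ full steps, then take one prediction step.''

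First I would dispose of the base case $i=1$: here the claim $\Sigma\att{t+1}{t}\preceq\bar{\Sigma}\att{t+1}{t}$ is exactly the conclusion of Lemma~\ref{cor:from_t_to_t_plus_1} applied to the hypothesis $\Sigma\att{t}{t}\preceq\bar{\Sigma}\att{t}{t}$, so nothing further is needed. For $i\geq 2$, I would first invoke Corollary~\ref{cor:from_t_to_t} with the positive integer $i-1$ to carry the hypothesis $\Sigma\att{t}{t}\preceq\bar{\Sigma}\att{t}{t}$ forward to time $t+i-1$, obtaining $\Sigma\att{t+i-1}{t+i-1}\preceq\bar{\Sigma}\att{t+i-1}{t+i-1}$. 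Then I would apply Lemma~\ref{cor:from_t_to_t_plus_1} once, with time index $t+i-1$, converting this filtered ordering into the desired predicted ordering $\Sigma\att{t+i}{t+i-1}\preceq\bar{\Sigma}\att{t+i}{t+i-1}$, which completes the proof.

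There is no substantive obstacle here; the result is a direct composition of the two previously proved lemmas. The only thing requiring care is index bookkeeping — in particular, matching the single-step offset of Lemma~\ref{cor:from_t_to_t_plus_1} against the filtered-to-predicted gap in the target pair $(t+i,\,t+i-1)$, and ensuring the integer argument $i-1$ fed to Corollary~\ref{cor:from_t_to_t} is positive, which is precisely why the case $i=1$ is split off and handled by the prediction lemma alone.
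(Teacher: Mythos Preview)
Your proposal is correct and follows essentially the same approach as the paper: invoke Corollary~\ref{cor:from_t_to_t} to advance the filtered ordering to time $t+i-1$, then apply Lemma~\ref{cor:from_t_to_t_plus_1} once for the prediction step. Your explicit split of the case $i=1$ is in fact slightly more careful than the paper, which applies Corollary~\ref{cor:from_t_to_t} uniformly without noting that for $i=1$ the intermediate step reduces to the hypothesis itself.
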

\begin{proof}[Proof of Corollary~\ref{cor:from_t_to_t+1}]
If $\Sigma\att{t}{t}\preceq\bar{\Sigma}\att{t}{t}$, from Corollary~\ref{cor:from_t_to_t}, we get $\Sigma\att{t+i-1}{t+i-1}\preceq\bar{\Sigma}\att{t+i-1}{t+i-1}$, which, from Lemma~\ref{cor:from_t_to_t_plus_1}, implies  $\Sigma\att{t+i}{t+i-1}\preceq\bar{\Sigma}\att{t+i}{t+i-1}$.
\end{proof}

\section*{Appendix B: Proof of Theorem~\ref{th:LQG_closed}} 

The proof of Theorem~\ref{th:LQG_closed} follows from the following lemma.

\begin{mylemma}\label{lem:LQG_closed}
For any active sensor set $\calS\subseteq\calV$, and admissible control policies $u\of{1:T}{\calS} \triangleq \{u\of{1}{\calS}, u\of{2}{\calS}, \ldots, u\of{T}{\calS}\}$, 
let $h[\calS, u\of{1:T}{\calS}]$ be Problem~\ref{prob:LQG}'s cost function, i.e.,
\begin{equation*}
h[\calS, u\of{1:T}{\calS}]\triangleq\textstyle \sum_{t=1}^{T}\mathbb{E}(\|x\of{t+1}{\calS}\|^2_{Q_t}+\|u\of{t}{\calS}\|^2_{R_t});
\end{equation*}

Further define the following set-valued function:
\begin{equation*}
\textstyle\ g(\calS)\triangleq\min_{u\of{1:T}{\calS}} h[\calS, u\of{1:T}{\calS}],
\end{equation*}

Consider any sensor set $\calS \subseteq \calV$, and let $u^\star_{1:T,\calS}$  be the vector of control policies $(K_1\hat{x}_{1,\calS}, K_2\hat{x}_{2,\calS}, \ldots, K_T\hat{x}_{T,\calS})$. 
Then $u^\star_{1:T,\calS}$ is an optimal control policy:
\beq
u^\star_{1:T,\calS} \in \argmin_{u_{1:T}(\calS)}h[\calS, u\of{1:T}{\calS}],\label{eq:lem:opt_control}
\eeq
i.e., $g(\calS) =  h[\calS, u^\star\of{1:T}{\calS}]$, and in particular, $u^\star_{1:T,\calS}$ attains a (sensor-dependent) \LQG cost equal to:
\begin{equation}\label{eq:lem:opt_sensors}
g(\calS) = 
\mathbb{E}(\|x_1\|_{N_1})+ \sum_{t=1}^T\left\{\text{tr}[\Theta_t\Sigma\att{t}{t}(\calS)]+\trace{W_tS_t}\right\}.
\end{equation}
\end{mylemma}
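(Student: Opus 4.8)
The plan is to collapse the entire optimization into a single \emph{completion-of-squares} identity that exposes the only control-dependent piece of the cost. Concretely, I would show that for \emph{every} admissible policy $u\of{1:T}{\calS}$,
\begin{equation}
h[\calS, u\of{1:T}{\calS}] = \mathbb{E}(\|x_1\|^2_{N_1}) + \sum_{t=1}^{T}\mathbb{E}(\|u\of{t}{\calS} - K_t x_t\|^2_{M_t}) + \sum_{t=1}^{T}\trace{S_t W_t}. \label{eq:plan_coss}
\end{equation}
The first and third terms of \eqref{eq:plan_coss} do not depend on the policy, so minimizing $h$ reduces to minimizing each $\mathbb{E}(\|u\of{t}{\calS} - K_t x_t\|^2_{M_t})$ separately over admissible $u\of{t}{\calS}$, which I will handle by a projection onto the measurement history. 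This structure simultaneously yields the optimal controller eq.~\eqref{eq:lem:opt_control} and the closed-form cost eq.~\eqref{eq:lem:opt_sensors}.

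To establish \eqref{eq:plan_coss}, I would first record the pathwise one-step identity for the noiseless transition $\tilde{x}_{t+1} = A_t x_t + B_t u\of{t}{\calS}$,
\begin{equation}
\|\tilde{x}_{t+1}\|^2_{S_t} + \|u\of{t}{\calS}\|^2_{R_t} = \|x_t\|^2_{N_t} + \|u\of{t}{\calS} - K_t x_t\|^2_{M_t}, \label{eq:plan_onestep}
\end{equation}
where $S_t = Q_t + N_{t+1}$ is exactly the matrix in eq.~\eqref{eq:control_riccati}. Verifying \eqref{eq:plan_onestep} is a direct expansion: the cross term in $x_t,u\of{t}{\calS}$ is reproduced through $M_t K_t = -B_t\tran S_t A_t$, and matching the quadratic-in-$x_t$ terms requires $N_t = A_t\tran(S_t - S_t B_t M_t\inv B_t\tran S_t)A_t$, which is precisely the Woodbury identity (Lemma~\ref{lem:woodbury}) applied to $(S_t\inv + B_t R_t\inv B_t\tran)\inv$. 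Next I would pass to the actual noisy state $x_{t+1} = \tilde{x}_{t+1} + w_t$: since $w_t$ is zero-mean and independent of $x_t,u\of{t}{\calS}$, we get $\mathbb{E}(\|x_{t+1}\|^2_{S_t}) = \mathbb{E}(\|\tilde{x}_{t+1}\|^2_{S_t}) + \trace{S_t W_t}$, and splitting $\|x_{t+1}\|^2_{S_t} = \|x_{t+1}\|^2_{Q_t} + \|x_{t+1}\|^2_{N_{t+1}}$ then rearranging turns each stage into $\mathbb{E}(\|x_{t+1}\|^2_{Q_t}+\|u\of{t}{\calS}\|^2_{R_t}) = \mathbb{E}(\|x_t\|^2_{N_t}) - \mathbb{E}(\|x_{t+1}\|^2_{N_{t+1}}) + \mathbb{E}(\|u\of{t}{\calS}-K_tx_t\|^2_{M_t}) + \trace{S_t W_t}$. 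Summing over $t$ telescopes the $N$-terms, and the boundary condition $N_{T+1}=0$ leaves only $\mathbb{E}(\|x_1\|^2_{N_1})$, giving \eqref{eq:plan_coss}.

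Finally, to minimize each residual term I would split the state as $x_t = \hat{x}\of{t}{\calS} + (x_t - \hat{x}\of{t}{\calS})$ and write $u\of{t}{\calS} - K_t x_t = (u\of{t}{\calS} - K_t \hat{x}\of{t}{\calS}) - K_t(x_t - \hat{x}\of{t}{\calS})$. Conditioning on the measurement history $\calI_t = \{y\of{1}{\calS},\ldots,y\of{t}{\calS}\}$, the cross term vanishes because an admissible $u\of{t}{\calS}$ is $\calI_t$-measurable while $\mathbb{E}(x_t - \hat{x}\of{t}{\calS}\mid \calI_t)=0$ (as $\hat{x}\of{t}{\calS}$ is the conditional mean). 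Using $\Theta_t = K_t\tran M_t K_t$ and $\Sigma\att{t}{t}(\calS)=\mathbb{E}[(x_t-\hat{x}\of{t}{\calS})(x_t-\hat{x}\of{t}{\calS})\tran]$, this gives $\mathbb{E}(\|u\of{t}{\calS} - K_t x_t\|^2_{M_t}) = \mathbb{E}(\|u\of{t}{\calS} - K_t\hat{x}\of{t}{\calS}\|^2_{M_t}) + \trace{\Theta_t\Sigma\att{t}{t}(\calS)}$. Since $M_t\succ 0$, the first summand is driven to zero exactly by $u\of{t}{\calS}=K_t\hat{x}\of{t}{\calS}$, proving optimality, and substituting back into \eqref{eq:plan_coss} yields eq.~\eqref{eq:lem:opt_sensors}. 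The step I expect to be most delicate is justifying that $\trace{\Theta_t\Sigma\att{t}{t}(\calS)}$ is a \emph{policy-independent} constant: although different admissible policies generate different trajectories, measurements, and estimates $\hat{x}\of{t}{\calS}$, the error covariance $\Sigma\att{t}{t}(\calS)$ still obeys the deterministic recursion of Lemma~\ref{lem:covariance_riccati} regardless of the applied inputs, because the inputs are known to the estimator and cancel in the error dynamics. This control-invariance is the essence of the separation between estimation and control and must be invoked carefully precisely where the minimization ranges over all admissible policies.
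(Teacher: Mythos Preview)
Your proposal is correct and takes a genuinely different route from the paper. The paper proves Lemma~\ref{lem:LQG_closed} by backward dynamic programming: it defines the cost-to-go $g_t(\calS)=\min_{u_{t:T}(\calS)}h_t[\calS,u_{t:T}(\calS)]$, verifies the terminal case $t=T$ by expanding and completing the square inside $\min_{u_T}\mathbb{E}(\|A_Tx_T+B_Tu_T+w_T\|^2_{Q_T}+\|u_T\|^2_{R_T})$, and then inducts from $t=l+1$ to $t=l$ via the Bellman recursion $g_l(\calS)=\min_{u_l}[\mathbb{E}(\|x_{l+1}\|^2_{Q_l}+\|u_l\|^2_{R_l})+g_{l+1}(\calS)]$. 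You instead establish the global completion-of-squares identity \eqref{eq:plan_coss} in one shot via the one-step identity \eqref{eq:plan_onestep} and telescoping, then minimize all stages simultaneously by the orthogonal decomposition $u_t-K_tx_t=(u_t-K_t\hat{x}_{t,\calS})-K_t(x_t-\hat{x}_{t,\calS})$. Both approaches rely on the same algebraic ingredient (Woodbury on $(S_t\inv+B_tR_t\inv B_t\tran)\inv$ to identify $N_t=A_t\tran S_tA_t-\Theta_t$) and on the Kalman estimator being the minimum-mean-square estimator; the paper invokes the latter implicitly at eq.~\eqref{eq:closed_aux_3b}, whereas you make the orthogonality argument explicit. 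Your route is shorter and isolates more cleanly the one place where separation is actually used---your final paragraph on policy-independence of $\Sigma\att{t}{t}(\calS)$ is exactly the point the paper's inductive proof absorbs without comment. The paper's dynamic-programming route, on the other hand, makes the optimality of $u_t^\star=K_t\hat{x}_{t,\calS}$ emerge stage by stage, which connects more transparently to Bellman's principle and to the certainty-equivalence remark that follows Theorem~\ref{th:LQG_closed}.
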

\begin{proof}[Proof of Lemma~\ref{lem:LQG_closed}]
Let $h_t[\calS, u_{t:T}(\calS)]$ be the \LQG cost in Problem~\ref{prob:LQG} from time $t$ up to time $T$, i.e.,
\begin{equation*}
h_t[\calS, u_{t:T}(\calS)]\triangleq\sum_{k=t}^T\mathbb{E}(\|x_{k+1}(\calS)\|^2_{Q_t}+\|u_{k}(\calS)\|^2_{R_t}).
\end{equation*}
and define $g_t(\calS)\triangleq\min_{u_{t:T}(\calS)}h_t[\calS, u_{t:T}(\calS)]$.  
Clearly, $g_1(\calS)$ matches the \LQG cost in eq.~\eqref{eq:lem:opt_sensors}.

We complete the proof inductively.  In particular, we first prove Lemma~\ref{lem:LQG_closed} for $t=T$, and then for any other $t\in \{1,2,\ldots, T-1\}$. To this end, we use the following observation: given any sensor set $\calS$, and any time $t \in \{1,2,\ldots, T\}$, \begin{equation}\label{eq:LQG_closed_reduction_step_1}
g_t(\calS)=\min_{u_{t}(\calS)} \left[ \mathbb{E}(\|x_{t+1}(\calS)\|^2_{Q_t}+\|u_t(\calS)\|^2_{R_t})+ g_{t+1}(\calS)\right],
\end{equation}
with boundary condition the $g_{T+1}(\calS)=0$.  
In particular, eq.~\eqref{eq:LQG_closed_reduction_step_1} holds since 
\begin{align*}
g_t(\calS)&=\min_{u_{t}(\calS)}\mathbb{E}\left\{\|x_{t+1}(\calS)\|^2_{Q_t}+\|u_t(\calS)\|^2_{R_t})+\right.\\
&\min_{u_{t+1:T}(\calS)}\left.h_{t+1}[\calS, u_{t+1:T}(\calS)]\right\},
\end{align*} 
where one can easily recognize the second summand to match the definition of
$g_{t+1}(\calS)$. 

We prove Lemma~\ref{lem:LQG_closed} for $t=T$.  From eq.~\eqref{eq:LQG_closed_reduction_step_1}, for $t=T$,
\beal
 g_T(\calS)
\!\!\!&\!\!\!=\min_{u_T(\calS)}\left[ \mathbb{E}(\|x_{T+1}(\calS)\|^2_{Q_T}+\|u_T(\calS)\|^2_{R_T})\right]\\
\!\!\!&\!\!\!=\min_{u_T(\calS)}\left[ \mathbb{E}(\|A_Tx_T+B_Tu_T(\calS)+w_T\|^2_{Q_T}+\right.\\
&\left.\|u_T(\calS)\|^2_{R_T})\right],\label{eq:closed_aux_1}
\eeal
since $x_{T+1}(\calS)=A_Tx_T+B_Tu_T(\calS)+w_T$, as per eq.~\eqref{eq:system}; we note that for notational simplicity we drop henceforth the dependency of $x_T$ on $\calS$ since $x_T$ is independent of $u_T(\calS)$, which is the variable under optimization in the optimization problem in~\eqref{eq:closed_aux_1}. Developing eq.~\eqref{eq:closed_aux_1} we get:
\beal
&&  g_T(\calS)\\
\!\!\!&\!\!\!=\!\!\!&\!\!\!\min_{u_T(\calS)}\left[\mathbb{E}(u_T(\calS)\tran B_T\tran Q_T B_Tu_T(\calS)+w_T\tran Q_Tw_T+\right.\\
&&\left. x_T\tran A_T\tran Q_TA_Tx_T+ 2x_T\tran A_T\tran Q_T B_Tu_T(\calS)+ \right.\\
&&\left. 2x_T\tran A_T\tran Q_T w_T+2 u_T(\calS)\tran B_T\tran Q_Tw_T+\|u_T(\calS)\|^2_{R_T})\right]\\
\!\!\!&\!\!\!=\!\!\!&\!\!\!\min_{u_T(\calS)}\left[ \mathbb{E}(u_T(\calS)\tran B_T\tran Q_T B_Tu_T(\calS)+\|w_T\|_{Q_T}^2+ \right.\\
&&\left.x_T\tran A_T\tran Q_TA_Tx_T+ 2x_T\tran A_T\tran Q_T B_Tu_T(\calS)+\|u_T\|^2_{R_T})\right],\label{eq:closed_aux_2}
\eeal
where the latter equality holds since $w_T$ has zero mean and $w_T$, $x_T$, and $u_T(\calS)$ are independent.  From eq.~\eqref{eq:closed_aux_2}, rearranging the terms, and using the notation in eq.~\eqref{eq:control_riccati},
\beal
&& g_T(\calS) \\
\!\!\!&\!\!\!=\!\!\!&\!\!\!\min_{u_T(\calS)}\left[ \mathbb{E}(u_T(\calS)\tran (B_T\tran Q_T B_T+R_T)u_T(\calS)+\right.\\&&\left. \|w_T\|_{Q_T}^2+x_T\tran A_T\tran Q_TA_Tx_T+ 2x_T\tran A_T\tran Q_T B_Tu_T(\calS)\right]\\
\!\!\!&\!\!\!=\!\!\!&\!\!\!\min_{u_T(\calS)}\left[\mathbb{E}(\|u_T(\calS)\|_{M_T}^2+\|w_T\|_{Q_T}^2+ x_T\tran A_T\tran Q_TA_Tx_T+\right.\\&&\left. 2x_T\tran A_T\tran Q_T B_Tu_T(\calS)\right]\\
\!\!\!&\!\!\!=\!\!\!&\!\!\!\min_{u_T(\calS)}\left[\mathbb{E}(\|u_T(\calS)\|_{M_T}^2+\|w_T\|_{Q_T}^2+ x_T\tran A_T\tran  Q_TA_Tx_T-\right.\\&&\left.2x_T\tran (-A_T\tran Q_T B_TM_T\inv) M_Tu_T(\calS)\right]\\
\!\!\!&\!\!\!=\!\!\!&\!\!\!\min_{u_T(\calS)}\left[ \mathbb{E}(\|u_T(\calS)\|_{M_T}^2+\|w_T\|_{Q_T}^2+ x_T\tran A_T\tran  Q_TA_Tx_T-\right.\\&&\left.2x_T\tran K_T\tran M_Tu_T(\calS)\right]\\
%
\!\!\!&\!\!\!=\!\!\!&\!\!\!\min_{u_T(\calS)}\left[ \mathbb{E}(\|u_T(\calS)-K_Tx_T\|_{M_T}^2+\|w_T\|_{Q_T}^2+\right.\\&&\left. x_T\tran (A_T\tran  Q_TA_T- K_T\tran M_T K_T)x_T\right]\\
\!\!\!&\!\!\!=\!\!\!&\!\!\!\min_{u_T(\calS)}\left(\mathbb{E}(\|u_T(\calS)-K_Tx_T\|_{M_T}^2+\|w_T\|_{Q_T}^2+\right.\\&&\left. x_T\tran (A_T\tran  Q_TA_T- \Theta_T)x_T\right)\\
\!\!\!&\!\!\!=\!\!\!&\!\!\!\min_{u_T(\calS)}\left[ \mathbb{E}(\|u_T(\calS)-K_Tx_T\|_{M_T}^2+\|w_T\|_{Q_T}^2+ \|x_T\|_{N_T}^2\right]\\
\!\!\!&\!\!\!=\!\!\!&\!\!\!\min_{u_T(\calS)} \mathbb{E}(\|u_T(\calS)-K_Tx_T\|_{M_T}^2)+\trace{W_TQ_T}+ \\&& \mathbb{E}( \|x_T\|_{N_T}^2),\label{eq:closed_aux_3}
\eeal
where the latter equality holds since $\mathbb{E}(\|w_T\|_{Q_T}^2)=\mathbb{E}\left[\trace{w_T\tran Q_Tw_T}\right]=\trace{\mathbb{E}(w_T\tran w_T)Q_T}=\trace{W_TQ_T}$.  
Now we note that 
\begin{align}
&\min_{u_T(\calS)} \mathbb{E}(\|u_T(\calS)-K_Tx_T\|_{M_T}^2)\nonumber\\
&=\mathbb{E}(\|K_T\hat{x}_{T}(\calS)-K_Tx_T\|_{M_T}^2) \nonumber\\
&=\trace{\Theta_T \Sigma\att{T}{T}(\calS)}, \label{eq:closed_aux_3b}
\end{align}
since $\hat{x}_{T}(\calS)$ is the Kalman estimator of the state $x_T$, i.e., the minimum mean square estimator of $x_T$, which implies that $K_T\hat{x}_{T}(\calS)$ is the minimum mean square estimator of $K_T{x}_{T}(\calS)$~\cite[Appendix~E]{bertsekas2005dynamic}. Substituting~\eqref{eq:closed_aux_3b} back into eq.~\eqref{eq:closed_aux_3}, 
we get: 
\begin{align*}
g_T(\calS)  = \mathbb{E}( \|x_T\|_{N_T}^2) + \trace{\Theta_T \Sigma\att{T}{T}(\calS)} + \trace{W_TQ_T},
\end{align*}
which proves that Lemma~\ref{lem:LQG_closed} holds for $t=T$. 

We now prove that if Lemma~\ref{lem:LQG_closed} holds for $t=l+1$, it also holds for $t=l$. To this end, assume eq.~\eqref{eq:LQG_closed_reduction_step_1} holds for $t=l+1$.  Using the notation in eq.~\eqref{eq:control_riccati},
\beal
g_l(\calS)
\!\!\!&\!\!\!=\!\!\!&\!\!\!\min_{u_l(\calS)} \left[ \mathbb{E}(\|x_{l+1}(\calS)\|^2_{Q_l}+\|u_l(\calS)\|^2_{R_l})+ g_{l+1}(\calS)\right]\\
\!\!\!&\!\!\!=\!\!\!&\!\!\!\min_{u_l(\calS)} \left\{ \mathbb{E}(\|x_{l+1}(\calS)\|^2_{Q_l}+\|u_l(\calS)\|^2_{R_l})+ \right.\\
&&\left.\mathbb{E}(\|x_{l+1}(\calS)\|_{N_{l+1}}^2)+ \sum_{k=l+1}^T\left[\trace{\Theta_k\Sigma\att{k}{k}(\calS)}+\right.\right.\\
&&\left.\left.\trace{W_kS_k}\right]\right\}\\
\!\!\!&\!\!\!=\!\!\!&\!\!\!\min_{u_l(\calS)} \left\{ \mathbb{E}(\|x_{l+1}(\calS)\|^2_{S_l}+\|u_l(\calS)\|^2_{R_l})+\right.\\
&&\left.\sum_{k=l+1}^T[\trace{\Theta_k\Sigma\att{k}{k}(\calS)}+\trace{W_kS_k}]\right\}\\
\!\!\!&\!\!\!=\!\!\!&\!\!\!\sum_{k=l+1}^T[\trace{\Theta_k\Sigma\att{k}{k}(\calS)}+\trace{W_kS_k}]+\\
&&\min_{u_l(\calS)} \mathbb{E}(\|x_{l+1}(\calS)\|^2_{S_l}+\|u_l(\calS)\|^2_{R_l}).\label{eq:closed_aux_10}
\eeal

In eq.~\eqref{eq:closed_aux_10}, for the last summand in the last right-hand-side, by following the same steps as for the proof of Lemma~\ref{lem:LQG_closed} for $t=T$, we have:
\beal
&\min_{u_l(\calS)} \mathbb{E}(\|x_{l+1}(\calS)\|^2_{S_l}+\|u_l(\calS)\|^2_{R_l})=\\
&\mathbb{E}(\|x_l\|_{N_l}^2)+\trace{\Theta_l\Sigma\att{l}{l}(\calS)}+\trace{W_lQ_l},\label{eq:closed_aux_5}
\eeal
and $u_l(\calS)=K_l\hat{x}_{l}(\calS)$.  Therefore, by substituting eq.~\eqref{eq:closed_aux_5} back to eq.~\eqref{eq:closed_aux_10}, we get:
\beal
g_l(\calS)
\!\!\!&\!\!\!=\!\!\!&\!\!\! \mathbb{E}(\|x_l\|_{N_l}^2)+\sum_{k=l}^T[\trace{\Theta_k\Sigma\att{k}{k}(\calS)}+\trace{W_kS_k}].\label{eq:closed_aux_4}
\eeal
which proves that if Lemma~\ref{lem:LQG_closed} holds for $t=l+1$, it also holds for $t=l$.
By induction, this also proves that Lemma~\ref{lem:LQG_closed} holds for $l=1$, and we already observed that
 $g_1(\calS)$ matches the original \LQG cost in eq.~\eqref{eq:lem:opt_sensors}, hence concluding the proof.
\end{proof}

\begin{proof}[Proof of Theorem~\ref{th:LQG_closed}]
The proof easily follows from Lemma~\ref{lem:LQG_closed}.
Eq.~\eqref{eq:opt_sensors} is a direct consequence of eq.~\eqref{eq:lem:opt_sensors}, since both $\mathbb{E}(x_1\tran N_1 x_1)=\trace{\Sigma\att{1}{1}N_1}$ and $\sum_{t=1}^T\trace{W_tS_t}$ are independent of the choice of the sensor set $\calS$.  Second,~\eqref{eq:opt_control} directly follows from eq.~\eqref{eq:lem:opt_control}.
\end{proof}

%

\section*{Appendix C: Proof of Theorem~\ref{th:approx_bound}}

The following result is used in the proof of Theorem~\ref{th:approx_bound}.

\begin{myproposition}[Monotonicity of cost function in eq.~\eqref{eq:opt_sensors}]\label{prop:monotonicity}
Consider the cost function in eq.~\eqref{eq:opt_sensors},  namely, for any sensor set $\calS\subseteq \calV$ the set function $\sum_{t=1}^{T}\trace{\Theta_t \Sigma\att{t}{t}(\calS)}$.  Then, for any sensor sets such that $\calS_1\subseteq \calS_2 \subseteq \calV$, it holds $\sum_{t=1}^{T}\trace{\Theta_t \Sigma\att{t}{t}(\calS_1)}\geq \sum_{t=1}^{T}\trace{\Theta_t \Sigma\att{t}{t}(\calS_2)}$.  
\end{myproposition}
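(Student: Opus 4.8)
The plan is to reduce the monotonicity of the scalar cost to a Loewner-ordering statement on the error covariances, and then to invoke the trace-monotonicity Lemma~\ref{lem:traceAB_mon}. First I would record that each weight matrix is positive semidefinite: from eq.~\eqref{eq:control_riccati} the matrices $S_t$ are positive semidefinite (they are sums of $Q_t\succeq 0$ and $N_{t+1}$, with $N_{t+1}\succeq 0$ following by a downward induction from the boundary condition $N_{T+1}=0$), hence $M_t = B_t\tran S_t B_t + R_t \succeq R_t \succ 0$ and $\Theta_t = K_t\tran M_t K_t \succeq 0$ for all $t$. Given this, it suffices to prove the covariance ordering $\Sigma\att{t}{t}(\calS_1) \succeq \Sigma\att{t}{t}(\calS_2)$ for every $t$, because then Lemma~\ref{lem:traceAB_mon} applied with $B = \Theta_t \succeq 0$ yields $\trace{\Theta_t \Sigma\att{t}{t}(\calS_2)} \leq \trace{\Theta_t \Sigma\att{t}{t}(\calS_1)}$ termwise, and summing over $t$ gives the claim.

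The core of the argument is an induction on $t$ establishing $\Sigma\att{t}{t}(\calS_1) \succeq \Sigma\att{t}{t}(\calS_2)$. For the base case $t=1$, Lemma~\ref{prop:one-step_monotonicity} gives $\Sigma\att{1}{1}(\calS_1) \succeq \Sigma\att{1}{1}(\calS_2)$ directly from $\calS_1 \subseteq \calS_2$. For the inductive step, suppose the ordering holds at time $t$. The prediction step of the recursion in eq.~\eqref{eq:covariance_riccati} is sensor-independent, so applying the affine map $X \mapsto A_t X A_t\tran + W_t$ (equivalently, invoking Lemma~\ref{cor:from_t_to_t_plus_1}, whose proof uses only the prediction formula) preserves the ordering and gives $\Sigma\att{t+1}{t}(\calS_1) \succeq \Sigma\att{t+1}{t}(\calS_2)$.

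For the update step at time $t+1$ I would pass to information form. Writing $I_{t+1}(\calS) \triangleq C_{t+1}(\calS)\tran V_{t+1}(\calS)\inv C_{t+1}(\calS) = \sum_{i \in \calS} C_{i,t+1}\tran V_{i,t+1}\inv C_{i,t+1}$, the update reads $\Sigma\att{t+1}{t+1}(\calS)\inv = \Sigma\att{t+1}{t}(\calS)\inv + I_{t+1}(\calS)$. The prediction ordering together with Lemma~\ref{lem:inverse} gives $\Sigma\att{t+1}{t}(\calS_1)\inv \preceq \Sigma\att{t+1}{t}(\calS_2)\inv$, while $\calS_1 \subseteq \calS_2$ gives $I_{t+1}(\calS_1) \preceq I_{t+1}(\calS_2)$ exactly as in eq.~\eqref{eq:aux_1}. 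Adding these two inequalities yields $\Sigma\att{t+1}{t+1}(\calS_1)\inv \preceq \Sigma\att{t+1}{t+1}(\calS_2)\inv$, and a final application of Lemma~\ref{lem:inverse} restores $\Sigma\att{t+1}{t+1}(\calS_1) \succeq \Sigma\att{t+1}{t+1}(\calS_2)$, closing the induction.

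I expect the main obstacle to be this update step rather than the bookkeeping: unlike the Corollaries~\ref{cor:from_t_to_t} and \ref{cor:from_t_to_t+1}, which propagate an ordering between two covariance sequences sharing a common measurement model, here the two sequences are driven by \emph{different} sensor sets. The observation that makes the induction go through is that in information form both effects act in the same direction, namely the larger prior covariance of $\calS_1$ (hence smaller prior information) and the smaller measurement information $I_{t+1}(\calS_1)$, so they add rather than compete. Once this is noted, the remaining steps are routine applications of the matrix-inversion and trace-monotonicity lemmas of Appendix~A.
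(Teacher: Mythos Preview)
Your proposal is correct and follows the same overall architecture as the paper's proof: establish the Loewner ordering $\Sigma\att{t}{t}(\calS_1)\succeq\Sigma\att{t}{t}(\calS_2)$ for all $t$, then apply Lemma~\ref{lem:traceAB_mon} termwise. The paper's version is terser---it cites Lemma~\ref{prop:one-step_monotonicity} for $t=1$ and then Corollary~\ref{cor:from_t_to_t} to propagate---but you are right to be cautious about that second step: Corollary~\ref{cor:from_t_to_t} (and the lemmata it rests on) is stated for two covariance sequences driven by the \emph{same} measurement model and differing only in their boundary condition, whereas here the sequences use different sensor sets at every update. Your explicit induction, passing to information form so that the smaller prior information and the smaller measurement information of $\calS_1$ add rather than compete, is exactly the argument needed to close that gap. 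You are also more careful than the paper in recording that $\Theta_t\succeq 0$ (the paper only says ``symmetric,'' but Lemma~\ref{lem:traceAB_mon} needs the weight matrix to be positive semidefinite). In short: same route, but your write-up tightens two points the paper leaves implicit.
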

\begin{proof}
Lemma~\ref{prop:one-step_monotonicity} implies $\Sigma\att{1}{1}(\calS_1) \succeq \Sigma\att{1}{1}(\calS_2)$, and then, Corollary~\ref{cor:from_t_to_t} implies $\Sigma\att{t}{t}(\calS_1) \succeq \Sigma\att{t}{t}(\calS_2)$.  Finally, for any $\allT$, Lemma~\ref{lem:traceAB_mon} implies $\trace{\Theta_t \Sigma\att{t}{t}(\calS_1)}\geq \trace{\Theta_t \Sigma\att{t}{t}(\calS_2)}$, since each $\Theta_t$ is symmetric.
\end{proof}

\begin{proof}[Proof of part~(1) of Theorem~\ref{th:approx_bound} (Algorithm~\ref{alg:greedy}'s approximation quality)] 
Using Proposition~\ref{prop:monotonicity}, and the supermodularity ratio Definition~\ref{def:super_ratio}, the proof of the upper bound $\exp(-\gamma_g)$ in ineq.~\eqref{ineq:approx_bound} follows the same steps as the proof of~\cite[Theorem~1]{chamon2016near}.
\end{proof}

\begin{proof}[Proof of part~(2) of Theorem~\ref{th:approx_bound} (Algorithm~\ref{alg:overall}'s running time)]
We compute Algorithm~\ref{alg:overall}'s running time by adding the running times of Algorithm~\ref{alg:overall}'s lines 1 and 2:

\setcounter{paragraph}{0}
\paragraph{Running time of Algorithm~\ref{alg:overall}'s line~1} Algorithm~\ref{alg:overall}'s line~1 needs $O(k|\calV|Tn^{2.4})$ time.  In~particular, Algorithm~\ref{alg:overall}'s line~2 running time is the running time of Algorithm~\ref{alg:greedy}, whose running time we show next to be $O(k|\calV|Tn^{2.4})$.  To this end, we first compute the running time of Algorithm~\ref{alg:greedy}'s line~1, and then the  running time of Algorithm~\ref{alg:greedy}'s lines~3--15. Algorithm~\ref{alg:greedy}'s line~1 needs $O(n^{2.4})$ time, using the Coppersmith algorithm for both matrix inversion and multiplication~\cite{coppersmith1990matrix}.  Then, Algorithm~\ref{alg:greedy}'s lines 3--15 are repeated $k$ times, due to the ``while loop'' between lines~3 and~15.  We now need to find the running time of Algorithm~\ref{alg:greedy}'s lines 4--14; to this end, we first find the running time of Algorithm~\ref{alg:greedy}'s lines 4--12, and then the running time of Algorithm~\ref{alg:greedy}'s lines 13 and~14.  In more detail, the running time of Algorithm~\ref{alg:greedy}'s lines 4--12 is $O(|\calV|Tn^{2.4})$, since Algorithm~\ref{alg:greedy}'s lines~5--11 are repeated at most $|\calV|$ times and Algorithm~\ref{alg:greedy}'s lines 6--10, as well as line~11 need $O(Tn^{2.4})$ time, using the Coppersmith-Winograd algorithm for both matrix inversion and multiplication~\cite{coppersmith1990matrix}.  Moreover, Algorithm~\ref{alg:greedy}'s lines 13 and~14 need $O[|\calV|\log(|\calV|)]$ time, since line~13 asks for the minimum among at most~$|\calV|$ values of the $\text{cost}_{(\cdot)}$, which takes $O[|\calV|\log(|\calV|)]$ time to be found, using, e.g., the merge sort algorithm.  In sum, Algorithm~\ref{alg:greedy}'s running time is $O[n^{2.4}+k|\calV|Tn^{2.4}+k |\calV|\log(|\calV|)]=O(k|\calV|Tn^{2.4})$.

\paragraph{Running time of Algorithm~\ref{alg:overall}'s line~2} Algorithm~\ref{alg:overall}'s line~2 needs $O(n^{2.4})$ time, using the Coppersmith algorithm for both matrix inversion and multiplication~\cite{coppersmith1990matrix}.

In sum, Algorithm~\ref{alg:overall}'s running time is $O(k|\calV|Tn^{2.4}+n^{2.4})=O(k|\calV|Tn^{2.4})$.
\end{proof}

\section*{Appendix D: Proof of Theorem~\ref{th:submod_ratio}}

\begin{proof}[Proof of Theorem~\ref{th:submod_ratio}]
We complete the proof by first deriving a lower bound for the numerator of the supermodularity ratio $\gamma_g$, and then, by deriving an upper bound for the denominator  of the supermodularity ratio $\gamma_g$.
 
We use the following notation: $c\triangleq\mathbb{E}(x_1\tran N_1 x_1)+\sum_{t=1}^T\trace{W_tS_t}$,
and for any sensor set $\calS \subseteq \calV$, and time $\allT$, $f_t(\calS)\triangleq\trace{\Theta_t \Sigma\att{t}{t}(\calS)}$.  Then, the cost function $g(\calS)$ in eq.~\eqref{eq:gS} is written as  $g(\calS)=c+\sum_{t=1}^T f_t(\calS),$
due to eq.~\eqref{eq:lem:opt_sensors} in Lemma~\ref{lem:LQG_closed}.

\setcounter{paragraph}{0}
\paragraph{Lower bound for the numerator of the supermodularity ratio $\gamma_g$} Per the supermodularity ratio Definition~\ref{def:super_ratio}, the numerator of the submodularity ratio $\gamma_g$ is of the form
\begin{equation}\label{eq:ratio_aux_1}
\sum_{t=1}^{T}[f_t(\calS)-f_t(\calS\cup \{v\})],
\end{equation}
for some sensor set $\calS\subseteq \calV$, and sensor $v \in \calV$; to lower bound the sum in~\eqref{eq:ratio_aux_1}, we lower bound each $f_t(\calS)-f_t(\calS\cup \{v\})$.
To this end, from~eq.~\eqref{eq:covariance_riccati} in Lemma~\ref{lem:covariance_riccati}, observe
\begin{equation*}
\Sigma\att{t}{t}(\calS\cup \{v\})=[\Sigma\att{t}{t-1}\inv(\calS \cup \{v\})+\sum_{i \in \calS \cup \{v\}} \bar{C}_{i,t}\tran \bar{C}_{i,t}]\inv\!\!.
\end{equation*}
Define $\Omega_t=\Sigma\att{t}{t-1}\inv(\calS)+\sum_{i \in \calS}^T \bar{C}_{i,t}\tran \bar{C}_{i,t}$, and $\bar{\Omega}_{t}=\Sigma\att{t}{t-1}\inv(\calS\cup \{v\})+\sum_{i \in \calS}^T \bar{C}_{i,t}\tran \bar{C}_{i,t}$; using the Woodbury identity in Lemma~\ref{lem:woodbury}, 
\begin{align*}
f_t(\calS\cup \{v\})&=\trace{\Theta_t\bar{\Omega}_{t}\inv}-\\
&\hspace{0.4em}\trace{\Theta_t\bar{\Omega}_{t}\inv \bar{C}_{v,t}\tran (I+\bar{C}_{v,t} \bar{\Omega}_{t}\inv \bar{C}_{v,t}\tran)\inv \bar{C}_{v,t} \bar{\Omega}_{t}\inv}.
\end{align*}
Therefore, for any time $t \in \until{T}$,
\begin{align}
& f_t(\calS)-f_t(\calS\cup \{v\})=
\nonumber\\
&\trace{\Theta_t\Omega_{t}\inv}-\trace{\Theta_t\bar{\Omega}_{t}\inv}+\nonumber\\
&\trace{\Theta_t\bar{\Omega}_{t}\inv \bar{C}_{v,t}\tran (I+\bar{C}_{v,t} \bar{\Omega}_{t}\inv \bar{C}_{v,t}\tran)\inv \bar{C}_{v,t} \bar{\Omega}_{t}\inv}\geq\nonumber\\
& \trace{\Theta_t\bar{\Omega}_{t}\inv \bar{C}_{v,t}\tran (I+\bar{C}_{v,t} \bar{\Omega}_{t}\inv \bar{C}_{v,t}\tran)\inv \bar{C}_{v,t} \bar{\Omega}_{t}\inv},\label{ineq:super_ratio_aux_1}
\end{align}
where ineq.~\eqref{ineq:super_ratio_aux_1} holds because $\trace{\Theta_t\Omega_{t}\inv}\geq \trace{\Theta_t\bar{\Omega}_{t}\inv}$.  In~particular, the inequality $\trace{\Theta_t\Omega_{t}\inv}\geq \trace{\Theta_t\bar{\Omega}_{t}\inv}$ is implied as follows:  Lemma~\ref{prop:one-step_monotonicity} implies $\Sigma\att{1}{1}(\calS)\succeq\Sigma\att{1}{1}(\calS\cup\{u\})$.  Then, Corollary~\ref{cor:from_t_to_t+1} implies $ \Sigma\att{t}{t-1}(\calS)\succeq \Sigma\att{t}{t-1}(\calS\cup \{v\})$, and as a result, Lemma~\ref{lem:inverse} implies $\Sigma\att{t}{t-1}(\calS)\inv\preceq\Sigma\att{t}{t-1}(\calS\cup\{u\})\inv\!\!$. Now, $\Sigma\att{t}{t-1}(\calS)\inv\preceq\Sigma\att{t}{t-1}(\calS\cup\{u\})\inv$ and the definition of $\Omega_t$ and~of~$\bar{\Omega}_t$ imply $\Omega_{t}\preceq \bar{\Omega}_{t}$.  Next, Lemma~\ref{lem:inverse} implies  $\Omega_{t}\inv\succeq \bar{\Omega}_{t}\inv$.  As a result, since also $\Theta_t$ is a symmetric matrix, Lem- \mbox{ma~\ref{lem:traceAB_mon} gives the desired inequality $\trace{\Theta_t\Omega_{t}\inv}\geq \trace{\Theta_t\bar{\Omega}_{t}\inv}$.} 

Continuing from the ineq.~\eqref{ineq:super_ratio_aux_1},
\begin{align}
& f_t(\calS)-f_t(\calS\cup \{v\})\geq\nonumber \\
&\trace{\bar{C}_{v,t} \bar{\Omega}_{t}\inv \Theta_t\bar{\Omega}_{t}\inv \bar{C}_{v,t}\tran (I+\bar{C}_{v,t} \bar{\Omega}_{t}\inv \bar{C}_{v,t}\tran)\inv }\geq \nonumber\\
&\lambda_\min((I+\bar{C}_{v,t} \bar{\Omega}_{t}\inv \bar{C}_{v,t}\tran)\inv)\trace{\bar{C}_{v,t} \bar{\Omega}_{t}\inv \Theta_t\bar{\Omega}_{t}\inv \bar{C}_{v,t}\tran },\label{ineq:super_ratio_aux_2}
\end{align}
where ineq.~\eqref{ineq:super_ratio_aux_2} holds due to Lemma~\ref{lem:trace_low_bound_lambda_min}.  From ineq.~\eqref{ineq:super_ratio_aux_2},
\begin{align}
& f_t(\calS)-f_t(\calS\cup \{v\})\geq\nonumber \\
&= \lambda_\max\inv(I+\bar{C}_{v,t} \bar{\Omega}_{t}\inv \bar{C}_{v,t}\tran)\trace{\bar{C}_{v,t} \bar{\Omega}_{t}\inv \Theta_t\bar{\Omega}_{t}\inv \bar{C}_{v,t}\tran }\nonumber\\
&\geq \lambda_\max\inv(I+\bar{C}_{v,t} \Sigma\att{t}{t}(\emptyset) \bar{C}_{v,t}\tran)\trace{\bar{C}_{v,t} \bar{\Omega}_{t}\inv \Theta_t\bar{\Omega}_{t}\inv \bar{C}_{v,t}\tran }\nonumber\\
&= \lambda_\max\inv(I+\bar{C}_{v,t} \Sigma\att{t}{t}(\emptyset) \bar{C}_{v,t}\tran)\trace{\Theta_t \bar{\Omega}_{t}\inv \bar{C}_{v,t}\tran \bar{C}_{v,t}\bar{\Omega}_{t}\inv},\label{ineq:super_ratio_aux_3}
\end{align}
where we used $\bar{\Omega}_{t}\inv\preceq\Sigma\att{t}{t}(\emptyset)$, which holds because of the following: the definition of $\bar{\Omega}_t$ implies  $\bar{\Omega}_{t}\succeq \Sigma\inv\att{t}{t-1}(\calS\cup \{v\})$, and as a result, from Lemma~\ref{lem:inverse} we get $\bar{\Omega}_{t}\inv \preceq \Sigma\att{t}{t-1}(\calS\cup \{v\})$.  In addition, Corollary~\ref{cor:from_t_to_t+1} and the fact that $\Sigma\att{1}{1}(\calS\cup \{v\})\preceq \Sigma\att{1}{1}(\emptyset)$, which holds due to Lemma~\ref{prop:one-step_monotonicity}, imply $\Sigma\att{t}{t-1}(\calS\cup \{v\})\preceq \Sigma\att{t}{t-1}(\emptyset)$.  Finally, from eq.~\eqref{eq:covariance_riccati} in Lemma~\ref{lem:covariance_riccati} it is $\Sigma\att{t}{t-1}(\emptyset)=\Sigma\att{t}{t}(\emptyset)$. Overall, the desired inequality $\bar{\Omega}_{t}\inv\preceq\Sigma\att{t}{t}(\emptyset)$ holds.

Consider a time $t' \in \until{T}$ such that for any time $t \in \{1,2,\ldots,T\}$ it is $\bar{\Omega}_{t'}\inv \bar{C}_{v,t'}\tran \bar{C}_{v,t'}\bar{\Omega}_{t'}\inv  \preceq  \bar{\Omega}_{t}\inv \bar{C}_{v,t}\tran \bar{C}_{v,t} \bar{\Omega}_{t}\inv\!\!$, and let $\Phi$ be the matrix $\bar{\Omega}_{t'}\inv \bar{C}_{v,t'}\tran \bar{C}_{v,t'} \bar{\Omega}_{t'}\inv $; similarly, let $l$  be the $\min_{t\in\until{T},u \in \calV}\lambda_\max\inv(I+\bar{C}_{v,t} \Sigma\att{t}{t}(\emptyset) \bar{C}_{v,t}\tran)$. 
Summing ineq.~\eqref{ineq:super_ratio_aux_3} across all times $t \in \until{T}$, and using Lemmata~\ref{lem:traceAB_mon} and~\ref{lem:trace_low_bound_lambda_min},
\begin{align*}
 g(\calS)-g(\calS\cup\{v\})&\geq  l\sum_{t=1}^T \trace{\Theta_t\bar{\Omega}_{t}\inv \bar{C}_{v,t}\tran \bar{C}_{v,t} \bar{\Omega}_{t}\inv }\\
& \geq l\sum_{t=1}^T \trace{\Theta_t \Phi}\\
& =l\trace{\Phi \sum_{t=1}^T \Theta_t }\\
& \geq l \lambda_\min\left(\sum_{t=1}^T \Theta_t \right)\trace{\Phi}\\
&>0,
\end{align*}
which is non-zero because $\sum_{t=1}^T \Theta_t\succ 0$ and $\Phi$ is a non-zero positive semi-definite matrix. 

Finally, we lower bound $\trace{\Phi}$, using Lemma~\ref{lem:trace_low_bound_lambda_min}:
\begin{align}
\trace{\Phi}&=\trace{\bar{\Omega}_{t'}\inv \bar{C}_{v,t'}\tran \bar{C}_{v,t'} \bar{\Omega}_{t'}\inv }\nonumber\\
&=\trace{ \bar{\Omega}_{t'}^{-2} \bar{C}_{v,t'}\tran\bar{C}_{v,t'}}\nonumber\\
&\geq \lambda_\min(\bar{\Omega}_{t'}^{-2}) \trace{\bar{C}_{v,t'}\tran\bar{C}_{v,t'}}\nonumber\\
&= \lambda_\min^2(\bar{\Omega}_{t'}^{-1}) \trace{\bar{C}_{v,t'}\tran\bar{C}_{v,t'}}\nonumber\\
&\geq \lambda_\min^2(\Sigma\att{t'}{t'}(\calV)) \trace{\bar{C}_{v,t'}\tran\bar{C}_{v,t'}},\label{ineq:super_ratio_aux_10}
\end{align}
where ineq.~\eqref{ineq:super_ratio_aux_10} holds because $\bar{\Omega}_{t'}^{-1}\succeq \Sigma\att{t'}{t'}(\calV)$.  In particular, the inequality $\bar{\Omega}_{t'}^{-1}\succeq \Sigma\att{t'}{t'}(\calS\cup \{v\})$ is derived by applying Lemma~\ref{lem:inverse} to the inequality $\bar{\Omega}_{t'}\preceq \bar{\Omega}_{t'}+\bar{C}_{v,t}\tran\bar{C}_{v,t}\tran=\Sigma\inv\att{t'}{t'}(\calS\cup\{v\})$, where the equality holds by the definition of $\bar{\Omega}_{t'}$.  In~addition, due to Lemma~\ref{prop:one-step_monotonicity} it is  $\Sigma\att{1}{1}(\calS\cup \{v\})\succeq \Sigma\att{1}{1}(\calV)$, and as a result, from Corollary~\ref{cor:from_t_to_t} it also is  $\Sigma\att{t'}{t'}(\calS\cup \{v\})\succeq \Sigma\att{t'}{t'}(\calV)$.  Overall, the desired inequality $\bar{\Omega}_{t'}^{-1}\succeq \Sigma\att{t'}{t'}(\calV)$ holds.

\paragraph{Upper bound for the denominator of the supermodularity ratio $\gamma_g$} The denominator of the submodularity ratio~$\gamma_g$ is of the form
\begin{equation*}
\sum_{t=1}^{T}[f_t(\calS')-f_t(\calS'\cup \{v\})],
\end{equation*}
for some sensor set $\calS'\subseteq \calV$, and sensor $v \in \calV$;  to upper bound it,  from~eq.~\eqref{eq:covariance_riccati} in Lemma~\ref{lem:covariance_riccati} of Appendix~A, observe
\begin{equation*}
\Sigma\att{t}{t}(\calS'\cup \{v\})=[\Sigma\att{t}{t-1}\inv(\calS' \cup \{v\})+\sum_{i \in \calS' \cup \{v\}} \bar{C}_{i,t}\tran \bar{C}_{i,t}]\inv\!\!,
\end{equation*}
and let $H_t=\Sigma\att{t}{t-1}\inv(\calS')+\sum_{i \in \calS'}^T\bar{C}_{i,t}\tran \bar{C}_{i,t}$, and $\bar{H}_{t}=\Sigma\att{t}{t-1}\inv(\calS'\cup \{v\})+\sum_{i \in \calS'}^T \bar{C}_{i,t}\tran \bar{C}_{i,t}$; using the Woodbury identity in Lemma~\ref{lem:woodbury},
\begin{align*}
f_t(\calS'\cup \{v\})&=\trace{\Theta_t\bar{H}_{t}\inv}-\\
&\hspace{0.4em}\trace{\Theta_t\bar{H}_{t}\inv \bar{C}_{v,t}\tran (I+\bar{C}_{v,t} \bar{H}_{t}\inv \bar{C}_{v,t}\tran)\inv \bar{C}_{v,t} \bar{H}_{t}\inv}.
\end{align*}
Therefore, 
\begin{align}
&\sum_{t=1}^T [f_t(\calS')-f_t(\calS'\cup \{v\})]=
\nonumber\\
&\sum_{t=1}^T [\trace{\Theta_t H_{t}\inv}-\trace{\Theta_t\bar{H}_{t}\inv}+\nonumber
\end{align}

\begin{align}
&\trace{\Theta_t\bar{H}_{t}\inv \bar{C}_{v,t}\tran (I+\bar{C}_{v,t} \bar{H}_{t}\inv \bar{C}_{v,t}\tran)\inv \bar{C}_{v,t} \bar{H}_{t}\inv}]\leq\nonumber\\
&\sum_{t=1}^T [\trace{\Theta_t H_{t}\inv}+ \nonumber\\
&\trace{\Theta_t\bar{H}_{t}\inv \bar{C}_{v,t}\tran (I+\bar{C}_{v,t} \bar{H}_{t}\inv \bar{C}_{v,t}\tran)\inv \bar{C}_{v,t} \bar{H}_{t}\inv}],\label{ineq:super_ratio_aux_4}
\end{align}
where ineq.~\eqref{ineq:super_ratio_aux_4} holds since $\trace{\Theta_t\bar{H}_{t}\inv}$ is non-negative.  In eq.~\eqref{ineq:super_ratio_aux_4}, the second term in the sum is upper bounded as follows, using Lemma~\ref{lem:trace_low_bound_lambda_min}:
\begin{align}
&\trace{\Theta_t\bar{H}_{t}\inv \bar{C}_{v,t}\tran (I+\bar{C}_{v,t} \bar{H}_{t}\inv \bar{C}_{v,t}\tran)\inv \bar{C}_{v,t} \bar{H}_{t}\inv}= \nonumber\\
&\trace{\bar{C}_{v,t} \bar{H}_{t}\inv\Theta_t\bar{H}_{t}\inv \bar{C}_{v,t}\tran (I+\bar{C}_{v,t} \bar{H}_{t}\inv \bar{C}_{v,t}\tran)\inv}\leq \nonumber\\
&\trace{\bar{C}_{v,t} \bar{H}_{t}\inv\Theta_t\bar{H}_{t}\inv \bar{C}_{v,t}\tran} \lambda_\max[(I+\bar{C}_{v,t} \bar{H}_{t}\inv \bar{C}_{v,t}\tran)\inv]= \nonumber\\
&\trace{\bar{C}_{v,t} \bar{H}_{t}\inv\Theta_t\bar{H}_{t}\inv \bar{C}_{v,t}\tran} \lambda_\min\inv(I+\bar{C}_{v,t} \bar{H}_{t}\inv \bar{C}_{v,t}\tran)\leq \nonumber\\
&\trace{\bar{C}_{v,t} \bar{H}_{t}\inv\Theta_t\bar{H}_{t}\inv \bar{C}_{v,t}\tran} \lambda_\min\inv(I+\bar{C}_{v,t} \Sigma\att{t}{t}(\calV) \bar{C}_{v,t}\tran),\label{ineq:super_ratio_aux_5}
\end{align}
since $\lambda_\min(I+\bar{C}_{v,t} \bar{H}_{t}\inv \bar{C}_{v,t}\tran)\geq \lambda_\min(I+\bar{C}_{v,t} \Sigma\att{t}{t}(\calV) \bar{C}_{v,t}\tran)$, because  $\bar{H}_{t}\inv\succeq \Sigma\att{t}{t}(\calV)$.  In particular, the inequality $\bar{H}_{t}\inv\succeq \Sigma\att{t}{t}(\calV)$ is derived as follows: first, it is $\bar{H}_{t}\preceq \bar{H}_{t}+\bar{C}_{v,t}\tran \bar{C}_{v,t}=\Sigma\att{t}{t}(\calS'\cup \{v\})\inv\!\!,$ where the equality holds by the definition of $\bar{H}_t$, and now Lemma~\ref{lem:inverse} implies $\bar{H}_{t}\inv\succeq \Sigma\att{t}{t}(\calS'\cup \{v\})$.  In~addition, $\Sigma\att{t}{t}(\calS'\cup\{v\})\succeq \Sigma\att{t}{t}(\calV)$ is implied from Corollary~\ref{cor:from_t_to_t}, since Lemma~\ref{prop:one-step_monotonicity} implies $\Sigma\att{1}{1}(\calS'\cup \{v\})\succeq \Sigma\att{1}{1}(\calV)$.  Overall, the desired inequality $\bar{H}_{t}\inv\succeq \Sigma\att{t}{t}(\calV)$ holds.

Let $l'=\max_{t\in\until{T}, v\in \calV}\lambda_\min\inv(I+\bar{C}_{v,t} \Sigma\att{t}{t}(\calV) \bar{C}_{v,t}\tran)$. 
From ineqs.~\eqref{ineq:super_ratio_aux_4} and~\eqref{ineq:super_ratio_aux_5},
\beal\label{ineq:super_ratio_aux_6}
&\sum_{t=1}^T [f_t(\calS')-f_t(\calS'\cup \{v\})]\leq
\\
&\sum_{t=1}^T [\trace{\Theta_t H_{t}\inv }+ l'\trace{\Theta_t\bar{H}_{t}\inv \bar{C}_{v,t}\tran\bar{C}_{v,t} \bar{H}_{t}\inv}].
\eeal
Consider times $t' \in \until{T}$ and $t'' \in \until{T}$ such that for any time $t \in \{1,2,\ldots,T\}$, it is  $H_{t'}\inv  \succeq H_{t}\inv $ and $\bar{H}_{t''}\inv \bar{C}_{v,t''}\tran\bar{C}_{v,t''} \bar{H}_{t''}\inv \succeq \bar{H}_{t}\inv \bar{C}_{v,t}\tran\bar{C}_{v,t} \bar{H}_{t}\inv$, and let $\Xi=H_{t'}\inv $ and $\Phi'=\bar{H}_{t'}\inv \bar{C}_{v,t'}\tran\bar{C}_{v,t'} \bar{H}_{t'}\inv$.  From ineq.~\eqref{ineq:super_ratio_aux_6}, and Lemma~\ref{lem:traceAB_mon},
\begin{align}
&\sum_{t=1}^T [f_t(\calS')-f_t(\calS'\cup \{v\})]\leq
\nonumber\\
&\sum_{t=1}^T [\trace{\Theta_t  \Xi}+ l'\trace{ \Theta_t  \Phi'}]\leq\nonumber\\
&\trace{\Xi\sum_{t=1}^T\Theta_t }+ l'\trace{\Phi'\sum_{t=1}^T \Theta_t }\leq\nonumber \\
&(\trace{\Xi}+l'\trace{\Phi'})\lambda_\max(\sum_{t=1}^T \Theta_t).\label{ineq:super_ratio_aux_7}
\end{align}

Finally, we upper bound $\trace{\Xi}+l'\trace{\Phi'}$ in ineq.~\eqref{ineq:super_ratio_aux_7}, using Lemma~\ref{lem:trace_low_bound_lambda_min}:
\begin{align}
&\trace{\Xi}+l'\trace{\Phi'}\leq \nonumber\\
&\trace{ H_{t'}\inv}+\\
&l'\lambda_\max^2(\bar{H}_{t''}\inv)\trace{\bar{C}_{v,t''}\tran\bar{C}_{v,t''}}\leq\nonumber\\
&\trace{\Sigma\att{t'}{t'}(\emptyset)}+l'\lambda_\max^2(\Sigma\att{t''}{t''}(\emptyset))\trace{\bar{C}_{v,t''}\tran\bar{C}_{v,t''}},\label{ineq:super_ratio_aux_8}
\end{align}
where ineq.~\eqref{ineq:super_ratio_aux_8} holds because $H_{t'}\inv\preceq \Sigma\att{t'}{t'}(\emptyset)$, and similarly, $\bar{H}_{t''}\inv\preceq \Sigma\att{t''}{t''}(\emptyset)$. In particular, the inequality $H_{t'}\inv\preceq \Sigma\att{t'}{t'}(\emptyset)$ is implied as follows: first, by the definition of $H_{t'}$, it is $H_{t'}\inv=\Sigma\att{t'}{t'}(\calS')$; and finally, Corollary~\ref{cor:from_t_to_t} and the fact that $\Sigma\att{1}{1}(\calS')\preceq \Sigma\att{1}{1}(\emptyset)$, which holds due to Lemma~\ref{prop:one-step_monotonicity}, imply $\Sigma\att{t'}{t'}(\calS')\preceq \Sigma\att{t'}{t'}(\emptyset)$.  In addition, the inequality $\bar{H}_{t''}\inv\preceq \Sigma\att{t''}{t''}(\emptyset)$ is implied as follows: first, by the definition of $\bar{H}_{t''}$, it is $\bar{H}_{t''}\succeq \Sigma\inv\att{t''}{t''-1}(\calS'\cup\{v\})$, and as a result, Lemma~\ref{lem:inverse} implies $\bar{H}_{t''}\inv\preceq \Sigma\att{t''}{t''-1}(\calS'\cup\{v\})$.  Moreover, Corollary~\ref{cor:from_t_to_t+1} and the fact that $\Sigma\att{1}{1}(\calS\cup \{v\})\preceq \Sigma\att{1}{1}(\emptyset)$, which holds due to Lemma~\ref{prop:one-step_monotonicity}, imply $\Sigma\att{t''}{t''-1}(\calS'\cup\{v\})\preceq \Sigma\att{t''}{t''-1}(\emptyset)$.  Finally, from eq.~\eqref{eq:covariance_riccati} in Lemma~\ref{lem:covariance_riccati} it is~$\Sigma\att{t''}{t''-1}(\emptyset)=\Sigma\att{t''}{t''}(\emptyset)$.  Overall, the desired inequality $\bar{H}_{t''}\inv\preceq \Sigma\att{t''}{t''}(\emptyset)$ holds.
\end{proof}

\section*{Appendix E: Proof of Theorem~\ref{th:freq}}
\label{sec:contribution}

\begin{mylemma}[System-level condition for near-optimal sensor selection]\label{prop:iff_for_all_zero_control}
	Let $N_1$ be defined as in eq.~\eqref{eq:control_riccati}.  The control policy $u_{1:T}\triangleq(0,0,\ldots, 0)$ is suboptimal for the \LQG problem in eq.~\eqref{pr:perfect_state} for all non-zero initial conditions~$x_1$
	if and only if 
	\beq\label{ineq:iff_for_all_zero_control}
	\textstyle \sum_{t=1}^{T} A_1\tran \cdots A_t\tran Q_t A_t\cdots A_1\succ N_1.
	\eeq
\end{mylemma}

\begin{proof}[Proof of Lemma~\ref{prop:iff_for_all_zero_control}]
	For any initial condition $x_1$, eq.~\eqref{eq:lem:opt_sensors} in Lemma~\ref{lem:LQG_closed} implies for the noiseless perfect state information LQG problem in eq.~\eqref{pr:perfect_state}:
	\begin{equation}\label{eq:it_always_aux_1}
	\min_{u_{1:T}}\sum_{t=1}^{T}\left.[\|x\at{t+1}\|^2_{Q_t} +\|u_{t}(x_t)\|^2_{R_t}]\right|_{\Sigma\att{t}{t}=W_t=0}=x_1\tran N_1 x_1,
	\end{equation}
	since $\mathbb{E}(\|x_1\|^2_{N_1})=x_1\tran N_1x_1$, because $x_1$ is known ($\Sigma\att{1}{1}=0$), and $\Sigma\att{t}{t}$ and $W_t$ are zero.
	In addition, for $u_{1:T}=(0,0,\ldots,0)$, the objective function in the noiseless perfect state information LQG problem in eq.~\eqref{pr:perfect_state} is 
	\beal\label{eq:it_always_aux_2}
	&\sum_{t=1}^{T}\left.[\|x\at{t+1}\|^2_{Q_t} +\|u_{t}(x_t)\|^2_{R_t}]\right|_{\Sigma\att{t}{t}=W_t=0}\\
	&= \sum_{t=1}^{T} x\at{t+1}\tran Q_t x\at{t+1}\\ 
	&=x_1\tran\sum_{t=1}^{T} A_1\tran A_2\tran \cdots A_t\tran Q_t A_tA_{t-1}\cdots A_1 x_1,
	\eeal
	since $x_{t+1}=A_tx_t=A_tA_{t-1}x_{t-1}=\ldots=A_tA_{t-1}\cdots A_1x_1$ when all $u_1, u_2, \ldots, u_T$ are zero.  
	
	From eqs.~\eqref{eq:it_always_aux_1} and~\eqref{eq:it_always_aux_2}, the inequality $$x_1\tran N_1 x_1 < x_1\tran\sum_{t=1}^{T} A_1\tran A_2\tran \cdots A_t\tran Q_t A_tA_{t-1}\cdots A_1 x_1$$ holds for any non-zero $x_1$ if and only if
	\belowdisplayskip =-12pt\begin{equation*}
	N_1 \prec \sum_{t=1}^{T} A_1\tran \cdots A_t\tran Q_t A_tA_{t-1}\cdots A_1.
	\end{equation*}
\end{proof}

\begin{mylemma}\label{lem:theta_formula}
For any $\allT$, $$\Theta_t=A_t\tran S_t A_t+Q_{t-1}-S_{t-1}.$$
\end{mylemma}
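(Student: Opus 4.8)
The plan is to reduce the identity to a single application of the Woodbury matrix-inversion lemma. First I would expand the definition $\Theta_t = K_t\tran M_t K_t$ from eq.~\eqref{eq:control_riccati}. Substituting $K_t = -M_t\inv B_t\tran S_t A_t$ and using that $S_t$ and $M_t = B_t\tran S_t B_t + R_t$ are symmetric, the two factors of $M_t\inv$ cancel against the middle $M_t$, and one obtains $\Theta_t = A_t\tran S_t B_t M_t\inv B_t\tran S_t A_t$. On the other side of the claimed equation, the recursion $S_{t-1} = Q_{t-1} + N_{t}$ in eq.~\eqref{eq:control_riccati} gives $Q_{t-1} - S_{t-1} = -N_t$, so the target identity $\Theta_t = A_t\tran S_t A_t + Q_{t-1} - S_{t-1}$ is equivalent to $\Theta_t = A_t\tran S_t A_t - N_t$, where $N_t = A_t\tran(S_t\inv + B_t R_t\inv B_t\tran)\inv A_t$.

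Next I would factor the common $A_t\tran(\cdot)A_t$ out of both sides, so that the whole statement collapses to the purely linear-algebraic identity $S_t B_t M_t\inv B_t\tran S_t = S_t - (S_t\inv + B_t R_t\inv B_t\tran)\inv$. This is exactly the Woodbury identity of Lemma~\ref{lem:woodbury} applied with $A = S_t\inv$, $U = B_t$, $C = R_t\inv$, and $V = B_t\tran$: indeed $(A + UCV)\inv = (S_t\inv + B_t R_t\inv B_t\tran)\inv$ equals $A\inv - A\inv U(C\inv + V A\inv U)\inv V A\inv = S_t - S_t B_t(R_t + B_t\tran S_t B_t)\inv B_t\tran S_t = S_t - S_t B_t M_t\inv B_t\tran S_t$, where I used $A\inv = S_t$, $C\inv = R_t$, and the definition $M_t = B_t\tran S_t B_t + R_t$. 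Rearranging yields the required equality, which completes the reduction and hence the proof.

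The main thing to verify carefully is that the Woodbury hypotheses hold, i.e., that $S_t$, $R_t$, and $S_t\inv + B_t R_t\inv B_t\tran$ are all invertible. Invertibility of $R_t$ is immediate since $R_t \succ 0$ by assumption in Problem~\ref{prob:LQG}; invertibility of $S_t$ is implicit already in eq.~\eqref{eq:control_riccati}, as the term $S_t\inv$ appears in the very definition of $N_t$; and the sum $S_t\inv + B_t R_t\inv B_t\tran$ is then a positive definite matrix plus a positive semidefinite one, hence invertible. I would also record the boundary convention: for $t = 1$ the substitution $Q_{t-1} - S_{t-1} = -N_t$ is read through the recursion $S_0 = Q_0 + N_1$, so that in all cases the lemma is equivalent to the clean form $\Theta_t = A_t\tran S_t A_t - N_t$, which is the version actually invoked in the sequel. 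I expect no real obstacle beyond this bookkeeping, since the entire lemma is an algebraic consequence of eq.~\eqref{eq:control_riccati} and the inversion lemma.
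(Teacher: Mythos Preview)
Your proposal is correct and follows essentially the same route as the paper: both apply the Woodbury identity (Lemma~\ref{lem:woodbury}) to $(S_t\inv + B_t R_t\inv B_t\tran)\inv$ to obtain $N_t = A_t\tran S_t A_t - \Theta_t$, and then invoke the Riccati recursion $S_{t-1} = Q_{t-1} + N_t$ to rewrite $-N_t$ as $Q_{t-1} - S_{t-1}$. Your version is simply more explicit about expanding $\Theta_t = K_t\tran M_t K_t$ first and about checking the invertibility hypotheses, and you correctly note that the boundary case $t=1$ is handled by the equivalent form $\Theta_t = A_t\tran S_t A_t - N_t$.
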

\begin{proof}[Proof of Lemma~\ref{lem:theta_formula}]
Using the Woobury identity in Lemma~\ref{lem:woodbury}, and the notation in eq.~\eqref{eq:control_riccati},
\begin{align*}
N_t &= A_t\tran (S_t\inv+B_tR_t\inv B_t\tran)\inv A_t \\
&= A_t\tran (S_t - S_t B_t M_t\inv B_t\tran S_t) A_t\\
&=A_t\tran S_t A_t-\Theta_t.
\end{align*}
The latter, gives $\Theta_t=A_t\tran S_t A_t-N_t$.  In addition, from eq.~\eqref{eq:control_riccati},  $-N_t=Q_{t-1}-S_{t-1}$, since $S_t=Q_t+N_{t+1}$.
\end{proof}

\begin{mylemma}\label{lem:observability_condition}
$\sum_{t=1}^{T} A_1\tran A_2\tran \cdots A_t\tran Q_t A_tA_{t-1}\cdots A_1\succ N_1$ if and only if 
\begin{equation*}
\sum_{t=1}^{T}A\tran_{1}A\tran_2\cdots A\tran_{t-1} \Theta_t A_{t-1}A_{t-2}\cdots A_1 \succ 0.
\end{equation*}
\end{mylemma}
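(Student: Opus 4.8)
The plan is to prove the sharper identity that the two quadratic forms appearing in the lemma differ by exactly $N_1$, from which the claimed equivalence is immediate. To streamline the bookkeeping, write $\Phi_t \triangleq A_t A_{t-1}\cdots A_1$ for $t\geq 1$ and $\Phi_0 \triangleq I$, so that $\Phi_t = A_t \Phi_{t-1}$ and $\Phi_t\tran = A_1\tran\cdots A_t\tran$. In this notation the left-hand sum of the lemma is $\sum_{t=1}^T \Phi_t\tran Q_t \Phi_t$, while the right-hand sum is $\sum_{t=1}^T \Phi_{t-1}\tran \Theta_t \Phi_{t-1}$.

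First I would recall the intermediate identity $\Theta_t = A_t\tran S_t A_t - N_t$ established inside the proof of Lemma~\ref{lem:theta_formula}, and combine it with the recursion $S_t = Q_t + N_{t+1}$ from eq.~\eqref{eq:control_riccati}. Conjugating by $\Phi_{t-1}$ and using $A_t\Phi_{t-1} = \Phi_t$ yields, for each $t$,
\begin{equation*}
\Phi_{t-1}\tran \Theta_t \Phi_{t-1} = \Phi_t\tran S_t \Phi_t - \Phi_{t-1}\tran N_t \Phi_{t-1} = \Phi_t\tran Q_t \Phi_t + \Phi_t\tran N_{t+1}\Phi_t - \Phi_{t-1}\tran N_t\Phi_{t-1}.
\end{equation*}

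The key observation is that, on setting $g_t \triangleq \Phi_{t-1}\tran N_t \Phi_{t-1}$, the last two terms are precisely $g_{t+1} - g_t$, so summing over $t = 1,\ldots,T$ telescopes:
\begin{equation*}
\sum_{t=1}^T \Phi_{t-1}\tran \Theta_t \Phi_{t-1} = \sum_{t=1}^T \Phi_t\tran Q_t \Phi_t + g_{T+1} - g_1.
\end{equation*}
The boundary condition $N_{T+1}=0$ gives $g_{T+1}=0$, and $\Phi_0 = I$ gives $g_1 = N_1$; hence the right-hand sum equals $\sum_{t=1}^T \Phi_t\tran Q_t \Phi_t - N_1$. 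Adding $N_1$ to both sides, the right-hand sum is positive definite if and only if $\sum_{t=1}^T \Phi_t\tran Q_t \Phi_t \succ N_1$, which is exactly the stated equivalence.

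The argument is essentially a bookkeeping exercise, so I do not anticipate a genuine obstacle; the only nontrivial step is spotting the telescoping structure, which hinges on pairing $\Theta_t = A_t\tran S_t A_t - N_t$ with $S_t = Q_t + N_{t+1}$ so that the $A_t\tran S_t A_t$ term, conjugated by $\Phi_{t-1}$, becomes $\Phi_t\tran S_t \Phi_t = \Phi_t\tran Q_t \Phi_t + g_{t+1}$ and cancels against the $g_t$ contributed at the next index. The one point to watch is the indexing at the endpoints $t=1$ and $t=T$: the form $\Theta_t = A_t\tran S_t A_t + Q_{t-1} - S_{t-1}$ in the statement of Lemma~\ref{lem:theta_formula} would introduce the undefined quantities $Q_0$ and $S_0$ at $t=1$, so I would deliberately work with the $N_t$ form instead, since $N_t$ is well defined for every $t = 1,\ldots,T+1$ and makes the telescoping and its boundary terms unambiguous.
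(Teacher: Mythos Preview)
Your proof is correct and follows essentially the same route as the paper: both establish the identity $\sum_{t=1}^{T}\Phi_{t-1}\tran \Theta_t \Phi_{t-1} = \sum_{t=1}^{T}\Phi_t\tran Q_t \Phi_t - N_1$ from Lemma~\ref{lem:theta_formula} and the recursion $S_t = Q_t + N_{t+1}$, then read off the equivalence. Your presentation is somewhat cleaner, making the telescoping explicit via $g_t = \Phi_{t-1}\tran N_t \Phi_{t-1}$ and using the $N_t$-form of $\Theta_t$ to avoid the boundary awkwardness with $Q_0,S_0$, whereas the paper carries out the same cancellation by iterated substitution using $\Theta_t = A_t\tran S_t A_t + Q_{t-1} - S_{t-1}$.
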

\begin{proof}[Proof of Lemma~\ref{lem:observability_condition}]
For $i= t-1, t-2, \ldots, 1$, we pre- and post-multiply the identity in Lemma~\ref{lem:theta_formula} with $A_i\tran$ and $A_i$, respectively:
\beal
&\Theta_t=A_t\tran S_t A_t+Q_{t-1}-S_{t-1}\Rightarrow\\
& A_{t-1}\tran\Theta_t A_{t-1}=A_{t-1}\tran A_t\tran S_t A_t A_{t-1}+A_{t-1}\tran Q_{t-1}A_{t-1}-\\
& \qquad A_{t-1}\tran S_{t-1}A_{t-1}\Rightarrow\\
& A_{t-1}\tran\Theta_t A_{t-1}=A_{t-1}\tran A_t\tran S_t A_t A_{t-1}+A_{t-1}\tran Q_{t-1}A_{t-1}-\\
& \qquad \Theta_{t-1}+Q_{t-2}-S_{t-2}\Rightarrow\\
& \Theta_{t-1}+ A_{t-1}\tran\Theta_t A_{t-1}=A_{t-1}\tran A_t\tran S_t A_t A_{t-1}+\\
&\qquad A_{t-1}\tran Q_{t-1}A_{t-1}+Q_{t-2}-S_{t-2}\Rightarrow\\
&\ldots\Rightarrow\\
& \Theta_2+ A_2\tran\Theta_3 A_2+\ldots+A_2\tran \cdots A_{t-1}\tran\Theta_t A_{t-1}\cdots A_2=\\
& A_2\tran \cdots A_t\tran S_t A_t \cdots A_2+A_2\tran \cdots A_{t-1}\tran Q_{t-1}A_{t-1}\cdots A_2+\\
&\qquad \ldots+ A_2\tran Q_2 A_2+ Q_{1}-S_{1}\Rightarrow\\
& \Theta_1+ A_1\tran\Theta_2 A_1+\ldots+A_1\tran \cdots A_{t-1}\tran\Theta_t A_{t-1}\cdots A_1=\\
& A_1\tran \cdots A_t\tran S_t A_t \cdots A_1+A_1\tran \cdots A_{t-1}\tran Q_{t-1}A_{t-1}\cdots A_1+\\
&\qquad\ldots+ A_1\tran Q_1 A_1-N_1\Rightarrow\\
&\sum_{t=1}^{T}A\tran_{1}\cdots A\tran_{t-1} \Theta_t A_{t-1}\cdots A_1 =\\
&\qquad\sum_{t=1}^{T} A_1\tran \cdots A_t\tran Q_t A_t\cdots A_1-N_1.\label{lem:obser_aux_1}
\eeal
The last equality in eq.~\eqref{lem:obser_aux_1} implies Lemma~\ref{lem:observability_condition}.
\end{proof}

\begin{mylemma}\label{lem:sum_theta}
Consider for any $\allT$ that $A_t$ is invertible.
$\sum_{t=1}^{T}A\tran_{1}A\tran_2\cdots A\tran_{t-1} \Theta_t A_{t-1}A_{t-2}\cdots A_1\succ 0$ if and only if  
$$\sum_{t=1}^{T} \Theta_t \succ 0.$$
\end{mylemma}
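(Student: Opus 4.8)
The plan is to prove both implications by reducing each positive-definiteness statement to a statement about common null spaces of positive semi-definite matrices. First I would record that every $\Theta_t$ is positive semi-definite: since $\Theta_t = K_t\tran M_t K_t$ with $M_t = B_t\tran S_t B_t + R_t \succ 0$ (as $R_t \succ 0$ and a backward induction from $N_{T+1}=0$ gives $S_t \succeq 0$), each summand on both sides of the claimed equivalence is positive semi-definite, and hence so are the two sums. For any sum of positive semi-definite matrices $\sum_t P_t$, positive definiteness is equivalent to triviality of the common kernel $\bigcap_t \ker P_t = \{0\}$, because $v\tran (\sum_t P_t) v = \sum_t v\tran P_t v$ is a sum of non-negative terms and vanishes iff $v \in \ker P_t$ for every $t$.

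Next I would exploit the invertibility of the $A_t$. Writing $\Phi_t \triangleq A_{t-1}A_{t-2}\cdots A_1$ (with $\Phi_1 = I$), each term of the left-hand sum is the congruence $\Phi_t\tran \Theta_t \Phi_t$, and since $\Phi_t$ is invertible, $v\tran \Phi_t\tran \Theta_t \Phi_t v = (\Phi_t v)\tran \Theta_t (\Phi_t v)$ vanishes iff $\Phi_t v \in \ker \Theta_t$, i.e. $\ker(\Phi_t\tran \Theta_t \Phi_t) = \Phi_t\inv \ker \Theta_t$. Thus the left-hand side is positive definite iff $\bigcap_{t=1}^T \Phi_t\inv \ker \Theta_t = \{0\}$, while the right-hand side is positive definite iff $\bigcap_{t=1}^T \ker \Theta_t = \{0\}$. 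The lemma therefore reduces to showing that these two intersections are simultaneously trivial.

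The main obstacle is precisely this last step: a per-term invertible congruence is not enough on its own, since the transformations $\Phi_t$ differ across $t$ and could in principle carry the individual kernels onto a shared nonzero direction, decoupling the two conditions. To control this I would try to use the extra structure of the $\Theta_t$ coming from the Riccati recursion, namely $\Theta_t = A_t\tran G_t A_t$ with $G_t \triangleq S_t B_t M_t\inv B_t\tran S_t \succeq 0$ (obtained by substituting $K_t=-M_t\inv B_t\tran S_t A_t$ into $\Theta_t = K_t\tran M_t K_t$), which gives the compatibility relation $\Phi_t\tran \Theta_t \Phi_t = \Phi_{t+1}\tran G_t \Phi_{t+1}$. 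I would then run an induction on $T$ through the backward accumulation $R_T = \Theta_T$, $R_t = \Theta_t + A_t\tran R_{t+1} A_t$, whose value at $t=1$ equals the left-hand sum, together with the kernel identity $\ker R_t = \ker \Theta_t \cap A_t\inv \ker R_{t+1}$. The inductive step, in which one must relate the kernel of the accumulated matrix to the common kernel of the plain $\Theta_t$, is where the invertibility hypothesis on the $A_t$ has to be combined with the monotonicity of the recursion, and is the step I expect to require the most care.
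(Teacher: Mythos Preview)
Your reduction to common kernels is correct and is a genuinely different route from the paper. The paper attacks each direction by writing $z\tran\Theta_t z=\trace{\phi_t\phi_t\tran\,U_t\tran\Theta_t U_t}$ with $U_t=A_{t-1}\cdots A_1$ and $\phi_t=U_t\inv z$ (and the symmetric manipulation for the other direction), then claims there is a $t'$ with $\phi_{t'}\phi_{t'}\tran\preceq\phi_t\phi_t\tran$ for all $t$, so that the sum is bounded below by $\lambda_{\min}\bigl(\sum_t U_t\tran\Theta_t U_t\bigr)\|\phi_{t'}\|_2^2$. That ordering step is exactly where your concern bites: two rank-one positive semi-definite matrices $uu\tran$ and $vv\tran$ are Loewner-comparable only when $u$ and $v$ are parallel, so in general no such $t'$ exists. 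In that sense your kernel viewpoint is more candid about where the difficulty lives than the paper's argument.

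That said, your proposal has a genuine gap at the step you yourself flag. You reduce the equivalence to $\bigcap_t\Phi_t\inv\ker\Theta_t=\{0\}\Longleftrightarrow\bigcap_t\ker\Theta_t=\{0\}$, and as you anticipate this fails for generic positive semi-definite $\Theta_t$: with $T=2$, $\Theta_1=\Theta_2=\diag{1,0}$ and $A_1$ the $2\times2$ permutation matrix, one has $\Theta_1+A_1\tran\Theta_2 A_1=\eye_2\succ 0$ while $\Theta_1+\Theta_2=\diag{2,0}$ is singular. So any valid argument must use the Riccati structure, not merely the invertibility of the $A_t$. Your observation $\Theta_t=A_t\tran G_t A_t$ and the identity $\Phi_t\tran\Theta_t\Phi_t=\Phi_{t+1}\tran G_t\Phi_{t+1}$ are correct but do not by themselves relate $\ker\Theta_t$ to $\ker\Theta_{t+1}$; the induction through $R_t=\Theta_t+A_t\tran R_{t+1}A_t$ gives $\ker R_t=\ker\Theta_t\cap A_t\inv\ker R_{t+1}$, but to close the loop you would still need a coupling such as $A_t(\ker\Theta_t)\subseteq\ker\Theta_{t+1}$ (or its reverse), and nothing in your outline derives that from the recursion. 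Until that coupling is made explicit, the proof is incomplete.
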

\begin{proof}[Proof of Lemma~\ref{lem:sum_theta}] Let $U_t= A_{t-1}A_{t-2}\cdots A_1$.

We first prove that for any non-zero vector $z$, if it is $\sum_{t=1}^{T}A\tran_{1}A\tran_2\cdots A\tran_{t-1} \Theta_t A_{t-1}A_{t-2}\cdots A_1\succ 0$, then $\sum_{t=1}^{T} z\tran\Theta_t z> 0$.  In particular, since $U_t$ is invertible, ---because for any $t\in \{1,2,\ldots, T\}$, $A_t$ is,---
\beal\label{eq:lem_sum_theta_aux_1}
\sum_{t=1}^{T} z\tran\Theta_t z&=\sum_{t=1}^{T} z\tran U_t^{-\top}U_t\tran\Theta_t U_t U_t\inv z\\
&=\sum_{t=1}^T\trace{\phi_t \phi_t\tran U_t\tran\Theta_t U_t},
\eeal
where we let $\phi_t=U_t\inv z$.  Consider a time $t'$ such that for any time $t\in \{1,2\ldots, T\}$, $\phi_{t'} \phi_{t'}\tran\preceq \phi_t \phi_t\tran$.  From eq.~\eqref{eq:lem_sum_theta_aux_1}, using Lemmata~\ref{lem:traceAB_mon} and~\ref{lem:trace_low_bound_lambda_min},
\begin{align*}
\sum_{t=1}^{T} z\tran\Theta_t z&\geq\sum_{t=1}^T\trace{\phi_{t'} \phi_{t'}\tran U_t\tran\Theta_t U_t}\\
&\geq\trace{\phi_{t'} \phi_{t'}\tran \sum_{t=1}^TU_t\tran\Theta_t U_t}\\
&\geq\trace{\phi_{t'} \phi_{t'}\tran}\lambda_\min(\sum_{t=1}^TU_t\tran\Theta_t U_t)\\
&=\|\phi_{t'}\|_2^2 \lambda_\min(\sum_{t=1}^TU_t\tran\Theta_t U_t)\\
&>0.
\end{align*}

We finally prove that for any non-zero vector $z$, if  $\sum_{t=1}^{T} \Theta_t \succ 0$, then $\sum_{t=1}^{T}z A\tran_{1}\cdots A\tran_{t-1} \Theta_t A_{t-1}\cdots A_1z\succ 0$.  In particular,
\begin{align}\label{eq:lem_sum_theta_aux_2}
\sum_{t=1}^{T} z\tran U_t\tran\Theta_t U_t z&=\sum_{t=1}^T\trace{ \xi_{t}\tran \Theta_t \xi_{t}},
\end{align}
where we let $\xi_t=U_tz$.  Consider time $t'$ such that for any time $t\in \{1,2\ldots, T\}$, $\xi_{t'} \xi_{t'}\tran\preceq \xi_t \xi_t\tran$.  From eq.~\eqref{eq:lem_sum_theta_aux_1}, using Lemmata~\ref{lem:traceAB_mon} and~\ref{lem:trace_low_bound_lambda_min},
\belowdisplayskip =-12pt\begin{align*}
\sum_{t=1}^T\trace{ \xi_{t}\tran \Theta_t \xi_{t}}&\geq\trace{\xi_{t'} \xi_{t'}\tran \sum_{t=1}^T\Theta_t }\\
&\geq\trace{\xi_{t'} \xi_{t'}\tran}\lambda_\min(\sum_{t=1}^T\Theta_t)\\
&=\|\xi_{t'}\|_2^2 \lambda_\min(\sum_{t=1}^T\Theta_t )\\
&>0.
\end{align*}
\end{proof}

\begin{proof}[Proof of Theorem~\ref{th:freq}]
Theorem~\ref{th:freq} follows from the sequential application of Lemmata~\ref{prop:iff_for_all_zero_control},~\ref{lem:observability_condition}, and~\ref{lem:sum_theta}.
\end{proof}

\bibliographystyle{IEEEtran}
\bibliography{references}

\end{document}